\documentclass[a4paper,12pt, reqno]{amsart}
\usepackage[utf8]{inputenc}
\usepackage{amssymb,amsmath}
\usepackage{amscd}
\usepackage[all]{xy}
\usepackage[onehalfspacing]{setspace}
\usepackage{graphicx}
\usepackage{url}  
\usepackage[a4paper, left=2cm, right=2cm, top=3.5cm, bottom=3cm]{geometry}
\usepackage{tikz-cd}
\usepackage{array}
\usepackage{verbatim}

\def\A{\mathbb A}

\def\P{\mathbb P}

\def\Q{\mathbb Q}

\def\Z{\mathbb Z}
\def\AA{\mathcal A}
\def\BB{\mathcal B}
\def\CC{\mathcal C}
\def\DD{\mathcal D}
\def\EE{\mathcal E}
\def\FF{\mathcal F}

\def\HH{\mathcal H}

\def\OO{\mathcal O}
\def\PP{\mathcal P}
\def\RR{\mathcal R}
\def\TT{\mathcal T}

\def\Hom{{\operatorname{Hom}}}
\def\RHom{{\operatorname{RHom}}}

\def\Ext{{\operatorname{Ext}}}

\def\Spec{{\operatorname{Spec}}}

\def\Cl{{\operatorname{Cl}}}
\def\ord{{\operatorname{ord}}}
\def\Pic{{\operatorname{Pic}}}

\def\Bl{{\operatorname{Bl}}}

\def\Gr{{\operatorname{gr}}}

\def\Qcoh{{\operatorname{Qcoh}}}

\def\Sing{{\operatorname{Sing}}}
\def\End{{\operatorname{End}}}

\def\MCM{{\operatorname{MCM}}}

\def\rk{{\operatorname{rk}}}

\def\Mod{{\operatorname{Mod}}}
\def\mod{{\operatorname{mod}}}

\def\wt{\widetilde}
\def\wh{\widehat}
\def\ol{\overline}
\def\ul{\underline}

\def\Dsg{\DD^{\mathrm{sg}}}

\def\Db{\DD^b}

\def\Dperf{\DD^{\mathrm{perf}}}
\def\Perf{\DD^{\mathrm{perf}}}

\def\Ksg{{\mathrm K}^{sg}}
\def\KKsg{{\mathbb K}^{sg}}
\def\Kt{{\mathrm{K}}}
\def\Gt{{\mathrm{G}}}
\def\br{{\mathrm{br}}}
\def\Br{{\mathrm{Br}}}
\DeclareMathOperator{\injdim}{\mathsf{inj.dim}}

\def\on{\text{ on }}

\theoremstyle{plain}
\newtheorem{dummy}{dummy}[section]
\newtheorem{conjecture}[dummy]{Conjecture}
\newtheorem{theorem}[dummy]{Theorem}
\newtheorem{proposition}[dummy]{Proposition}
\newtheorem{lemma}[dummy]{Lemma}
\newtheorem{corollary}[dummy]{Corollary}
\newtheorem{example}[dummy]{Example}
\newtheorem{application}[dummy]{Application}
\newtheorem{definition}[dummy]{Definition}
\newtheorem{remark}[dummy]{Remark}
\numberwithin{equation}{section}

\def\bal{\begin{aligned}}
\def\eal{\end{aligned}}

\newcommand{\red}[1]{\leavevmode{\color{red}{#1}}}
\newcommand{\blue}[1]{\leavevmode{\color{blue}{#1}}}

\newcolumntype{P}[1]{>{\centering\arraybackslash}p{#1}}
\newcolumntype{M}[1]{>{\centering\arraybackslash}m{#1}}

\title[Obstructions to semiorthogonal decompositions for 3-folds I]
{Obstructions to semiorthogonal decompositions for singular threefolds I: $\Kt$-theory}
\author{Martin Kalck, Nebojsa Pavic and Evgeny Shinder}

\begin{document}

\begin{abstract}
We investigate necessary conditions for Gorenstein projective varieties
to admit semiorthogonal decompositions introduced by Kawamata, with main
emphasis on threefolds
with isolated compound $A_n$
singularities.
We introduce obstructions coming from Algebraic $\Kt$-theory and
translate them into the concept of maximal nonfactoriality.

Using these obstructions we 
show that many classes of nodal threefolds do not admit Kawamata
type semiorthogonal decompositions. These include
nodal hypersurfaces and double solids, with the exception of
a nodal quadric, and del Pezzo threefolds of degrees $1 \le d \le 4$ with maximal class group rank.

We also investigate when does a blow up of a smooth threefold
in a singular curve admit a Kawamata type semiorthogonal decomposition and we give a complete answer to this question when the curve is nodal and has only rational components.
\end{abstract}

\maketitle

\section{Introduction}

Semiorthogonal decompositions for derived categories of singular projective algebraic
varieties have recently began to be extensively studied.
One important type of such semiorthogonal decomposition is
\begin{equation}\label{eq:dec}
\Db(X) = \langle \Db(R_1), \dots, \Db(R_m) \rangle
\end{equation}
where $X/k$ is a projective variety and all $R_i$'s are
finite-dimensional $k$-algebras. 
One can think of \eqref{eq:dec} as a generalization of a full exceptional
collection which is the case when all $R_i = k$.

A typical construction of \eqref{eq:dec} proceeds through constructing
a full exceptional collection on a resolution of singularities
$\pi: \wt{X} \to X$ and pushing
it forward to $X$.
Burban has constructed decompositions \eqref{eq:dec}
for nodal chains of rational curves \cite{burban}, 
while Kawamata \cite{kawamata1},
Kuznetsov \cite{kuznetsov-sextics}
and Karmazyn-Kuznetsov-Shinder \cite{karmazyn-kuznetsov-shinder}
studied rational surfaces
with isolated rational singularities;
the exhaustive
answer for toric surfaces
is given in \cite{karmazyn-kuznetsov-shinder}.
Finally Kawamata \cite{kawamata1, kawamata2} 
has also studied two examples of Fano threefolds with a single ordinary double point
which admit decomposition \eqref{eq:dec}.
These examples are the nodal quadric threefold and a blow
up of $\P^3$
in two points followed
by contraction of the proper preimage of a line
passing through the two
points (this variety
can be also described
as a nodal linear section of
a Segre embedding $\P^2 \times \P^2 \subset \P^8$), see
Example \ref{example:Kawamata}.

In this paper we investigate necessary conditions for \eqref{eq:dec} to hold
on a Gorenstein projective variety $X$. In fact we allow 
more general decompositions
\begin{equation}\label{eq:dec2}
\Db(X) = \langle \AA, \Db(R_1), \dots, \Db(R_m) \rangle
\end{equation}
where $\AA \subset \Dperf(X)$ 
and which we call \emph{Kawamata type
semiorthogonal decompositions}
(because it is similar to what Kawamata has studied in \cite{kawamata2}).
Here again the $R_i$'s are finite-dimensional
$k$-algebras.
We assume that semiorthogonal decompositions
we consider are admissible; 
if $m=1$
the latter condition 
is automatic, see Proposition \ref{prop:admiss}.
We think of \eqref{eq:dec2} as a splitting of the derived
category into its ``nonsingular part" $\AA$ and the algebras $R_i$ which
carry
information about the singular
points of $X$.

We concentrate on obstructions coming from Algebraic $\Kt$-theory, 
namely on the negative $\Kt_{-1}(X)$ group.
The latter group is a part of the package of the
Thomason-Trobaugh K-theory machinery,
and the negative $\Kt$-groups including $\Kt_{-1}(X)$ have been extensively 
studied, in particular in the seminal work of Weibel \cite{weibel-surfaces}.

After recalling some preliminary results on
semiorthogonal decompositions and saturatedness in the singular setting,
Orlov's singularity category and various $\Kt$-theory groups
in Section \ref{sec:prelim}, 
in Section \ref{sec:K-theory} 
we translate vanishing of $\Kt_{-1}$ into geometric properties of $X$.
This has already been done by Weibel for curves and surfaces \cite{weibel-surfaces},
and our study concentrates on isolated threefold singularities, while
reproving some of Weibel's results
for curves and surfaces along the way.
This relies on previous joint work of the second
and third authors \cite{pavic-shinder}, where $\Kt$-theory of
Orlov's singularity category is studied.
We recall geometric description of
$\Kt_{-1}$ for curves in Proposition \ref{prop:K_-1-curve}
and Corollary \ref{cor:nodal}
and $\Kt_{-1}$ for surfaces
can be computed using Proposition \ref{prop:Class-groups-singularity}.

In general we show that 
vanishing of $\Kt_{-1}(X)$
implies that $X$  
\emph{has enough Weil divisors},
see Definition \ref{def:mnf} and Proposition \ref{prop:max-nonfact},
and that for certain types
of singularities,
including three-dimensional
compound $A_n$ singularities
vanishing of $\Kt_{-1}(X)$
is equivalent to $X$ 
having enough Weil divisors
(Corollary \ref{cor:K_{-1}}).

Informally, 
having enough Weil divisors
means 
having as many Weil non-Cartier
divisors as the local class groups allow.
In particular, in the nodal threefold
case
each local class group
is isomorphic to $\Z$, and
the condition of having enough Weil divisors
means that they separate
singularities, that is for every ordinary
double point $p \in X$
there exists a Weil divisor which
generates the local class group at $p$
and is Cartier at all other nodes.
This is stronger than just being non-factorial
which only requires existence of a Weil divisor which is non-Cartier.
We call nodal threefolds
which have enough Weil divisors
\emph{maximally nonfactorial}.


More generally we relate $\Kt_{-1}(X)$ to the so-called
\emph{defect} of $X$, that is the codimension
of $\Pic(X)$ in $\Cl(X)$, see Definition \ref{def:defect} and Corollary \ref{cor:K_{-1}}.
It follows that in the language of defect, maximal nonfactoriality for nodal
threefolds implies that
defect is equal to the number of singular points,
which is the maximal value the defect can take.

In Section \ref{sec:Kawamata} we show that existence of a decomposition \eqref{eq:dec2}
implies that $\Kt_{-1}(X) = 0$, see Corollary \ref{cor:K_{-1}-obstruction}.
This is obtained by passing to Orlov's
singularity category in \eqref{eq:dec2},
and using idempotent completeness
of the singularity category of a finite-dimensional
algebra.

Combining the results explained so far
we can state our main result as follows:
\begin{theorem}[Proposition \ref{prop:max-nonfact}
and
Corollary \ref{cor:K_{-1}-obstruction}]
If a normal 
Gorenstein projective variety $X$
has a Kawamata type decomposition
\eqref{eq:dec2},
then $\Kt_{-1}(X) = 0$.
If in addition $X$
has isolated singularities,
then $X$ has enough Weil divisors.
\end{theorem}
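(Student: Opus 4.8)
The plan is to prove the two assertions in turn, since they rest on logically independent inputs: the first is a K-theoretic consequence of the mere existence of the decomposition, while the second is a geometric reinterpretation of the vanishing of $\Kt_{-1}$. For the first assertion I would pass from $\Db$ to Orlov's singularity category $\Dsg(X) = \Db(X)/\Dperf(X)$. Applying the Verdier quotient by perfect complexes to the decomposition \eqref{eq:dec2}, the term $\AA$ becomes zero because $\AA \subset \Dperf(X)$, while each $\Db(R_i)$ maps to its own singularity category $\Dsg(R_i)$; the semiorthogonality relations survive the localization, so I expect a semiorthogonal decomposition
$$\Dsg(X) = \langle \Dsg(R_1), \dots, \Dsg(R_m) \rangle.$$
Each $R_i$ is a finite-dimensional $k$-algebra, whose singularity category is Krull--Schmidt and hence idempotent complete. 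Since a semiorthogonal decomposition all of whose components are idempotent complete is again idempotent complete (idempotents split componentwise), $\Dsg(X)$ is idempotent complete, i.e. $\Dsg(X) = \Dsgc(X)$.

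I would then invoke the comparison between the negative K-theory of the scheme and the idempotent completion of its singularity category from \cite{pavic-shinder}: for a Gorenstein projective $X$ the group $\Kt_{-1}(X)$ is identified with the obstruction to idempotent completeness of $\Dsg(X)$, concretely with the cokernel of $\Kt_0(\Dsg(X)) \to \Kt_0(\Dsgc(X))$. As $\Dsg(X)$ is already idempotent complete this cokernel vanishes, giving $\Kt_{-1}(X) = 0$. This settles the first assertion under the normal Gorenstein projective hypotheses alone.

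For the second assertion I assume $\Kt_{-1}(X) = 0$ together with isolated singularities and translate $\Kt_{-1}$ into divisor-class data using Section \ref{sec:K-theory}. Because the singularities are isolated, the negative K-theory is concentrated at the singular points and is assembled, via a localization (Mayer--Vietoris/Bass) sequence, from the local contributions $\Kt_{-1}(\OO_{X,p})$, which for these normal Gorenstein singularities are governed by the local class groups $\Cl(\OO_{X,p})$. The target I would aim for is exactness of
$$\Cl(X) \longrightarrow \bigoplus_{p \in \Sing X} \Cl(\OO_{X,p}) \longrightarrow \Kt_{-1}(X),$$
so that the cokernel of the natural comparison map $\Cl(X) \to \bigoplus_p \Cl(\OO_{X,p})$ injects into $\Kt_{-1}(X)$. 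Vanishing of $\Kt_{-1}(X)$ then forces this comparison map to be surjective, and by Definition \ref{def:mnf} this surjectivity is exactly the statement that $X$ has enough Weil divisors: for each $p$ one finds a Weil divisor generating $\Cl(\OO_{X,p})$ and Cartier at the remaining singular points.

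The main obstacle is the K-theoretic bookkeeping at the $\Kt_0/\Kt_{-1}$ boundary in each half. In the first part the nontrivial point is the identification of $\Kt_{-1}(X)$ with the idempotent-completion defect of $\Dsg(X)$, which relies on Bass delooping of the singularity category and its comparison with Thomason--Trobaugh K-theory of $X$; I would import this as the input from \cite{pavic-shinder}. In the second part the delicate step is the local-to-global gluing: verifying that the map $\bigoplus_p \Cl(\OO_{X,p}) \to \Kt_{-1}(X)$ realizes the full cokernel of the comparison map and that no further terms (from higher K-groups or from the smooth locus) obstruct the conclusion. Normality and the Gorenstein hypothesis enter precisely here, to control the local class groups and to keep $\Kt_{-1}$ supported at the finitely many singular points.
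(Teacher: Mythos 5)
Your treatment of the first assertion is essentially the paper's own proof (Theorem \ref{thm:kawamata-sod} 3) followed by Corollary \ref{cor:K_{-1}-obstruction}): pass to the Verdier quotient, get $\Dsg(X)=\langle \Dsg(R_1),\dots,\Dsg(R_m)\rangle$, use idempotent completeness of each $\Dsg(R_i)$ and of semiorthogonal summands, then conclude via the sequence \eqref{SES} of Lemma \ref{lem:SES_K_{-1}}. Two points you gloss over are worth naming: (a) to know that the image of $\BB_j$ in the quotient is $\Dsg(R_j)$ you must identify $\BB_j\cap\Dperf(X)$ with $\Dperf(R_j)$ under $\BB_j\simeq\Db(R_j)$; this is exactly where Orlov's intrinsic characterization of perfect complexes as homologically finite objects enters (it is the content of Theorem \ref{thm:kawamata-sod} 1)); (b) your justification that $\Dsg(R_i)$ is idempotent complete because it is Krull--Schmidt is only clear when $R_i$ is Gorenstein (then $\Dsg(R_i)\simeq\ul{\MCM}(R_i)$ is $\Hom$-finite); for a general finite-dimensional algebra $\Dsg(R)$ need not be $\Hom$-finite, so one should either cite Chen's general result (Lemma \ref{lem:algebras-complete}) or first prove, as the paper does, that the $R_j$ in a Kawamata decomposition are automatically Gorenstein. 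Both are citation-level repairs.

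The second assertion has a genuine gap. The exact sequence you aim for --- that the cokernel of $\Cl(X)\to\bigoplus_p\Cl(\wh{\OO}_{X,p})$ \emph{injects} into $\Kt_{-1}(X)$ --- is sequence \eqref{eqn:main-exact-sequence} of Proposition \ref{prop:Class-groups-singularity}, and it is proved there only under the extra hypothesis $F^2\Ksg_0(\wh{\OO}_{X,p})=0$ at every singular point. That hypothesis holds for surfaces and for $cA_n$ threefold points (which is why Corollary \ref{cor:K_{-1}} is stated in that generality), but it is not available for an arbitrary normal Gorenstein projective variety with isolated singularities, which is the generality of the theorem. In general the relation goes the other way: writing $\KKsg_0(X)\simeq\bigoplus_p\Ksg_0(\wh{\OO}_{X,p})$ and using that $\Ksg_0(X)$ and $\KKsg_0(X)$ both coincide with the first step $F^1$ of the topological filtration (irreducibility kills $\Gr^0$), the cokernel of $\Cl(X)/\Pic(X)\to\bigoplus_p\Cl(\wh{\OO}_{X,p})$ is identified with $\KKsg_0(X)\big/\bigl(F^2\KKsg_0(X)+\im\Ksg_0(X)\bigr)$, which is a \emph{quotient} of $\Kt_{-1}(X)$, not a subgroup. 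So the step you flag as delicate --- "verifying that the local class groups realize the full cokernel" inside $\Kt_{-1}(X)$ --- is not merely delicate but false in general.

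Fortunately the implication you actually need survives, since a quotient of the zero group is zero; but the correct argument is the filtration argument of Proposition \ref{prop:max-nonfact}: $\Kt_{-1}(X)=0$ makes $\Ksg_0(X)\to\KKsg_0(X)$ surjective by \eqref{SES}; this map respects the topological filtration by \eqref{eq:Fi-relative-completion}; both sides are concentrated in filtration degree $\ge 1$, so surjectivity descends to $\Gr^1$; and Proposition \ref{prop:filtration-Ksg1} 3) identifies $\Gr^1\Ksg_0(X)\simeq\Cl(X)/\Pic(X)$ and $\Gr^1\Ksg_0(\wh{\OO}_{X,p})\simeq\Cl(\wh{\OO}_{X,p})$. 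Two further cautions. First, your phrase that $\Kt_{-1}(X)$ is "assembled from the local contributions $\Kt_{-1}(\OO_{X,p})$" is misleading: $\Kt_{-1}$ of a complete (or Henselian) local ring vanishes by Drinfeld's theorem, so $\Kt_{-1}(X)$ is a global-to-local discrepancy of $\Ksg_0$-groups, not a sum of local $\Kt_{-1}$'s. Second, the local class groups must be those of the \emph{completed} local rings $\wh{\OO}_{X,p}$, as in Definition \ref{def:mnf}: with Zariski local rings $\OO_{X,p}$ the conclusion would be vacuous for factorial $X$ (all $\Cl(\OO_{X,p})=0$), whereas factorial varieties can well have $\Kt_{-1}(X)\neq 0$, cf.\ Example \ref{ex:hypers}.
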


This explains why the two nodal threefolds
with a Kawamata type decomposition studied by Kawamata \cite{kawamata2} are nonfactorial. In both cases the threefold
$X$ has a single ordinary double point
with defect of $X$ being equal to one (in the nodal quadric threefold case $\Pic(X) = \Z$, $\Cl(X) = \Z^2$, 
while in the other example $\Pic(X) = \Z^2$, $\Cl(X) = \Z^3$), which illustrates the maximal
nonfactoriality of $X$.
Furthermore using the theorem above
we show that many types of threefolds do 
not admit decompositions \eqref{eq:dec2}.

\begin{application}[Example 
\ref{ex:hypers},
\ref{example:del-Pezzo-3folds-KSOD},
\ref{example:nodal-chain-blowup}]
The following types of nodal
threefolds do not admit
a Kawamata type semiorthogonal decomposition:
\begin{enumerate}
    \item All nodal threefold
hypersurfaces $X \subset \P^4$, except for the nodal quadric.

\item All nodal threefold
double solids $X \overset{2:1}{\to} \P^3$, 
except for the nodal
quadric.

\item Del Pezzo threefolds $V_d$ of degrees $1 \le d \le 4$ with maximal class group rank \cite{prokhorov}.

\item Threefolds obtained by
blowing up a nodal irreducible curve in a smooth threefold.
\end{enumerate}
\end{application}

Del Pezzo threefolds in
(3) can also be described
as follows \cite[Theorem 7.1]{prokhorov}: $V_d$ is a blow up
of $8-d$ general points on $\P^3$
followed by contraction of proper
preimages of lines
passing through pairs of points and twisted
cubics through six-tuples of 
points (for $d = 1, 2$).
Thus we negatively answer a question of Kawamata \cite[Remark 7.5]{kawamata2}, in all cases 
except for $d = 5$ which is a $3$-nodal $V_5$.
In fact we expect that only a few types of nodal Fano threefolds 
admit Kawamata type semiorthogonal decompositions. 
Looking at the potential cases of Fano threefolds
with maximal defect, I. Cheltsov has suggested the following.

\begin{conjecture}
The only nodal Fano threefolds of Picard rank one 
with Kawamata type decompositions
are the quadric, $V_5$ and $V_{22}$.
\end{conjecture}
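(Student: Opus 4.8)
The plan is to split the conjecture into two halves: first, that the quadric, $V_5$ and $V_{22}$ genuinely admit Kawamata type decompositions, and second, that no other nodal Fano threefold of Picard rank one does. For the positive half, the nodal quadric is already handled by Kawamata (Example \ref{example:Kawamata}). For the three-nodal $V_5$ and the relevant nodal $V_{22}$ I would produce explicit decompositions by resolving the nodes, transporting a known full exceptional collection on the smooth member along the resolution $\pi \colon \wt X \to X$, and mutating and pushing forward as in the standard construction of \eqref{eq:dec2}. Both threefolds are expected to be maximally nonfactorial, with defect equal to the number of nodes, so that $\Kt_{-1}(X) = 0$ and there is no $K$-theoretic barrier; the remaining task is to verify that the pushed-forward generators assemble into admissible subcategories of the form $\Db(R_i)$ with $R_i$ finite-dimensional.

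For the negative half, the strategy is to invoke the main Theorem: a Kawamata type decomposition forces $\Kt_{-1}(X) = 0$, which for nodal threefolds is equivalent to maximal nonfactoriality, i.e. the defect equalling the number of nodes. I would then run through the classification of Fano threefolds of Picard rank one by genus and index, and for each family bound the defect of its nodal degenerations against the node count. Consistently with the del Pezzo computations behind the Application above (Example \ref{example:del-Pezzo-3folds-KSOD}), the expectation is that outside the three listed families the defect is strictly smaller than the number of nodes, so that $\Kt_{-1}(X) \neq 0$ and \eqref{eq:dec2} cannot exist.

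The main obstacle is twofold. First, maximal nonfactoriality is only a \emph{necessary} condition extracted from $\Kt_{-1}$, and there will almost certainly remain maximally nonfactorial nodal Fano threefolds of Picard rank one that are not among the three; ruling these out requires finer invariants than $\Kt_{-1}$ alone---Hochschild homology, the higher or idempotent-completed singularity $K$-theory, or Brauer-type obstructions---which is precisely what the numeral ``I'' in the title anticipates. Second, even the defect bookkeeping is delicate: computing $\Cl(X)$ for the nodal degenerations in each Fano family and comparing it with $\Pic(X)$ is a case analysis that depends sensitively on the position of the nodes.

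Thus I expect the $\Kt_{-1}$ obstruction to cleanly eliminate all cases that fail to be maximally nonfactorial, reducing the conjecture to a finite list of maximally nonfactorial candidates; but I expect the full statement to require genuine geometric constructions for $V_5$ and $V_{22}$ on the existence side, and additional obstruction theory beyond the $\Kt$-theory developed here on the non-existence side. The precise matching of ``maximally nonfactorial of Picard rank one'' with exactly $\{$quadric, $V_5$, $V_{22}\}$ is the step I would least expect to follow from the tools of this paper alone.
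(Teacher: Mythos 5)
There is nothing to compare your proposal against: the statement you were asked to prove is an open \emph{conjecture}, attributed in the paper to I.~Cheltsov, and the paper contains no proof of it. The paper's own results supply only the necessary condition $\Kt_{-1}(X)=0$ (equivalently, maximal nonfactoriality for nodal threefolds, via Proposition \ref{prop:max-nonfact} and Corollary \ref{cor:K_{-1}-obstruction}), and use it to rule out specific families: nodal hypersurfaces and double solids (Example \ref{ex:hypers}) and del Pezzo threefolds $V_d$, $1\le d\le 4$, with maximal class group rank (Example \ref{example:del-Pezzo-3folds-KSOD}). The paper goes no further: in table \eqref{table} the $3$-nodal $V_5$ is explicitly marked ``?'' with the comment that the authors ``cannot detect obstructions with our methods,'' and $V_{22}$ is never analyzed at all. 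So your proposal cannot be judged as an alternative route to a proof the paper gives, because no such proof exists there.

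As a research plan your outline is reasonable and, to your credit, you correctly diagnose exactly where it breaks down: (i) on the existence side, the only case settled in the paper is the nodal quadric (Example \ref{example:Kawamata}); constructing Kawamata type decompositions for the $3$-nodal $V_5$ and for nodal $V_{22}$ by pushing forward exceptional collections from a small resolution is plausible but nowhere carried out, and is genuinely open; (ii) on the non-existence side, $\Kt_{-1}$ vanishing is only a necessary condition, so even a complete defect-versus-node-count case analysis over the classification of rank-one Fano threefolds could at best shrink the candidate list to the maximally nonfactorial ones --- it cannot, by itself, show that every remaining maximally nonfactorial candidate outside $\{\text{quadric}, V_5, V_{22}\}$ fails to admit a decomposition. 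The paper itself points to its sequel \cite{KPS-new}, which develops representation-theoretic obstructions, precisely because the $\Kt$-theoretic obstruction of this paper is not expected to suffice. In short: your proposal is an honest sketch of a strategy, not a proof, and its status matches the status of the statement in the paper, which is that of a conjecture motivated by, but not provable with, the tools developed here.
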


However, in spite of the sparsity of the Fano examples,
we can construct lots of nodal threefolds
with a Kawamata decomposition using the blow up construction
with a locally complete
intersection center
as soon as the base variety
and the center of the blow up
both admit Kawamata type decompositions
(see Theorem \ref{thm:blowup}
and Corollary \ref{cor:blowup-ksod}).
In particular,
blowing up a smooth threefold in a disjoint
union of nodal trees
of smooth rational curves produces
nodal threefolds with
an arbitrary large
number of ordinary double
points and admitting a Kawamata type decomposition:

\begin{theorem}[Corollary {\ref{cor:nodal-blowup-characterisation}}]
Let $X$ be a smooth projective threefold
and $C$ is a disjoint union of nodal curves in $X$
such that all irreducible components of $C$ are rational curves. Then the blow up $\wt{X}$ of $X$ along $C$ admits a Kawamata type semiorthogonal decomposition if and only if $C$ is a disjoint union of nodal trees
with smooth rational components.
\end{theorem}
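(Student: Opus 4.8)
The plan is to reduce both directions to the combinatorics of the dual graph $\Gamma$ of $C$, whose vertices are the irreducible components of $C$ and whose edges are the nodes (a node at which a single component meets itself contributing a loop). With this dictionary, $C$ being a disjoint union of nodal trees with smooth rational components is equivalent to $\Gamma$ being a forest: no loops (every component smooth) and no cycles. Since all components of $C$ are rational by hypothesis, the curve theory of Section \ref{sec:K-theory} (Corollary \ref{cor:nodal}) should give $\Kt_{-1}(C) \cong \Z^{b_1(\Gamma)}$, with $b_1(\Gamma)$ the number of independent cycles, so that $\Kt_{-1}(C) = 0$ holds exactly when $\Gamma$ is a forest. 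This is the combinatorial heart both directions will exploit.

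For the \textbf{if} direction I would first record that a nodal curve $C$ in a smooth threefold is a local complete intersection: away from the nodes this is clear, and at a node the ideal is locally $(z, xy)$, a complete intersection of codimension two. When $\Gamma$ is a forest, each connected component of $C$ is a nodal tree of $\P^1$'s and hence admits a Kawamata type decomposition by Burban's work \cite{burban} (extended from chains to trees), while a disjoint union of such curves is orthogonal, so $C$ itself admits a Kawamata type decomposition. As $X$ is smooth it trivially admits one with $\AA = \Db(X)$. Applying the blow-up theorem (Theorem \ref{thm:blowup} and Corollary \ref{cor:blowup-ksod}) to the lci center $C$ then produces the desired decomposition on $\wt X$.

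For the \textbf{only if} direction I would invoke the $\Kt_{-1}$ obstruction. By the main theorem (Proposition \ref{prop:max-nonfact} and Corollary \ref{cor:K_{-1}-obstruction}), a Kawamata type decomposition on $\wt X$ forces $\Kt_{-1}(\wt X) = 0$. The key input is a blow-up formula: a semiorthogonal decomposition of the form $\Db(\wt X) = \langle \Db(X), \Db(C) \rangle$ for the lci center $C$ (one copy of $\Db(C)$, since the codimension is two) underlying Theorem \ref{thm:blowup}, combined with additivity of $\Kt_{-1}$ along semiorthogonal decompositions and the vanishing $\Kt_{-1}(X) = 0$ for smooth $X$, should yield $\Kt_{-1}(\wt X) \cong \Kt_{-1}(C)$. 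A decomposition on $\wt X$ then forces $\Kt_{-1}(C) = 0$, which by Corollary \ref{cor:nodal} forces $\Gamma$ to be a forest, i.e. $C$ to be a disjoint union of nodal trees with smooth rational components.

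The same conclusion can be reached by a direct, and geometrically transparent, class-group computation: $\wt X$ acquires one ordinary double point over each node of $C$ (locally the blow-up of $(z,xy)$ is the conifold $\{xy = zt\}$), and the exceptional divisors $E_v$, one per component, restrict at a node to the two opposite rulings $\pm 1$ of the local class group $\Z$ when the two branches lie on distinct components, and to the Cartier class $0$ when the two branches lie on the same component (a loop). The localization map $\Cl(\wt X) \to \bigoplus_{p} \Cl(\OO_{\wt X, p}) \cong \Z^{n}$, with $n$ the number of nodes, is then the signed incidence matrix of $\Gamma$, whose cokernel is $\Kt_{-1}(\wt X) \cong \Z^{b_1(\Gamma)}$. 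The main obstacle, in either route, is identical: establishing the blow-up semiorthogonal and $\Kt$-theory formula for the \emph{singular} lci center $C$, and pinning down the local class-group contributions of the exceptional divisors with the correct signs, so that the presence of loops and cycles in $\Gamma$ versus its being a forest is faithfully recorded by the non-vanishing of $\Kt_{-1}$.
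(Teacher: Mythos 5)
Your proposal is correct and follows essentially the same route as the paper: the ``if'' direction via Burban's theorem for nodal trees (Theorem \ref{thm:burban}) combined with the blow-up result (Theorem \ref{thm:blowup} and Corollary \ref{cor:blowup-ksod}), and the ``only if'' direction via the $\Kt_{-1}$ obstruction (Corollary \ref{cor:K_{-1}-obstruction}), the $\Kt$-theory blow-up formula of Theorem \ref{thm:blowup} with $c=2$ and $\Kt_{-1}(X)=0$, and the dual-graph computation $\Kt_{-1}(C)\simeq\Z^{b_1(\Gamma)}$ of Corollary \ref{cor:nodal}, which the paper packages as Corollary \ref{cor:nodal-curve-characterisation}. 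Your supplementary class-group/incidence-matrix sketch is a reasonable consistency check but is not needed and is not part of the paper's argument.
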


\subsection*{Relation to other work}
The link between idempotent completeness of the Orlov singularity category
and nonfactoriality is already present in the 
work of Iyama and Wemyss \cite{iyama-wemyss}.
It follows from \cite[Theorem 1.2]{iyama-wemyss}
that nodal threefolds with idempotent complete singularity categories
are nonfactorial. However from
the perspective of our applications
our results are sharper in a sense that we show
maximal nonfactoriality, which is strictly stronger than
nonfactoriality for varieties with several
ordinary double points.

The Grothendieck group of the singularity category has
been used by the first author of this paper and Karmazyn \cite[Corollary 5.3]{kalck-karmazyn}
to show that some types of surface quotient singularities
most notably $D_n$, $n \ge 4$ and $E_n$, $n = 6, 7, 8$ do not allow
a decomposition \eqref{eq:dec} with \emph{local} possibly noncommutative
algebras $R_i$'s. Even though all existing Kawamata type decompositions
for Gorenstein surfaces only admit $A_n$ singularities \cite{karmazyn-kuznetsov-shinder}, 
we do not currently know how to rule 
out $D_n$ and $E_n$ singularities without assuming that the algebras $R_i$ are local.

A similar
sort of obstruction to $\Kt_{-1}$
has been
used by Karmazyn, Kuznetsov
and the third
author of the present paper
\cite{karmazyn-kuznetsov-shinder},
where it is shown that a necessary condition
for existence of a decomposition
\eqref{eq:dec} on a projective normal rational
surface $X$ with rational singularities
is vanishing of the Brauer group $\Br(X)$.
We explain in Proposition \ref{prop:surface-Br}
that for such surfaces $\Br(X) \simeq \Kt_{-1}(X)$, so in this paper we
generalize
the obstruction from \cite{karmazyn-kuznetsov-shinder} 
from surfaces
to higher-dimensional varieties.

In the sequel to this paper \cite{KPS-new} we study restrictions on types of singularities
that are forced by Kawamata type decompositions, using 
representation theory of finite-dimensional algebras.

\subsection*{Acknowledgements} 
M.K. is deeply grateful to his family and to Wolfgang Soergel for the opportunity to work on this project.
He was partially supported by the GK1821 at the Universit\"at Freiburg; the latter grant
also allowed for a visit
of N.P. to Freiburg.
E.S. was partially supported by 
Laboratory of Mirror Symmetry NRU HSE, RF government grant, ag. N~14.641.31.0001.
N.P. and E.S. would like to thank the Max-Planck-Institut f\"ur Mathematik
in Bonn for the
excellent working conditions in which
much of this work has been planned
and discussed.
E.S. would also like to thank the
School of Mathematics and Statistics
at the University of Sheffield for providing
him with the Study Leave opportunity in
the second semester of 2019.

We thank I. Cheltsov,
J. Karmazyn,
Y. Kawamata,
A. Kuznetsov,
A. Kuznetsova,
D. Ploog, 
T. Raedschelders,
C. Shramov,
G. Stevenson,
J. Vitoria,
M. Wemyss
for useful conversations and e-mail correspondence
regarding this work,
and the referee for their comments
which significantly improved the write-up.

\section{Preliminaries}
\label{sec:prelim}

\subsection{Notation}
We work over an algebraically closed field $k$ of characteristic zero. 
By a scheme we mean a $k$-scheme satisfying
Orlov's ELF condition, that is a separated Noetherian
$k$-scheme
of finite Krull dimension
and having enough locally free coherent
sheaves.
By an (algebraic) variety we mean a reduced, but not necessarily irreducible, scheme of finite type over $k$. 

All triangulated categories are assumed to be k-linear. The opposite category of a category $\TT$ will be denoted $\TT^{\circ}$. We denote by $\DD(\Qcoh(X))$ the unbounded derived category of quasi-coherent sheaves, by $\Db(X)$ the bounded derived category of coherent sheaves of a variety $X$ and by $\Dperf(X)$ its full subcategory consisting of perfect complexes. Similarly, for a $k$-algebra $R$ we denote by $\Db(R)=\Db(\mod\text{-}R)$ the bounded derived category of finitely generated right modules over $R$ and $\Dperf(R)$ is again the full subcategory of perfect complexes in $\Db(R)$.

All functors such as pull-back $\pi^*$, pushforward $\pi_*$ and tensor product $\otimes$ when considered between derived categories are derived functors.

\subsection{Semiorthogonal decompositions and saturatedness}

Following \cite{bondal,bondal-kapranov, kuznetsov-hpd}, we recall standard definitions and properties of semiorthogonal decompositions of triangulated categories, of saturated categories and relations between these two notions.

Let $\TT$ be a triangulated category.
We call $\TT$ \emph{$\Hom$-finite} if $\dim_k\Hom(A,B)<\infty$ for all $A,B\in\TT$;
we call $\TT$ \emph{of finite type} 
if $\bigoplus_i\dim_k\Hom(A,B[i])<\infty$ for all $A,B\in\TT$.
For example, if $X$ is projective, then
$\Db(X)$ is $\Hom$-finite, and $\Dperf(X)$
is of finite type.

A triangulated category $\TT$ is called \emph{idempotent complete} (or Karoubian) 
if every idempotent $e \in \Hom(A,A)$ gives rise to a direct sum decomposition of $A$.
It is well-known that for every triangulated category $\TT$
has a triangulated idempotent completion $\TT \subset \ol{\TT}$ \cite{balmer-schlichting}.

Let $\AA\subset\TT$ be a full triangulated subcategory. The left and right orthogonals to $\AA$ are defined as
\begin{align*}
^{\perp}\AA =\lbrace T\in\TT \mid \forall A\in\AA , \ \Hom(T,A)=0 \rbrace ,\\
\AA^{\perp}=\lbrace T\in\TT \mid \forall A\in\AA , \ \Hom(A,T)=0 \rbrace .
\end{align*}

\begin{definition}[\cite{bondal-orlov}]
A collection $\AA_1,\ldots ,\AA_m$ of full triangulated subcategories of $\TT$ is called a semiorthogonal decomposition if for all $1\leq i < j\leq m$
\[
\AA_i\subset\AA_j^{\perp}
\]
and if the smallest triangulated subcategory of $\TT$ containing $\AA_1 ,\ldots ,\AA_m$ coincides with $\TT$. We use the notation 
\[
\TT=\langle\AA_1 , \ldots , \AA_m\rangle
\]
for a semiorthogonal decomposition of $\TT$ with components $\AA_1 , \ldots , \AA_m$.
\end{definition}

The next Lemma is well-known and follows immediately from the definitions:

\begin{lemma}\label{lem:semiorth-complete}
If $\TT$ admits a semiorthogonal decomposition into components $\AA_1, \dots, \AA_m$
then $\TT$ is idempotent complete if and only if all $\AA_i$'s are idempotent complete.
\end{lemma}

\begin{definition}[\cite{bondal,bondal-kapranov}]
A full triangulated subcategory $\AA$ of $\TT$ is called left (resp. right) admissible, if the inclusion functor $\AA\subset\TT$ has a left (resp. right) adjoint. If $\AA$ is both left and right admissible, then we call $\AA$ admissible in $\TT$.
\end{definition}

\begin{lemma}[{\cite[Proposition 1.5]{bondal-kapranov}}]
Let $\AA$ be a full triangulated subcategory of $\TT$, then $\AA$ is left (resp. right) admissible in $\TT$ if and only if there is a semiorthogonal decomposition $\TT=\langle\AA , {^{\perp}\AA}\rangle$ (resp. $\TT=\langle\AA^{\perp} , \AA\rangle$).
\end{lemma}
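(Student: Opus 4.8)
The plan is to establish the right-admissible case in full and then deduce the left-admissible case by passing to the opposite category $\TT^\circ$: under $(-)^\circ$ left and right adjoints are interchanged, the two orthogonals $\AA^\perp$ and ${}^\perp\AA$ are swapped, and the order of the components of a semiorthogonal decomposition is reversed. Hence ``$\AA$ left admissible in $\TT$ $\iff$ $\TT=\langle\AA,{}^\perp\AA\rangle$'' is literally the right-admissible statement applied to $\AA^\circ\subset\TT^\circ$, and no separate argument is needed.

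For the forward implication I would let $i\colon\AA\hookrightarrow\TT$ be the inclusion with right adjoint $i^!$, take the counit $\varepsilon_T\colon i\,i^!T\to T$, and complete it to a triangle
\[
i\,i^!T \xrightarrow{\varepsilon_T} T \to B \to i\,i^!T[1].
\]
The point is that $B\in\AA^\perp$. For $A\in\AA$ the adjunction isomorphisms $\Hom(A[n],i\,i^!T)\cong\Hom(A[n],T)$ (valid for every $n$ since $\AA$ is closed under translation, and realised by postcomposition with $\varepsilon_T$) make both maps $\Hom(A,i\,i^!T)\to\Hom(A,T)$ and $\Hom(A,i\,i^!T[1])\to\Hom(A,T[1])$ isomorphisms; exactness of the long exact sequence obtained by applying $\Hom(A,-)$ to the triangle then squeezes out $\Hom(A,B)=0$. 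Since $i\,i^!T\in\AA$ and $B\in\AA^\perp$, this triangle shows that $\AA$ and $\AA^\perp$ generate $\TT$, and as the semiorthogonality $\AA^\perp\subset\AA^\perp$ is tautological, this is exactly $\TT=\langle\AA^\perp,\AA\rangle$.

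For the converse I assume $\TT=\langle\AA^\perp,\AA\rangle$ and must manufacture the adjoint. The first task is to upgrade the generation hypothesis to the existence, for every $T$, of a \emph{decomposition triangle} $A_T\to T\to B_T$ with $A_T\in\AA$ and $B_T\in\AA^\perp$: I would check that the full subcategory of objects admitting such a triangle is triangulated (closure under shifts is immediate, and closure under cones is an octahedral / $3\times 3$ diagram argument in which the connecting maps between the $\AA$- and $\AA^\perp$-parts vanish by semiorthogonality) and contains both $\AA$ and $\AA^\perp$, hence is all of $\TT$. Such a triangle is unique up to unique isomorphism because for $A\in\AA$, $B\in\AA^\perp$ one has $\Hom(A,B)=0$ and $\Hom(A,B[-1])=\Hom(A[1],B)=0$, which forces any morphism of decomposition triangles to be a unique isomorphism on $\AA$-parts; this rigidity promotes $T\mapsto A_T$ to a functor $i^!\colon\TT\to\AA$ with counit $A_T\to T$. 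Finally, applying $\Hom(A',-)$ for $A'\in\AA$ to the triangle and using $\Hom(A',B_T)=\Hom(A',B_T[-1])=0$ gives $\Hom(A',A_T)\xrightarrow{\sim}\Hom(A',T)$, which is precisely the adjunction isomorphism exhibiting $i^!$ as right adjoint to $i$.

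The main obstacle is this converse direction, and within it the passage from the abstract generation-plus-semiorthogonality data to an honest adjoint functor: producing the decomposition triangle through the octahedron and, above all, proving it depends functorially on $T$. Both steps rest entirely on the vanishings $\Hom(A,B)=0$ and $\Hom(A,B[-1])=\Hom(A[1],B)=0$ for $A\in\AA$, $B\in\AA^\perp$; it is the second, shifted, vanishing that converts mere orthogonality into the uniqueness needed for functoriality, so this is where semiorthogonality is used in an essential rather than formal way.
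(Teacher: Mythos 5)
Your proof is correct and complete: the counit triangle argument for the forward direction, the construction of functorial decomposition triangles via the shifted vanishing $\Hom(A[1],B)=0$ for the converse, and the reduction of the left-admissible case to the right-admissible one through the opposite category are all sound. The paper itself gives no proof of this lemma, citing \cite[Proposition 1.5]{bondal-kapranov} instead, and your argument is precisely the standard one from that reference, so there is nothing to compare beyond noting agreement.
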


\begin{definition}
We call a semiorthogonal decomposition $\TT=\langle \AA_1 ,\ldots ,\AA_m\rangle$ admissible if every $\AA_i$ is admissible in $\TT$.
\end{definition}

Admissible decompositions are called strong in \cite{kuznetsov-hpd}.
Let us recall in what follows the relation between (left/right) admissible subcategories and representability of (co)homological functors of finite type. 
Note that in the following definition we do not assume that our triangulated category $\TT$ is of finite type (which is assumed in \cite{bondal-kapranov}).

\begin{definition}[\cite{bondal-kapranov}]
A $\Hom$-finite triangulated category $\TT$ is called left (resp. right) saturated if 
any exact functor $\TT\to\Db(k)$ (resp. $\TT^{\circ}\to\Db(k)$) is representable. If $\TT$ is both left and right saturated, then we call $\TT$ saturated.
\end{definition} 

\begin{theorem}[Rouquier, Neeman]
If $X$ is a projective variety, then $\Db(X)$ is saturated. 
\end{theorem}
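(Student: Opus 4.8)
The plan is to establish saturatedness of $\Db(X)$ for projective $X$ by invoking the two foundational ingredients that the theorem's attribution (Rouquier, Neeman) points to: the representability theorem for compactly generated triangulated categories, and the identification of $\Db(X)$ with the compact objects in $\DD(\Qcoh(X))$. First I would recall that for $X$ projective (indeed, satisfying the ELF condition of the Notation subsection), the big derived category $\DD(\Qcoh(X))$ is compactly generated, and its subcategory of compact objects is precisely $\Dperf(X)$. Since $X$ is projective, $\Db(X)$ is $\Hom$-finite, which is the standing hypothesis in the definition of saturatedness.

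The core of the argument is Neeman's Brown representability: any cohomological functor $\DD(\Qcoh(X))^{\circ} \to \Ab$ that sends coproducts to products is representable. To pass from a given exact functor $F\colon \Db(X)^{\circ} \to \Db(k)$ (of the type appearing in the definition of right saturated) to a representable one, I would compose with the cohomology $H^0$ and then left Kan extend $F$ along the inclusion $\Db(X) \hookrightarrow \DD(\Qcoh(X))$ to obtain a coproduct-preserving functor on the big category. Neeman's theorem then yields an object $E \in \DD(\Qcoh(X))$ representing the extended functor, and the $\Hom$-finiteness together with the finiteness of $F$ forces $E$ to lie in $\Db(X)$ itself. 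The symmetric statement (left saturatedness) follows by applying the same machinery to $\Db(X)$ rather than $\Db(X)^{\circ}$, using that on a smooth projective variety Serre duality would make this automatic, but in the singular case one instead appeals directly to Neeman's dual representability or to Rouquier's formulation in terms of strong generators.

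The cleanest route, which I expect to be the one intended, is to cite Rouquier's result that $\Db(X)$ for $X$ projective is \emph{strongly generated} (has a strong generator, i.e.\ finite Rouquier dimension), combined with Bondal--Van den Bergh's theorem that a $\Hom$-finite, idempotent-complete, strongly generated triangulated category with a (co)generator is saturated. Here I would note that $\Db(X)$ is idempotent complete (it is the bounded derived category of an abelian category), and that the existence of a strong generator is exactly Rouquier's contribution, while the passage from strong generation to representability of finite-type functors is the Bondal--Van den Bergh criterion.

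The main obstacle is the singular case: when $X$ is singular, $\Db(X)$ and $\Dperf(X)$ differ, and $\Db(X)$ is no longer the compact objects of any obvious compactly generated category, so one cannot naively apply Brown representability inside $\DD(\Qcoh(X))$ whose compacts are $\Dperf(X)$. The resolution is that saturatedness is a property concerning functors \emph{out of} $\Db(X)$ valued in $\Db(k)$, and Rouquier's strong generation result holds for $\Db(X)$ on \emph{any} (possibly singular) projective variety; the delicate point is verifying that the Bondal--Van den Bergh representability criterion applies without the smoothness or finite-type hypotheses that appear in the original Bondal--Kapranov formulation, which is precisely why the Notation subsection flags that finite type is \emph{not} assumed here. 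I would therefore devote the bulk of the write-up to carefully citing Rouquier's strong generation (\cite{rouquier-dimensions}) and the representability criterion in a form valid for merely $\Hom$-finite idempotent-complete categories, rather than recomputing anything.
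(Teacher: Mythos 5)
Your proposal has a genuine gap, and it sits exactly at the point you defer to a citation. The Bondal--Van den Bergh representability criterion does \emph{not} exist "in a form valid for merely $\Hom$-finite idempotent-complete categories": their theorem \cite{bondal-van-den-bergh} requires the category to be of finite type (Ext-finite), i.e.\ $\bigoplus_i \dim_k \Hom(A,B[i]) < \infty$, and this fails for $\Db(X)$ precisely when $X$ is singular (for a singular point $p$ one has $\Ext^i(\OO_p,\OO_p) \neq 0$ for infinitely many $i$; this is why the paper's preliminaries stress that $\Db(X)$ is only $\Hom$-finite while $\Dperf(X)$ is of finite type). So the package "strong generation (Rouquier) $+$ BvdB-type criterion" cannot be assembled from the literature: extending the criterion beyond the finite-type case is exactly the content of Rouquier's Corollary 7.51(ii) in \cite{rouquier}, whose proof --- as the paper's footnote records --- contains a mistake, and the statement was only established by Neeman \cite{neeman-original, neeman} using the genuinely new machinery of approximable triangulated categories, not by a BvdB-style induction on strong generation. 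Your first route (left Kan extension into $\DD(\Qcoh(X))$ plus Brown representability) has the same status: the step where "$\Hom$-finiteness together with the finiteness of $F$ forces $E$ to lie in $\Db(X)$" is precisely the hard boundedness-and-coherence statement, for which there is no short argument; in effect both of your routes amount to re-proving Neeman's theorem rather than citing it.

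For comparison, the paper's actual proof takes Neeman's representability theorem as the black box: covariant exact functors $\Db(X) \to \Db(k)$ are represented by \emph{perfect} complexes, which gives left saturatedness immediately. Right saturatedness then does not follow "symmetrically," and indeed it cannot: the representing objects on the contravariant side are not perfect in general. Instead, given a contravariant $F$, the paper applies the dualization $D = \RHom_{\Db(k)}(-,k)$ to produce a covariant functor, represents it by a perfect complex $\PP$, and uses Grothendieck--Verdier duality for $p\colon X \to \Spec(k)$ to conclude that $F$ is represented by $\PP \otimes \omega_X^\bullet$. This asymmetry between the two sides (perfect complexes versus twists by the dualizing complex) is invisible in your write-up, and it is a second indication that no symmetric "$\Hom$-finite $+$ strong generator $\Rightarrow$ saturated" criterion can be the mechanism behind the theorem.
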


\begin{proof}
It is a result proved by Neeman
that covariant functors $\Db(X) \to \Db(k)$
are represented by perfect complexes
\cite[Theorem 0.2]{neeman-original}
(see also
\cite[Theorem 7.1]{neeman})
\footnote{
The same result has been earlier announced by Rouquier
\cite[Corollary 7.51 (ii)]{rouquier}
but its proof has a mistake.}.
This immediately implies left saturatedness of $\Db(X)$.
Let us explain how this representability
result together
with Grothendeick-Verdier
duality also yields
right saturatedness of $\Db(X)$.

We need some notation first.
Let us denote by $D:\Db(k)\to\Db(k)^{\circ}$ the dualizing functor $D(\text{-})=\RHom_{\Db(k)}(\text{-},k)$. Denote further by $\omega_X^{\bullet}\in\Db(X)$ the dualizing complex of $X$ (see Subsection \ref{subsection:Gorenstein} below) and write $\PP^{\vee}=\RR\HH om(\PP,\OO_X)\in\Db(X)$ for a perfect complex $\PP\in\Db(X)$. 
Finally, for a functor
$F: \CC \to \DD$ we write
$F^\circ: \CC^{\circ} \to \DD^{\circ}$
for the same functor between
the opposite categories. This way,
both
$D D^\circ$ and
$D^\circ D$ are canonically isomorphic
to the identity functors
on respective categories.
Taking the opposite 
preserves compositions of functors.

Let $F:\Db(X)^{\circ}\to\Db(k)$ be an exact functor. Note that
by definition representability
of $F$ is tautologically the
same as representability of $F^\circ$. 
Consider $(DF)^\circ$ which is a covariant exact functor on $\Db(X)$ and thus as explained above $(DF)^\circ = D^\circ F^\circ$ is represented by a perfect complex $\PP\in\Db(X)$. 
We compute
\begin{align*}
    F^\circ \simeq DD^\circ F^\circ
    &\simeq \RHom_{\Db(k)}( \RHom_{\Db(X)}(\PP ,\text{-}),k)\\ 
    &\simeq \RHom_{\Db(k)}( Rp_* ((\text{-}) \otimes \PP^{\vee}) , k )\\
    &\simeq \RHom_{\Db(X)}(  (\text{-}) \otimes \PP^{\vee} , \omega_X^{\bullet} )\\
    &\simeq \RHom_{\Db(X)}( \text{-},\PP\otimes\omega_X^{\bullet}) ,
\end{align*}
where we used the fact that $\PP$ is a perfect complex in the second and fourth equality and Grothendieck-Verdier duality with respect to the projection $p : X \to \Spec(k)$ in the third equality. Hence $F$ is represented by $\PP\otimes\omega_X^{\bullet}\in\Db(X)$.
\end{proof}








\begin{lemma}[\cite{bondal-kapranov}]\label{lem:admissible-implies-saturated}
Let $\TT$ be a $\Hom$-finite and saturated triangulated category and let $\AA$ be a left (resp. right) admissible full triangulated subcategory of $\TT$. Then $\AA$ is saturated.
\end{lemma}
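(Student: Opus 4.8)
The plan is to transport the saturatedness of $\TT$ to $\AA$ through the adjoint functors supplied by admissibility, handling the two saturatedness conditions symmetrically. First observe that $\AA$ is automatically $\Hom$-finite: for $A,B\in\AA$ one has $\Hom_\AA(A,B)=\Hom_\TT(A,B)$, which is finite-dimensional by hypothesis. Write $i\colon\AA\hookrightarrow\TT$ for the inclusion; since $i$ is fully faithful, a left adjoint $L\colon\TT\to\AA$ (resp. a right adjoint $R$) automatically satisfies $Li\cong\id_\AA$ (resp. $Ri\cong\id_\AA$), and I will use this throughout.

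The heart of the argument is a representability transport. Assume for definiteness that $\AA$ is left admissible, with left adjoint $L$, and suppose $\AA$ is in fact admissible (both adjoints present); the upgrade to this situation is discussed below. Given an exact functor $F\colon\AA\to\Db(k)$, the composite $FL\colon\TT\to\Db(k)$ is again exact, hence representable by some $T\in\TT$ because $\TT$ is left saturated, say $FL\cong\Hom_\TT(T,-)$. Restricting along $i$, using $Li\cong\id_\AA$ and the adjunction $L\dashv i$, gives for $A\in\AA$
\[ F(A)\cong FL(iA)\cong\Hom_\TT(T,iA)\cong\Hom_\AA(LT,A), \]
so $F$ is represented by $LT\in\AA$ and $\AA$ is left saturated. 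Running the same computation with the right adjoint $R$ in place of $L$, an exact functor $H\colon\AA^{\circ}\to\Db(k)$, the composite $HR^{\circ}\colon\TT^{\circ}\to\Db(k)$, and right saturatedness of $\TT$, represents $H$ by $RS\in\AA$ for a suitable $S\in\TT$, so $\AA$ is right saturated. Thus, once both adjoints are available, saturatedness of $\AA$ follows formally.

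The subtle point is that left admissibility supplies only the left adjoint, and hence by the above only left saturatedness; to deduce the right-hand statement from left admissibility I must first upgrade left admissibility to two-sided admissibility, i.e. produce the right adjoint $R$. This is exactly where saturatedness of $\TT$ is indispensable and is what I expect to be the main obstacle. The mechanism is that a $\Hom$-finite saturated triangulated category admits a Serre functor $S$ (Bondal--Kapranov), and in the presence of a Serre functor left and right admissibility of a full triangulated subcategory coincide, the two adjoints being intertwined by the Serre functors of $\TT$ and of $\AA$; concretely $\Hom_\TT(iA,t)\cong D\,\Hom_\AA(LS^{-1}t,A)$, which manufactures the right adjoint. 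This is the abstract counterpart of the Grothendieck--Verdier duality argument used above to deduce right saturatedness of $\Db(X)$ from Neeman's representability theorem. Granting this upgrade, $\AA$ is admissible and the symmetric transport of the previous paragraph yields both left and right saturatedness, so $\AA$ is saturated; the case where $\AA$ is right admissible is entirely dual.
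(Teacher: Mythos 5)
Your first step is fine: transporting a covariant exact functor $F:\AA\to\Db(k)$ through the left adjoint $L$, representing $F\circ L$ in $\TT$, and pulling the representing object back via the adjunction $\Hom_{\TT}(T,iA)\simeq\Hom_{\AA}(LT,A)$ correctly gives left saturatedness of $\AA$ from left admissibility alone. The genuine gap is in the ``upgrade'' you invoke for the other half. First, the claim that a $\Hom$-finite saturated category admits a Serre functor is a statement about categories \emph{of finite type} (that is Bondal--Kapranov's setting), and the paper explicitly drops the finite-type hypothesis from the definition of saturated. This is not a technicality: the lemma is applied precisely to $\TT=\Db(X)$ with $X$ a \emph{singular} projective variety (Corollary \ref{cor:saturated-subcat-Db(X)}), which is $\Hom$-finite and saturated (by the Rouquier--Neeman theorem above) but has \emph{no} Serre functor. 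Indeed, for a singular point $x\in X$ one has $\Hom(k(x),k(x)[i])\neq 0$ for infinitely many $i\geq 0$, whereas a Serre functor would give $\Hom(k(x),k(x)[i])\simeq\Hom(k(x),S(k(x))[-i])^{\star}$, and the right-hand side vanishes for $i\gg 0$ because $S(k(x))$ is a fixed bounded complex. So your mechanism cannot run in exactly the situation the lemma is needed for. Second, even granting a Serre functor $S_{\TT}$, your identity $\Hom_{\TT}(iA,t)\simeq\Hom_{\AA}(LS_{\TT}^{-1}t,A)^{\star}$ produces a right adjoint only if the contravariant functor $\Hom_{\AA}(LS_{\TT}^{-1}t,-)^{\star}$ is representable in $\AA$, i.e.\ only if $\AA$ itself has a Serre functor; but a Serre functor on $\AA$ is obtained in the literature either from admissibility of $\AA$ (what you are trying to produce) or from saturatedness of $\AA$ (the conclusion of the lemma). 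As written, the upgrade is circular.

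The missing idea, which makes the second adjoint unnecessary and is what the cited proof \cite[Lemma 2.10]{kuznetsov-hpd} does, is an orthogonality argument on the representing object. Given a contravariant exact functor $H:\AA^{\circ}\to\Db(k)$, represent $H\circ L^{\circ}:\TT^{\circ}\to\Db(k)$ by some $T_1\in\TT$ using right saturatedness of $\TT$. Note that $L$ annihilates ${}^{\perp}\AA$: if $c\in{}^{\perp}\AA$ then $\Hom_{\AA}(Lc,Lc)\simeq\Hom_{\TT}(c,iLc)=0$, so $Lc=0$. Hence $\Hom_{\TT}(c,T_1)\simeq H(Lc)=0$ for all $c\in{}^{\perp}\AA$, i.e.\ $T_1\in({}^{\perp}\AA)^{\perp}$. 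Now the semiorthogonal decomposition $\TT=\langle\AA,{}^{\perp}\AA\rangle$ coming from left admissibility forces $({}^{\perp}\AA)^{\perp}=\AA$: decomposing $T_1$ by a triangle $c\to T_1\to a\to c[1]$ with $c\in{}^{\perp}\AA$ and $a\in\AA$, the map $c\to T_1$ vanishes, so $a\simeq T_1\oplus c[1]$, and then $\Hom(c[1],a)=0$ forces $c=0$. Therefore $T_1\in\AA$, and $H(A)\simeq\Hom_{\TT}(iA,T_1)=\Hom_{\AA}(A,T_1)$, so $\AA$ is right saturated --- with no Serre functor and no second adjoint anywhere. Your covariant half combined with this argument proves the lemma, and the right admissible case is genuinely dual.
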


\begin{proof}
See e.g. \cite[Lemma 2.10]{kuznetsov-hpd}.
\end{proof}

\begin{corollary}\label{cor:saturated-subcat-Db(X)}
Let $X$ be a projective variety. Then any left (resp. right) admissible subcategory of $\Db(X)$ is saturated. 
\end{corollary}

Finite type saturated categories are universally admissible in the following sense.

\begin{proposition}[{\cite[Proposition 2.6]{bondal-kapranov}}]\label{lem:saturated-implies-admissible}
Let $\AA$ be a full triangulated subcategory of $\TT$, where $\TT$ is of finite type and let moreover $\AA$ be left (resp. right) saturated. Then $\AA$ is left (resp. right) admissible in $\TT$.
\end{proposition}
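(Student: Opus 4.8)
The plan is to treat the left saturated case and construct a left adjoint to the inclusion $i : \AA \hookrightarrow \TT$, the right saturated case being entirely symmetric. First I would observe that since $\TT$ is of finite type, the full triangulated subcategory $\AA$ is again of finite type, and in particular $\Hom$-finite, so that the notion of left saturatedness from the preceding definition applies to $\AA$ in the first place.

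Fix $T \in \TT$. The idea is to represent, inside $\AA$, the restriction to $\AA$ of the corepresentable functor $\Hom_\TT(T,-)$. Concretely, I would define $F_T : \AA \to \Db(k)$ by
\[
F_T(A) = \RHom_\TT(T,A) = \bigoplus_i \Hom_\TT(T, A[i])[-i].
\]
Here the finite type hypothesis on $\TT$ is exactly what guarantees that this graded vector space is bounded and finite-dimensional, hence a genuine object of $\Db(k)$; this is the only place that hypothesis enters.

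Next I would check that $F_T$ is exact. Since $\AA$ is a full triangulated subcategory, every triangle in $\AA$ is a triangle in $\TT$, and the cohomological functor $\Hom_\TT(T,-)$ converts it into a long exact sequence of the groups $\Hom_\TT(T,A[i])$, with connecting maps induced by the third morphism of the triangle. Because $k$ is a field, every object of $\Db(k)$ is the sum of its cohomologies, so this long exact sequence is precisely the data of a distinguished triangle $F_T(A) \to F_T(B) \to F_T(C) \to F_T(A)[1]$ in $\Db(k)$; compatibility with shifts is immediate. Promoting the cohomological functor $\Hom_\TT(T,-)$ to an honest \emph{exact} functor valued in $\Db(k)$ is the one genuinely delicate point of the argument, and it is where the field hypothesis together with finite type are really used.

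Finally I would invoke left saturatedness of $\AA$: the exact functor $F_T$ is representable, so there is an object $L(T) \in \AA$ and a natural isomorphism $\RHom_\AA(L(T),-) \cong F_T$ of functors $\AA \to \Db(k)$. Taking the degree-zero part gives a natural isomorphism $\Hom_\AA(L(T),A) \cong \Hom_\TT(T,A)$ for $A \in \AA$. To promote $T \mapsto L(T)$ to a functor $L : \TT \to \AA$, I would use Yoneda: a morphism $T \to T'$ induces, by precomposition, a natural transformation $F_{T'} \to F_T$, hence a morphism $L(T) \to L(T')$ of representing objects, and uniqueness of representing objects yields functoriality. The displayed isomorphism, being natural in both $T$ and $A$, then exhibits $L$ as left adjoint to $i$, so $\AA$ is left admissible. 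For the right saturated case I would run the same argument with the contravariant functor $G_T = \RHom_\TT(-,T)|_{\AA} : \AA^{\circ} \to \Db(k)$, whose representing objects assemble into a right adjoint to $i$, giving right admissibility.
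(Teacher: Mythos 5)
Your proof is correct, and it is essentially the argument the paper is pointing to: the paper itself gives no proof, deferring entirely to the citation \cite[Proposition 2.6]{bondal-kapranov}, and your construction — representing the restriction $\RHom_\TT(T,-)|_{\AA}$ (resp.\ $\RHom_\TT(-,T)|_{\AA}$) using saturatedness of $\AA$, with the finite-type hypothesis ensuring the values lie in $\Db(k)$, and Yoneda providing functoriality of $T \mapsto L(T)$ and the adjunction — is precisely the standard Bondal--Kapranov representability argument, transposed to the $\Db(k)$-valued formulation of saturatedness used in this paper. You also correctly isolate the one delicate point, namely that $F_T$ is exact because over a field a candidate triangle in $\Db(k)$ inducing a long exact sequence on cohomology is distinguished (split it degreewise by choosing sections onto kernels and cokernels).
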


\begin{definition}[\cite{bondal-kapranov}]
Let $\TT$ be a $\Hom$-finite triangulated category.
Then an autoequivalence $S: \TT \to \TT$ is called a Serre functor
if there is a functorial equivalence
\[\Hom(A, B) \simeq \Hom(B, S(A))^\star\]
for $A, B \in \TT$. Here $(-)^\star$ stands for the
vector space duality.
\end{definition}

\begin{lemma}[{\cite[Proposition 3.7]{bondal-kapranov}}]\label{lem:Serre-adm}
If $\TT$ has a Serre functor then for every admissible decomposition
$\TT = \langle \AA_1, \dots, \AA_m \rangle$
each component $\AA_i$ has a Serre functor.
\end{lemma}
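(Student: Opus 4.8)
The plan is to build the Serre functor on each component explicitly out of the Serre functor $S$ of $\TT$ together with the adjoints supplied by admissibility. Since carrying a Serre functor is a property of each component separately, it suffices to fix one admissible subcategory, say $\AA = \AA_i$, with inclusion $\iota\colon \AA \hookrightarrow \TT$. By admissibility $\iota$ has a right adjoint $\iota^!$ and a left adjoint $\iota^*$, and all of $\iota, \iota^!, \iota^*, S, S^{-1}$ are exact. I would then propose
\[
S_\AA := \iota^! \circ S \circ \iota \colon \AA \to \AA
\]
as the candidate Serre functor, with $\iota^* \circ S^{-1} \circ \iota$ as candidate quasi-inverse.

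First I would verify the Serre duality isomorphism for $S_\AA$ by chaining together the adjunction $\iota \dashv \iota^!$, Serre duality on $\TT$, and full faithfulness of $\iota$. For $A, B \in \AA$,
\[
\Hom_\AA(B, S_\AA A) \simeq \Hom_\TT(\iota B, S \iota A) \simeq \Hom_\TT(\iota A, \iota B)^\star \simeq \Hom_\AA(A, B)^\star,
\]
where the middle step is Serre duality on $\TT$ (applied after dualizing, using $\Hom$-finiteness). Dualizing once more gives $\Hom_\AA(A,B) \simeq \Hom_\AA(B, S_\AA A)^\star$, which is exactly the defining property in the convention used above. Functoriality in both arguments is inherited, since the adjunction units/counits, the Serre pairing on $\TT$, and the fully faithful $\iota$ are all natural.

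The main remaining point—and the one I expect to be the genuine obstacle—is to upgrade $S_\AA$ from a functor inducing the correct $\Hom$-space duality to an actual autoequivalence. I would do this by analyzing $S_\AA^{-1} := \iota^* S^{-1} \iota$. A computation parallel to the one above, now using $\iota^* \dashv \iota$ and Serre duality on $\TT$, yields $\Hom_\AA(S_\AA^{-1} A, B) \simeq \Hom_\AA(B, A)^\star$; combining this with the Serre property already established for $S_\AA$ gives a natural isomorphism $\Hom_\AA(S_\AA^{-1} A, B) \simeq \Hom_\AA(A, S_\AA B)$, so that $(S_\AA^{-1}, S_\AA)$ is an adjoint pair. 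Invoking the standard fact that any functor $F$ carrying a natural isomorphism $\Hom(A,B) \simeq \Hom(B, FA)^\star$ is automatically fully faithful, an adjunction between two fully faithful endofunctors has invertible unit and counit, whence $S_\AA$ and $S_\AA^{-1}$ are mutually quasi-inverse. This endows $\AA$ with the required Serre functor, and running the argument for each $\AA_i$ completes the proof.
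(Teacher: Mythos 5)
Your proposal is correct, and it is essentially the proof the paper implicitly relies on: the paper offers no argument of its own, deferring entirely to \cite[Proposition 3.7]{bondal-kapranov}, and your construction $S_{\AA} = \iota^{!}\circ S\circ\iota$ (with $\iota^{*}\circ S^{-1}\circ\iota$ as candidate inverse), verified via the adjunctions, Serre duality on $\TT$, full faithfulness of functors satisfying the Serre duality property, and the fact that an adjunction between fully faithful functors has invertible unit and counit, is exactly the standard argument behind that cited result.
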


\subsection{Gorenstein varieties and algebras}\label{subsection:Gorenstein}

\begin{definition}
A two-sided noetherian ring $R$ satisfying $\injdim _{R}\!R < \infty$ and 
$\injdim R_{R} < \infty$ is called Gorenstein. 
A variety $X$ is called Gorenstein if all its local rings are Gorenstein.
\end{definition}

Gorenstein property is preserved under regular embeddings, projective bundles
and blow ups with locally complete intersection centers.

Let $X$ be a projective variety and 
let $\omega_X^{\bullet} \in \Db(X)$ 
denote the dualizing complex $p^{!}(k)$ of $X$, where $p:X\to\Spec(k)$ (for the definition of $p^{!}$, see \cite{residues-duality}, and
$\omega_X^\bullet$ is bounded by \cite[Subchapter V.2]{residues-duality}). 
It is well-known that 
a connected projective variety
$X$ is Gorenstein if and only if $\omega_X^{\bullet}$ is a shift of a line bundle \cite[Proposition V.9.3]{residues-duality}.
Let $X$ be Gorenstein and connected, and consider 
$S_X(-)= (-)\otimes\omega_X [\dim(X)]$,
the Serre functor on $\Dperf(X)$. 
By abuse of notation, we also write $S_X$ for the autoequivalence on $\Db(X)$ defined by the same formula. 
$S_X$ is not a Serre functor on $\Db(X)$, however
by the Grothendieck-Verdier duality there is a weaker statement: for all $E$ in $\Dperf(X)$ and all $F$ in $\Db(X)$ (resp. for all $E$ in $\Db(X)$ and all $F$ in $\Dperf(X)$), we have
\begin{equation}\label{eq:serre-autoequiv}
    \Hom_{\Db(X)}(E , F)\simeq\Hom_{\Db(X)}(F , S_X(E))^{\star} .
\end{equation}
This isomorphism typically fails when neither $E$ nor $F$ are perfect, for instance
it always fails for structure sheaves of singular points.

The homological meaning of the Gorenstein condition is the following result:

\begin{lemma}\label{lem:Gor-Serre}
A projective variety $X$ (resp. finite-dimensional algebra $R$) 
is Gorenstein if and only if $\Dperf(X)$ (resp. $\Dperf(R)$)
has a Serre functor.
\end{lemma}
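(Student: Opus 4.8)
The plan is to treat the variety and algebra statements in parallel, with the dualizing complex $\omega_X^\bullet$ playing for $X$ the role that the $k$-linear dual bimodule $DR := \Hom_k(R,k)$ plays for $R$. In both settings there is a canonical candidate for the Serre functor — namely $T_X := (-)\otimes\omega_X^\bullet$ on $\Db(X)$, and the Nakayama functor $\nu_R := DR\otimes_R^{\mathbf L}(-)$ on $\Db(R)$ — which satisfies a Serre-type duality against all of $\Db$, but which a priori only lands in $\Db$, not in $\Dperf$. Concretely, Grothendieck--Verdier duality for the projection $p\colon X\to\Spec(k)$ yields a functorial isomorphism $\Hom_{\Db(X)}(E,F)\simeq\Hom_{\Db(X)}(F,E\otimes\omega_X^\bullet)^{\star}$ for $E\in\Dperf(X)$ and $F\in\Db(X)$, and likewise the Nakayama functor gives $\Hom_{\Db(R)}(P,M)\simeq\Hom_{\Db(R)}(M,\nu_R P)^{\star}$ for $P\in\Dperf(R)$ and $M\in\Db(R)$. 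The whole content of the lemma is then that $T_X$ (resp. $\nu_R$) restricts to an autoequivalence of $\Dperf$ precisely when the dualizing object is perfect, and that perfectness of the dualizing object is equivalent to the Gorenstein condition.

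For the ``only if'' direction I would argue as follows. If $X$ is Gorenstein then $\omega_X^\bullet$ is perfect — for connected $X$ it is a shift of a line bundle by \cite[Proposition V.9.3]{residues-duality}, and in general the Gorenstein property is local — so $T_X$ preserves $\Dperf(X)$, is an autoequivalence with inverse $(-)\otimes(\omega_X^\bullet)^{\vee}$, and the displayed duality is exactly the Serre property; for connected $X$ this specialises to \eqref{eq:serre-autoequiv}. The algebra case is identical: $R$ Gorenstein means $\injdim{}_RR<\infty$ and $\injdim R_R<\infty$, and since the $k$-duality $D$ interchanges finitely generated projective and injective modules, this is equivalent to $DR$ having finite projective dimension on both sides, i.e. to $DR\in\Dperf(R)$; then $\nu_R$ is an autoequivalence of $\Dperf(R)$ and the duality above is the Serre property.

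For the converse I cannot simply invoke uniqueness of Serre functors, because the candidate $T_X$ is not yet known to preserve $\Dperf(X)$; instead I would run a Yoneda comparison inside $\Db(X)$. Suppose $\Dperf(X)$ has a Serre functor $S$. Restricting the Grothendieck--Verdier isomorphism to $F\in\Dperf(X)$ and combining it with the defining property of $S$ gives, for each $E\in\Dperf(X)$, a functorial isomorphism $\Hom_{\Db(X)}(F,SE)\simeq\Hom_{\Db(X)}(F,E\otimes\omega_X^\bullet)$ for all $F\in\Dperf(X)$. Transporting $\mathrm{id}_{SE}$ produces a morphism $\phi\colon SE\to E\otimes\omega_X^\bullet$ in $\Db(X)$ so that postcomposition with $\phi$ realises these isomorphisms; completing $\phi$ to a triangle $SE\to E\otimes\omega_X^\bullet\to C\to SE[1]$ then gives $\Hom_{\Db(X)}(F,C[i])=0$ for all $F\in\Dperf(X)$ and all $i$. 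Taking $F=\OO_X(-n)$ this reads $H^i(X,C(n))=0$ for all $n$ and $i$, which forces $C=0$ by Serre vanishing; hence $SE\simeq E\otimes\omega_X^\bullet$. Setting $E=\OO_X$ gives $\omega_X^\bullet\simeq S(\OO_X)\in\Dperf(X)$, so $\omega_X^\bullet$ is perfect and $X$ is Gorenstein. The algebra case is the same argument with the free module $R$ in place of $\OO_X(-n)$: here $\Hom_{\Db(R)}(R,C[i])=H^i(C)$ detects acyclicity directly, and one concludes $DR\simeq S(R)\in\Dperf(R)$, whence $R$ is Gorenstein.

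The step I expect to be the main obstacle is the converse, and within it the Yoneda comparison: the care needed is precisely that the candidate Serre functor is only known to take values in $\Db$, so one must manufacture an honest morphism in $\Db$ and kill its cone using that perfect line bundles (resp. the free module) generate, rather than appealing to uniqueness of Serre functors on $\Dperf$. The remaining inputs — Grothendieck--Verdier duality, the Nakayama duality, and the module-theoretic fact that perfectness of the dualizing object is equivalent to the Gorenstein property — are standard and citable.
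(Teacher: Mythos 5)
Your proposal is correct, and its core mechanism for the variety case is the same as the paper's: combine Grothendieck--Verdier duality with the Serre functor property to get a functorial isomorphism of $\Hom$-functors restricted to $\Dperf(X)$, produce the canonical comparison map by Yoneda, and kill its cone by a generation argument, concluding $\omega_X^\bullet \simeq S(\OO_X)$ is perfect. The differences are worth recording. First, to kill the cone the paper invokes Bondal--Van den Bergh's theorem that $\DD(\Qcoh(X))$ has a perfect generator, so that $\Dperf(X)^{\perp} = 0$; you instead test against the line bundles $\OO_X(-n)$ and use Serre vanishing plus global generation, which is more elementary and self-contained (though note the paper's route avoids even choosing an ample bundle). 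Second, the paper works directly with $E = \OO_X$ and the single map $f: S(\OO_X) \to \omega_X^\bullet$, whereas you build $\phi: SE \to E \otimes \omega_X^\bullet$ for every perfect $E$ and then specialize --- harmless, just slightly more than needed. Third, for the algebra case the paper simply cites Chen \cite{chen} (going back to Happel \cite{happel}), while you run the identical argument with the Nakayama functor $DR \otimes_R^{\mathbf L}(-)$ and the free module $R$ as the test object; this buys a uniform, self-contained proof of both halves of the lemma, at the cost of re-deriving a known result. The one step you treat as a citable black box --- that perfectness of $\omega_X^\bullet$ implies $X$ is Gorenstein --- is the step the paper actually spells out: it uses $\RR\HH om(\omega_X^\bullet,\omega_X^\bullet) \simeq \OO_X$ together with perfectness to conclude $\omega_X^\bullet$ is a shift of a line bundle on each connected component, citing \cite[Lemma V.3.3]{residues-duality}; you should attach that (or an equivalent local-algebra argument that a perfect complex with derived endomorphism ring $\OO_{X,x}$ is a shifted free module of rank one) to close the argument, but this is a presentational gap, not a mathematical one.
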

\begin{proof}
For finite-dimensional algebras this
is a result of Chen \cite[Corollary 3.9]{chen}, which goes back to Happel \cite[Section 3.6]{happel}.

For varieties the ``only if" direction is clear. 
For the ``if" direction, let us denote by $S$ the Serre functor on 
$\Dperf(X)$ and let $\omega_X^{\bullet}$ be as above. By the definition of the Serre functor $S$ and 
by Grothendieck-Verdier duality, we have a functorial
isomorphism
\begin{equation}\label{eqn:Serre-Grothendieck-Verdier}
  \Hom(E,\omega_X^{\bullet})\simeq\Hom(E, S(\OO_X))  
\end{equation}
for $E\in\Dperf(X)$. In particular we obtain a canonical map $f:S(\OO_X)\to\omega_X^{\bullet}$ corresponding to the identity morphism of $\OO_X$. 
Let $C$ be the cone of $f$. 
By $\eqref{eqn:Serre-Grothendieck-Verdier}$ we see that 
$\Hom(E,C)=0$ for all $E\in \Dperf(X)$. 
By \cite[Theorem 3.1.1 (2)]{bondal-van-den-bergh} 
the unbounded derived
category of quasicoherent
sheaves on $X$
has a perfect generator,
in particular 
the right orthogonal
$\Dperf(X)^{\perp}$ 
is trivial 
(considered either in
the unbounded
derived categories
of quasicoherent sheaves,
or in $\Db(X)$), 
and thus $C=0$. In other words, $S(\OO_X)\simeq\omega_X^{\bullet}$. In particular the dualizing complex of $X$ is perfect. Since 
${\omega_X^\bullet}^\vee \otimes \omega_X^\bullet \simeq \RR\HH om (\omega_X^{\bullet},\omega_X^{\bullet})\simeq\OO_X$
by the definition of a dualizing complex,
it is easy to deduce that $\omega_X^{\bullet}$ is a shift of a line bundle on the connected components of $X$ (see e.g. \cite[Lemma V.3.3]{residues-duality}).
Equivalently, $X$ is Gorenstein.
\end{proof}

In the Gorenstein case one can mutate semiorthogonal decompositions as follows:

\begin{lemma}\label{lem:Gor-admiss}
Let $X$ be a Gorenstein projective variety. If $\Db(X) = \langle \AA, \BB \rangle$, and
either $\AA$ or $\BB$ is contained in $\Dperf(X)$, then
both $\AA$ and $\BB$ are admissible and there is a semiorthogonal
decomposition $\Db(X) = \langle \BB \otimes \omega_X, \AA \rangle$.
\end{lemma}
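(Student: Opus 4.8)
The plan is to prove this lemma by combining the saturatedness machinery developed earlier in the section with the Serre-duality autoequivalence $S_X$ on $\Db(X)$. The key difficulty is that $\Db(X)$ is \emph{not} of finite type when $X$ is singular, so I cannot directly invoke Proposition \ref{lem:saturated-implies-admissible} to pass from saturatedness to admissibility. The resolution is to use the perfect subcategory as a bridge: the hypothesis that one of the components lies in $\Dperf(X)$ is precisely what lets me restrict attention to $\Dperf(X)$, which \emph{is} of finite type.

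First I would treat the case $\BB \subset \Dperf(X)$ (the case $\AA \subset \Dperf(X)$ is symmetric, obtained by passing to adjoints or by the analogous argument). Since $\Db(X)$ is saturated by the Rouquier--Neeman theorem, and $\BB$ is a component of a semiorthogonal decomposition, $\BB$ is right admissible, hence saturated by Lemma \ref{lem:admissible-implies-saturated}. Because $\BB \subset \Dperf(X)$ and $\Dperf(X)$ is of finite type, $\BB$ is itself of finite type; being saturated and sitting inside the finite-type category $\Dperf(X)$, Proposition \ref{lem:saturated-implies-admissible} shows $\BB$ is admissible in $\Dperf(X)$, and a fortiori I can conclude $\BB$ is admissible in $\Db(X)$. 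This gives admissibility of $\BB$, and then $\AA = {}^{\perp}\BB$ is automatically admissible as the orthogonal complement in a decomposition with an admissible piece.

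Next I would produce the mutated decomposition $\Db(X) = \langle \BB \otimes \omega_X, \AA \rangle$. The natural mechanism is the Serre-type autoequivalence $S_X = (-) \otimes \omega_X[\dim X]$ on $\Db(X)$, together with the duality \eqref{eq:serre-autoequiv}. The standard mutation argument says that from $\Db(X) = \langle \AA, \BB \rangle$ one obtains $\Db(X) = \langle \BB', \AA \rangle$ where $\BB' = {}^{\perp}\AA$ is the left mutation of $\BB$ through $\AA$. The content is to identify $\BB'$ with $S_X(\BB)$, equivalently $\BB \otimes \omega_X$ up to the shift, which I may absorb since $\BB$ is a triangulated subcategory. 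Concretely, using that $\BB \subset \Dperf(X)$ so that \eqref{eq:serre-autoequiv} applies with $E \in \BB$ perfect, one checks that for $B \in \BB$ and $A \in \AA$ the relation $\Hom(A, B) \simeq \Hom(B, S_X(A))^\star$ translates the right-orthogonality $\AA \subset \BB^\perp$ of the original decomposition into the semiorthogonality of $\langle S_X(\BB), \AA\rangle$; applying $S_X$ to the whole picture and noting it is an autoequivalence preserving the generation property completes the identification.

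\textbf{The main obstacle} I anticipate is the finite-type subtlety in the admissibility step: one must be careful that Proposition \ref{lem:saturated-implies-admissible} is applied inside $\Dperf(X)$ rather than $\Db(X)$, and then argue that admissibility of $\BB$ in the subcategory $\Dperf(X)$ upgrades to admissibility in the ambient $\Db(X)$. Making this upgrade rigorous requires knowing how the adjoints interact with the inclusion $\Dperf(X) \subset \Db(X)$; the cleanest route is probably to use \eqref{eq:serre-autoequiv} directly to construct the adjoint functors on $\Db(X)$ by hand from the representing objects, exploiting that the relevant objects of $\BB$ are perfect. The mutation-versus-$S_X$ identification is standard once admissibility is in place, so I expect it to be routine modulo bookkeeping of the shift $[\dim X]$.
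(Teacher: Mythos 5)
Your overall strategy coincides with the paper's: both routes use saturatedness of $\Db(X)$ (Rouquier--Neeman), Lemma \ref{lem:admissible-implies-saturated}, Proposition \ref{lem:saturated-implies-admissible} applied inside the finite-type category $\Dperf(X)$, and the partial Serre duality \eqref{eq:serre-autoequiv} to produce the mutation $\langle \BB\otimes\omega_X, \AA\rangle$. However, as written your argument has a genuine gap at its central step: ``admissible in $\Dperf(X)$, and a fortiori admissible in $\Db(X)$'' is false in general. Admissibility is a property of the embedding, not of the abstract subcategory, and it does not pass from $\Dperf(X)$ up to $\Db(X)$: for instance $\Dperf(X)$ is trivially admissible in itself, yet it is never admissible in $\Db(X)$ for singular $X$, since its right orthogonal in $\Db(X)$ vanishes (as used in the proof of Lemma \ref{lem:Gor-Serre}), so right admissibility would force $\Dperf(X)=\Db(X)$. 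You do flag this issue in your final paragraph and point at the correct tool, but the fix is never carried out, and it is the entire content of the lemma. The paper's execution is: admissibility of the component in $\Dperf(X)$ is used only to equip it with a Serre functor (Lemma \ref{lem:Serre-adm}); then the missing adjoint on all of $\Db(X)$ is written down explicitly --- in the case $\AA\subset\Dperf(X)$, the right adjoint of $I:\AA\to\Db(X)$ is $R=S_\AA\circ L\circ S_X^{-1}$, where $L$ is the left adjoint coming from the original decomposition --- and adjointness is verified with \eqref{eq:serre-autoequiv}, which applies precisely because objects of the component are perfect. Your alternative of invoking saturatedness of the component pointwise (representing the functor $B\mapsto\RHom(T,B)$, which lands in $\Db(k)$ because one argument is perfect) would also work, but you must actually perform this construction and verify the adjunction; the proposal stops at announcing it.

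There is a second unjustified step: ``then $\AA$ is automatically admissible as the orthogonal complement in a decomposition with an admissible piece'' (also the perps are off: in $\Db(X)=\langle\AA,\BB\rangle$ one has $\AA=\BB^\perp$ and $\BB={}^\perp\AA$, not the other way around). Even granting $\BB$ admissible in $\Db(X)$, you obtain the decompositions $\langle\AA,\BB\rangle$ and $\langle\BB,{}^\perp\BB\rangle$, but right admissibility of $\AA$ --- equivalently, the decomposition $\langle\AA^\perp,\AA\rangle$ --- does not follow formally, because ${}^\perp\BB$ and $\BB^\perp$ are different subcategories. In the paper this is resolved in the same stroke as the mutation: once the adjoint formula yields $\Db(X)=\langle\BB\otimes\omega_X,\AA\rangle$, the category $\AA$ is right admissible from this decomposition and left admissible from the original one, while tensoring by $\omega_X^\vee$ gives $\langle\BB,\AA\otimes\omega_X^\vee\rangle$ and hence left admissibility of $\BB$. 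So the mutated decomposition is not ``routine bookkeeping after admissibility''; it is the mechanism by which two-sided admissibility is proved, and your proposed logical order (admissibility first, mutation second) cannot be completed as stated.
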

\begin{proof}
Let $\AA\subset\Dperf(X)$. 
By Corollary 
$\ref{cor:saturated-subcat-Db(X)}$,
$\AA$ is saturated,
so by 
Proposition $\ref{lem:saturated-implies-admissible}$
$\AA$ is admissible in $\Dperf(X)$, hence
by Lemma
\ref{lem:Serre-adm},
$\AA$ has an induced Serre functor $S_\AA$
(alternatively, instead of relying on Lemma
\ref{lem:Serre-adm}
we can deduce existence
of the Serre functor on $\AA$ using \cite[Corollary 3.5]{bondal-kapranov} immediately
from $\AA$ being saturated and of finite type).

We can define the right adjoint of $I:\AA\to\Db(X)$ using the following standard construction of \cite{bondal-kapranov}. Let $L:\Db(X)\to\AA$ be the left adjoint of $I$ and define the functor $R:\Db(X)\to \AA$ by the formula $R=S_{\AA}\circ L\circ S_X^{-1}$. It follows
from definitions and \eqref{eq:serre-autoequiv} that $R$ is right adjoint to $I$, and that there is a semiorthogonal decomposition 
\[
\Db(X)=\langle \BB\otimes\omega_X ,\AA\rangle .
\]
Since $\omega_X$ is a line bundle, we obtain also $\langle\BB\otimes\omega_X,\AA \rangle\simeq\langle \BB , \AA\otimes\omega_X^{\vee} \rangle $ and hence $\BB\hookrightarrow\Db(X)$ is admissible as well.

The case $\BB\subset\Dperf(X)$ can be proven similarly. 
\end{proof}

\subsection{Singularity categories}

We recall standard facts about singularity categories.
The basic references for these results are \cite{buchweitz,orlov-sing-1}.
Let $X$ be $k$-scheme satisfying Orlov's ELF condition \cite{orlov-sing-1};
in particular
we can take $X$
to be a quasi-projective variety, or
the $\Spec$ of
a completion
of a local ring
for a point in a variety.
For every closed
$Z \subset X$
the triangulated category of singularities of $X$ supported at $Z$
is 
the Verdier quotient
\[
\Dsg_Z(X)=\Db_Z(X) /\Dperf_Z(X).
\]
We write $\Dsg(X)$ for $\Dsg_X(X)$.
If $R$ is a ring, then we define its singularity category by the same formula
$\Dsg(R) = \Db(R) / \Dperf(R)$.

Let us denote by $\ol{\Dsg(X)}$ the idempotent completion of $\Dsg(X)$. 
As we will see in the next section, idempotent completeness of $\Dsg(X)$
is controlled by the first negative $\Kt$-theory group of $X$.

The following is an important property of the singularity category, called Kn\"orrer periodicity.

\begin{theorem}[{\cite[Theorem 2.1]{orlov-sing-1}}]\label{Thm:Knorrer} 
Let $X$ be regular and let $f:X\to\A^1$ be a non-zero morphism. 
Define $g=f+xy:X\times\A^2\to\A^1$.
Let $Z_f=f^{-1}(\{0\})$ and $Z_g=g^{-1}(\left\lbrace 0\right\rbrace )$.
Then we have a canonical equivalence
$$\Dsg(Z_f) \simeq \Dsg(Z_g).$$
\end{theorem}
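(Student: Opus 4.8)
The plan is to pass through matrix factorizations, where Knörrer periodicity becomes an essentially algebraic statement. Recall the fundamental theorem of Buchweitz and Orlov \cite{buchweitz, orlov-sing-1}: since $X$ is regular, the singularity category of the hypersurface $Z_f$ is equivalent to the homotopy category $\mathrm{MF}(X,f)$ of matrix factorizations of $f$, i.e.\ $\Z/2$-graded pairs $P_1 \xrightarrow{\phi_1} P_0 \xrightarrow{\phi_0} P_1$ of locally free sheaves on $X$ with $\phi_0\phi_1 = f\cdot\id$ and $\phi_1\phi_0 = f\cdot\id$. First I would use this to reidentify both sides of the claim as $\Dsg(Z_f)\simeq \mathrm{MF}(X,f)$ and $\Dsg(Z_g)\simeq \mathrm{MF}(X\times\A^2, g)$. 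This turns the comparison of two Verdier quotients into a comparison of two explicit $\Z/2$-graded categories, which is far more tractable.

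Next I would build the comparison functor by tensoring with the canonical rank-one factorization of $xy$. Let $K$ be the matrix factorization $\OO_{\A^2}\xrightarrow{x}\OO_{\A^2}\xrightarrow{y}\OO_{\A^2}$ of $xy$ on $\A^2$; under the equivalence above it corresponds to the non-free generator $\OO_{\A^2}/(x)$ of the singularity category of the node $\{xy=0\}$. Matrix factorizations carry an external tensor product $\mathrm{MF}(X,f)\times\mathrm{MF}(\A^2, xy)\to\mathrm{MF}(X\times\A^2, f+xy)$ refining the additivity of potentials, and I define $\Phi=(-)\boxtimes K$. Concretely, a factorization $(\phi_0,\phi_1)$ of $f$ is sent to the Knörrer block factorization of $g=f+xy$ with matrices such as $\begin{pmatrix}\phi_1 & x\cdot\id\\ y\cdot\id & -\phi_0\end{pmatrix}$ and its partner $\begin{pmatrix}\phi_0 & x\cdot\id\\ y\cdot\id & -\phi_1\end{pmatrix}$; checking that the two composites equal $(f+xy)\cdot\id$ is a direct computation that I would not belabor.

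The heart of the proof is showing that $\Phi$ is an equivalence, and this is where I expect the real work to be. For full faithfulness I would establish a Künneth-type isomorphism for stable $\Hom$-complexes of external products, together with the computation that $\underline{\Hom}_{\mathrm{MF}(\A^2,xy)}(K,K)\simeq k$ concentrated in even degree; informally $K$ is a ``spherical'' generator of the node, so tensoring with it preserves morphism spaces. For essential surjectivity I would note that $\Phi$ is exact, hence its image is a triangulated subcategory, and then argue by a Gaussian-elimination reduction to normal form that every matrix factorization of $g$ is homotopy equivalent to one in which the variables $x,y$ enter only through the standard hyperbolic block, so that it lies in the image of $\Phi$; here regularity of $X$ guarantees that the relevant syzygies are locally free. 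The main obstacle is precisely this last structural step --- controlling an arbitrary factorization of $f+xy$ and reducing it to block form in the global, non-affine setting --- rather than the bookkeeping of the block construction itself.
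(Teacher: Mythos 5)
The paper does not prove this theorem: it is quoted with a citation to Orlov, and the proof being referenced is geometric. Orlov's equivalence is the pull--push functor $i_*p^*$ along the correspondence $Z_f \xleftarrow{\ p\ } Z_f\times\A^1 \xrightarrow{\ i\ } Z_g$, where $i$ identifies $Z_f\times\A^1$ with the Cartier divisor $\{x=0\}\subset Z_g$; since $p$ is flat and $i$ is a regular closed embedding, both functors preserve perfect complexes, descend to singularity categories, and the composite is shown directly to be an equivalence for an arbitrary regular $X$. Under the cokernel dictionary your functor $(-)\boxtimes K$ agrees with this one, so you have written down the correct functor; the gaps lie in the framework you propose for proving that it is an equivalence.

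The serious gap is your first step. The identification $\Dsg(Z_f)\simeq \mathrm{MF}(X,f)$ with the naive homotopy category of locally free matrix factorizations is a theorem only for affine $X$ (Buchweitz, Eisenbud, Orlov). In the statement at hand $X$ is an arbitrary regular scheme (in this paper: ELF, e.g.\ quasi-projective), and for non-affine $X$ this identification fails in general: there exist matrix factorizations that are locally contractible but not contractible, the cokernel functor $\mathrm{MF}(X,f)\to\Dsg(Z_f)$ annihilates them, and it is not an equivalence. Repairing this requires Positselski's coderived categories of factorizations or Orlov's later quotient by locally contractible factorizations (``Matrix factorizations for non-affine Landau--Ginzburg models''), machinery that is both harder than and historically posterior to the theorem you are proving. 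The other two steps have the same local-to-global defect: the K\"unneth isomorphism for stable $\Hom$'s (Thom--Sebastiani for matrix factorizations) is only available under completion or support/properness hypotheses on the critical loci (Dyckerhoff, Preygel, Polishchuk--Vaintrob), and the ``Gaussian elimination to the hyperbolic block'' argument for essential surjectivity is Kn\"orrer's original argument, which uses that the base ring is complete local so that the needed idempotents and splittings exist; over a global $X$, or even over a non-complete affine ring, no such normal-form reduction is available --- and this is exactly the step you flag as the main obstacle without resolving it. Your outline can be completed in the complete local setting (which, incidentally, is the only setting in which this paper applies the theorem), but as a proof of the statement for general regular $X$ it does not go through, whereas the cited geometric argument does.
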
 

The following result goes back to Auslander.

\begin{proposition}\label{prop:CY}
If $X$ is $n$-dimensional Gorenstein variety 
with only isolated singularities,
then $\Dsg(X)$ is a Calabi-Yau-$(n-1)$ category,
that is $[n-1]$ is its Serre functor, that is $\Dsg(X)$ is $\Hom$-finite
and
for every two objects $E, F \in \Dsg(X)$
we have a functorial isomorphism
\[
\Hom(E, F) \simeq \Hom(F, E[n-1])^{\star}.
\]
\end{proposition}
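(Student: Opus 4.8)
The plan is to reduce the statement to a local question at each singular point and then invoke the classical theory of maximal Cohen-Macaulay modules going back to Auslander. First I would exploit that $X$ has only isolated singularities: away from $\Sing(X)$ one has $\Db = \Dperf$, so by Orlov's localization theory for singularity categories the $\Hom$-spaces localize, giving for $E, F \in \Dsg(X)$ a functorial decomposition
\[
\Hom_{\Dsg(X)}(E, F) \simeq \bigoplus_{p \in \Sing(X)} \Hom_{\Dsg(\OO_{X,p})}(E_p, F_p),
\]
compatible with shifts, and unchanged under completion. Since both $\Hom$-finiteness and a Serre functor of the form $[n-1]$ can be checked summand by summand, this reduces us to a complete local Gorenstein ring $R$ of Krull dimension $n$ with an isolated singularity. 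I would also record here why no $\omega_X$ twist survives: the Serre functor $S_X = (-)\otimes\omega_X[n]$ of $\Dperf(X)$ involves the line bundle $\omega_X$, but $\omega_X$ restricts to a free module on each formal neighborhood $\wh{\OO}_{X,p}$, so $(-)\otimes\omega_X$ is isomorphic to the identity on every local summand $\Dsg(\wh{\OO}_{X,p})$, leaving only the shift.

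Next I would pass to a module-theoretic model. By Buchweitz's theorem, for the Gorenstein ring $R$ there is a triangle equivalence $\Dsg(R) \simeq \ul{\MCM}(R)$ with the stable category of maximal Cohen-Macaulay $R$-modules, under which $[1]$ corresponds to the cosyzygy $\Omega^{-1}$ and $\Hom_{\Dsg(R)}(M, N[i]) \simeq \wh{\Ext}^i_R(M,N)$ is computed by stable (Tate) cohomology. $\Hom$-finiteness is then forced by the isolated singularity hypothesis: MCM modules are locally free on the punctured spectrum, so the stable groups $\ul{\Hom}_R(M,N)$, and all $\wh{\Ext}^i_R(M,N)$, are $R$-modules of finite length and hence finite-dimensional over $k$.

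The heart of the argument, and the step I expect to be the main obstacle, is the Calabi-Yau duality itself, namely Auslander-Reiten duality for $\ul{\MCM}(R)$. I would produce the functorial perfect pairing
\[
\Hom_{\Dsg(R)}(E,F) \simeq \Hom_{\Dsg(R)}(F, E[n-1])^{\star}
\]
from Grothendieck local duality over $R$: combining the vanishing of stable maps on the punctured spectrum with local duality (using $\omega_R \simeq R$ in the Gorenstein case) rewrites $\ul{\Hom}_R(M,N)$ as the $k$-dual of a stable Ext group in the opposite variable, and careful bookkeeping of the dualizing shift over an $n$-dimensional Gorenstein ring yields exactly degree $n-1$. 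The delicate points will be the functoriality of this isomorphism and the precise tracking of the shift; the exponent $n-1$ (one less than the Krull dimension, reflecting that the Verdier quotient removes the top-degree contribution) is the content one must extract, and I would check it against the known small cases (nodes in dimension one, Du Val surfaces, the conifold) as a sanity test. Reassembling the local summands then transports the pairing back to $\Dsg(X)$, and together with $\Hom$-finiteness this exhibits $[n-1]$ as a genuine Serre functor, proving that $\Dsg(X)$ is Calabi-Yau of dimension $n-1$.
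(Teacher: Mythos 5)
Your proposal is correct, and its mathematical core coincides with the paper's: both arguments ultimately rest on Buchweitz's equivalence $\Dsg(R)\simeq\ul{\MCM}(R)$ for Gorenstein rings combined with Auslander--Reiten duality for MCM modules over an isolated Gorenstein singularity, which is precisely the Calabi--Yau shift $[n-1]$ (the paper simply cites Auslander's theorem and Buchweitz, whereas you propose to re-derive AR duality from Grothendieck local duality; that is doable but is a known theorem you could just cite, as the paper also does via Iyama--Wemyss). The genuine difference is the reduction step. The paper removes a hyperplane section disjoint from $\Sing(X)$ to obtain an affine open $U\supset\Sing(X)$ with $\Dsg(U)\simeq\Dsg(X)$ by Orlov's localization, and then works with a single affine coordinate ring; this is shorter and sidesteps completion issues. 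You instead pass to the complete local rings $\wh{\OO}_{X,p}$ via the fully faithful dense functor $\Dsg(X)\to\bigoplus_p\Dsg(\wh{\OO}_{X,p})$ --- the same mechanism the paper deploys later for its $\Ksg_0$ computations. This works, but one point needs care in the write-up: $\Dsg(X)$ itself does \emph{not} split as a direct sum over the singular points, since it is in general not idempotent complete (that failure is exactly $\Kt_{-1}(X)$, the central theme of the paper); the decomposition must therefore be phrased, as you did, at the level of $\Hom$-spaces along a fully faithful embedding, with the Serre-duality isomorphism obtained by restricting the bifunctorial pairing on $\bigoplus_p\Dsg(\wh{\OO}_{X,p})$ to the image. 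Your route buys a transparent explanation of why the $\omega_X$-twist disappears and an elementary finite-length proof of $\Hom$-finiteness (which the paper instead cites from Orlov); the paper's route buys brevity.
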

\begin{proof}
Since $X$ is Gorenstein
with isolated singularities,
$\Dsg(X)$ is $\Hom$-finite
by \cite[Corollary 1.24]{orlov-sing-1}.
Let us prove the Calabi-Yau property.
We can assume that $X$ is affine.
Indeed, removing a hyperplane section disjoint from the singular
locus, we obtain an open affine subvariety $U \subset X$
containing the singular locus,
and by \cite[Proposition 1.14]{orlov-sing-1} $\Dsg(U) \simeq \Dsg(X)$.
In this case the desired statement is  
a result of Auslander \cite[Theorem 3.1]{auslander} combined with Buchweitz' interpretation of the singularity
category \cite[Theorem 4.4.1 (2)]{buchweitz}
(see \cite[Theorem 1.4]{iyama-wemyss2} for a more
general statement).
\end{proof}

\begin{example}\label{ex:CY0}
If $Q$ is an affine nodal $n$-dimensional quadric (that is an ordinary double
point of dimension $n$), then by Kn\"orrer periodicity
Theorem \ref{Thm:Knorrer}, $\Dsg(Q)$
only depends on the parity of $n$.
In particular, let us assume $n \equiv 1 \pmod{2}$, so that
\[
\Dsg(Q) \simeq \Dsg(A),
\]
where $A = k[x,y]/(xy)$.
By Proposition \ref{prop:CY},
$\Dsg(Q)$
is a Calabi-Yau-$0$ category, that is 
$\Hom(E,F) \simeq \Hom(F,E)^{\star}$.
In fact $\Dsg(Q)$ is equivalent to the
category of $\Z/2$-graded finite-dimensional
vector spaces, with the shift functor $[1]$
exchanging the graded pieces.
\end{example}

\begin{lemma}[Chen {\cite[Corollary 2.4]{chen-completion}}]\label{lem:algebras-complete}
For any finite dimensional $k$-algebra $R$, $\Dsg(R)$ is idempotent complete.
\end{lemma}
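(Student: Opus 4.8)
The plan is to reduce the statement to the assertion that every idempotent in $\Dsg(R)$ splits, and to use the hypothesis $\dim_k R < \infty$ in an essential way. The natural starting point is that the category $\mod\text{-}R$ of finite-dimensional modules is Krull--Schmidt, since every indecomposable has a local endomorphism ring by Fitting's lemma. Consequently $\Db(R)$ is itself a Krull--Schmidt, and in particular idempotent complete, triangulated category, and it is $\Hom$-finite (for bounded complexes of finitely generated modules each $\Hom$-space is finite-dimensional, even when $R$ has infinite global dimension). Since the Verdier quotient functor $\Db(R) \to \Dsg(R)$ is essentially surjective, any object of $\Dsg(R)$ is represented by some $A \in \Db(R)$, and it suffices to show that an arbitrary idempotent $\bar e \in \End_{\Dsg(R)}(A)$ admits a splitting already inside $\Dsg(R)$.

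I would stress at the outset what makes this nontrivial: Verdier quotients of idempotent complete categories need \emph{not} be idempotent complete, which is exactly the phenomenon the rest of the paper exploits, since $\Dsg(X)$ for a singular variety is in general not idempotent complete. Thus the argument cannot be formal and must genuinely invoke finiteness of $\dim_k R$. The conceptual mechanism I would use is to pass to Krause's compactly generated \emph{stable derived category} $\mathbf{S}(R)$, the homotopy category of unbounded complexes of injective $R$-modules, whose subcategory of compact objects is precisely $\ol{\Dsg(R)}$. There arbitrary coproducts exist, so the telescope $A \xrightarrow{\bar e} A \xrightarrow{\bar e} \cdots$ has a homotopy colimit and $\bar e$ splits off a compact direct summand $B \in \ol{\Dsg(R)}$. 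The content of the lemma is then that $B$ already lies in the essential image of $\Dsg(R)$, rather than only in its idempotent completion.

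The main obstacle, and the place where finite-dimensionality is indispensable, is to descend this splitting from $\ol{\Dsg(R)}$ to $\Dsg(R)$ itself. Here I would argue through $\Db(R)$: because $\Dperf(R) = \mathrm{K}^{b}(\mathrm{proj}\,R)$ is generated by the single object $R$, and because $\Db(R)$ is $\Hom$-finite and Krull--Schmidt, one can trace the idempotent $\bar e$ back, up to perfect complexes, to an honest idempotent endomorphism of an object of $\Db(R)$, which then splits upstairs by Krull--Schmidt; one must check that the resulting decomposition is compatible with the quotient functor and hence furnishes the desired splitting of $\bar e$ in $\Dsg(R)$. This lifting-and-descent step is precisely the technical heart packaged in \cite[Corollary 2.4]{chen-completion}. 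The remaining verifications --- that the relevant $\Hom$-spaces in $\Db(R)$ are finite-dimensional and that $\Dperf(R)$ is compactly generated by $R$ --- are routine consequences of $R$ being a finite-dimensional algebra over the field $k$, and I would treat them only briefly.
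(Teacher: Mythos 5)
The paper offers no argument of its own here: the Lemma is imported from Chen \cite[Corollary 2.4]{chen-completion}, followed by a remark that in the Gorenstein case it also follows from Buchweitz's equivalence $\Dsg(R)\simeq\ul{\MCM}(R)$ together with idempotent completeness of the stable MCM category of a finite-dimensional algebra. Your proposal instead tries to sketch the mechanism, and the sketch has a genuine gap at precisely the decisive step. The preliminaries are correct but do no work: the detour through Krause's stable derived category is unnecessary, since by Balmer--Schlichting the completion $\ol{\Dsg(R)}$ is triangulated and every idempotent of $\Dsg(R)$ splits there by the very construction of the idempotent completion; as you rightly say, the entire content is descending the splitting from $\ol{\Dsg(R)}$ to $\Dsg(R)$. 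But your mechanism for this descent --- ``trace the idempotent back, up to perfect complexes, to an honest idempotent endomorphism of an object of $\Db(R)$'', using Hom-finiteness, the Krull--Schmidt property of $\Db(R)$, and generation of $\Dperf(R)$ by the single object $R$ --- cannot work, because all three ingredients hold verbatim when $R$ is replaced by a projective variety $X$: there $\Db(X)$ is Hom-finite and idempotent complete (Balmer--Schlichting), hence Krull--Schmidt, and $\Dperf(X)$ is classically generated by a single object (Bondal--Van den Bergh); yet $\Dsg(X)$ is frequently \emph{not} idempotent complete --- for a nodal plane cubic $C$ one has $\Kt_{-1}(C)\simeq\Z$ by Corollary \ref{cor:nodal}, so $\Dsg(C)$ is not idempotent complete by Lemma \ref{lem:SES_K_{-1}}. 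Lifting idempotents along a Verdier quotient is exactly what fails in general, so any correct proof must use finite-dimensionality of $R$ in a way your listed ingredients do not capture.

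Where finiteness genuinely enters --- and what the actual proof behind \cite[Corollary 2.4]{chen-completion} does, in essence --- is module-theoretic rather than complex-theoretic: every object of $\Dsg(R)$ is isomorphic to a shifted module $M[n]$, and $\Hom_{\Dsg(R)}(M,M)$ is the filtered colimit of the stable Hom-spaces $\ul{\Hom}(\Omega^i M,\Omega^i M)$ along the syzygy functor, which is an honest functor on the stable module category because minimal projective resolutions exist. Consequently an idempotent of $\Dsg(R)$ becomes an honest idempotent in $\ul{\End}(\Omega^i M)$ for $i\gg 0$, and it splits there because the stable module category $\ul{\mod}\text{-}R$ is Krull--Schmidt (stable endomorphism rings are finite-dimensional, hence semiperfect); the resulting decomposition of $\Omega^i M$ then splits the original idempotent in $\Dsg(R)$. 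None of this machinery (minimal resolutions, functorial syzygies, semiperfect stable endomorphism rings) is available for schemes, which is consistent with the failure of the statement there. Finally, note that as written your argument is also circular as a proof: the step you cannot carry out is declared to be ``the technical heart packaged in'' the very reference whose statement is being proved.
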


In the Gorenstein case, the Lemma above
also follows from the famous result of Buchweitz 
that
$\Dsg(R)$ 
of a Gorenstein ring $R$ is equivalent to the stable category of stable maximal Cohen-Macaulay $R$-modules (also called Gorenstein projectives) $\underline{\mathrm{MCM}}(R)$
\cite[Theorem 4.4.1]{buchweitz},
and the latter category is well-known to be
idempotent complete for finite-dimensional algebras
(see e.g. \cite[Lemma 2.68]{martin-thesis}).

\subsection{Grothendieck groups
and the topological filtration}
We assume that $X$ is an ELF $k$-scheme.
Let $Z \subset X$ be a closed
subscheme.

We define the following Grothendieck groups
of $X$ with supports on $Z$;
the first two are classical, and the last two
are defined and studied in \cite{pavic-shinder}.
We define
\[\bal
\Kt_0(X \on Z) &= \Kt_0(\Dperf_Z(X)) \\
\Gt_0(X \on Z) &= \Kt_0(\Db_Z(X)) \simeq \Gt_0(Z) \\
\Ksg_0(X\text{ on }Z) & = \Kt_0(\Dsg_Z(X)) \\
\KKsg_0(X\text{ on }Z) & = \Kt_0(\ol{\Dsg_Z(X)}) \\
\eal\]
where the the isomorphism in the second line is Quillen's devissage.
The last two groups are called
\emph{singularity Grothendieck groups}.
We write $\Ksg_0(X)$ (resp. $\KKsg_0(X)$) for these groups
when $Z = X$.
Essentially from definitions
(see \cite[Remark 1.13]{pavic-shinder}) 
we get a canonical exact sequence
\begin{equation}\label{eq:KGseq}
\Kt_0(X\text{ on }Z)\to\Gt_0(Z)\to\Ksg_0(X\text{ on }Z)\to 0.
\end{equation}

Let $\Kt_{-1}(X)$ (resp. $\Kt_{-1}(X\text{ on }Z)$)
be the $(-1)$-st $\Kt$-group of $X$ (resp.
of $X$ with supports in $Z$) 
\cite{tt}. We have the following well-known relation between the two singularity Grothendieck
groups defined above,
going back to Thomason \cite{thomason}, Schlichting \cite{schlichting2} and Orlov \cite{orlov-sing-2}.

\begin{lemma}[{\cite[Lemma 1.10, 1.11 and Remark 1.13]{pavic-shinder}}]\label{lem:SES_K_{-1}}
There is a canonical short exact sequence 
\begin{align}\label{SES}
0\to\Ksg_0(X \text{ on } Z)\to\KKsg_0(X \text{ on } Z)\to\Kt_{-1}(X \text{ on } Z)\to 0. 
\end{align}
Moreover, $\Dsg_Z(X)$ is idempotent complete if and only if $\Kt_{-1}(X \text{ on } Z)=0$.
\end{lemma}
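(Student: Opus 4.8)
The plan is to combine three inputs: Thomason's classification of dense triangulated subcategories \cite{thomason}, Schlichting's localization theorem for the negative $\Kt$-theory of triangulated categories \cite{schlichting2}, and the vanishing of negative $\Gt$-theory of a Noetherian scheme. First I would record the consequences of the density theorem. Since $\Dsg_Z(X) \subset \ol{\Dsg_Z(X)}$ is a dense triangulated subcategory, the induced map $\Ksg_0(X \on Z) = \Kt_0(\Dsg_Z(X)) \to \Kt_0(\ol{\Dsg_Z(X)}) = \KKsg_0(X \on Z)$ is injective, its image is the subgroup classifying $\Dsg_Z(X)$, and the full subgroup $\KKsg_0(X \on Z)$ corresponds to the whole category $\ol{\Dsg_Z(X)}$. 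This already yields left exactness in \eqref{SES} and shows that $\Dsg_Z(X)$ is idempotent complete precisely when $\Ksg_0(X \on Z) \to \KKsg_0(X \on Z)$ is surjective.

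Next I would feed the Verdier localization sequence $\Dperf_Z(X) \to \Db_Z(X) \to \Dsg_Z(X)$ into Schlichting's machinery. His localization theorem produces a homotopy fibration of $\Kt$-theory spectra $\Kt(\Dperf_Z(X)) \to \Kt(\Db_Z(X)) \to \Kt(\ol{\Dsg_Z(X)})$, where the idempotent completion appears automatically in the cofibre term, and hence a long exact sequence of groups
\[
\Kt_0(X \on Z) \to \Gt_0(Z) \to \KKsg_0(X \on Z) \xrightarrow{\partial} \Kt_{-1}(X \on Z) \to \Gt_{-1}(Z),
\]
using $\Kt_n(\Dperf_Z(X)) = \Kt_n(X \on Z)$ and $\Kt_n(\Db_Z(X)) = \Gt_n(Z)$ (Quillen devissage). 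The crucial simplification is that negative $\Gt$-theory vanishes: since $\Db_Z(X)$ is the bounded derived category of the Noetherian abelian category of coherent sheaves supported on $Z$, Schlichting's vanishing theorem gives $\Gt_{-1}(Z) = 0$. Consequently $\partial$ is surjective, and its kernel equals the image of $\Gt_0(Z) \to \KKsg_0(X \on Z)$.

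Finally I would identify that image. By functoriality of $\Kt_0$ along the factorization $\Db_Z(X) \to \Dsg_Z(X) \hookrightarrow \ol{\Dsg_Z(X)}$, the map $\Gt_0(Z) \to \KKsg_0(X \on Z)$ is the composite of the surjection $\Gt_0(Z) \twoheadrightarrow \Ksg_0(X \on Z)$ from \eqref{eq:KGseq} with the injection of the first step; hence its image is exactly $\Ksg_0(X \on Z)$. Combined with the surjectivity of $\partial$, this gives $\KKsg_0(X \on Z)/\Ksg_0(X \on Z) \cong \Kt_{-1}(X \on Z)$, which is precisely \eqref{SES}. The idempotent-completeness criterion then follows, since by the first step surjectivity of $\Ksg_0(X \on Z) \to \KKsg_0(X \on Z)$ is equivalent both to $\Kt_{-1}(X \on Z) = 0$ and to $\Dsg_Z(X) = \ol{\Dsg_Z(X)}$. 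The main obstacle is the correct invocation of Schlichting's localization theorem: one must verify that the localization sequence of triangulated categories fits his Frobenius-pair framework so that the quotient term contributes the $\Kt$-theory of its idempotent completion, and then pair this with the negative $\Gt$-theory vanishing to truncate the long exact sequence.
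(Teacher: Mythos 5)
Your argument is correct and takes essentially the same route as the proof this paper relies on (the lemma is quoted from \cite{pavic-shinder} rather than reproved here): Thomason's density theorem for the injectivity of $\Ksg_0(X \on Z)\to\KKsg_0(X \on Z)$ and the idempotent-completeness criterion, Schlichting's localization theorem with the idempotent-completed quotient term, and the vanishing of negative $\Gt$-theory of a Noetherian abelian category are exactly the three ingredients used there. The technical point you flag at the end --- that the sequence $\Dperf_Z(X)\to\Db_Z(X)\to\Dsg_Z(X)$ fits Schlichting's Frobenius-pair framework so that the cofibre computes the $\Kt$-theory of $\ol{\Dsg_Z(X)}$ --- is precisely what the cited lemmas of \cite{pavic-shinder} establish, so there is no gap.
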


We note that all categories
and Grothendieck groups with supports
on $Z$ used above only depend on
the set of points of $Z$ rather than
its scheme structure.

Assume that $X$ is pure, that is
$X$ has no embedded components and 
all irreducible components
of $X$ have the same dimension.
There is a so-called \emph{topological filtration} $F^i\Ksg_0(X)$ on
$\Ksg_0(X)$ induced by the topological filtration on $\Gt_0(X)=\Kt_0(\Db(X))$ (see \cite[Subchapter 1.3]{pavic-shinder}). Recall that $F^i\Ksg_0(X)$ is generated by elements $[\OO_T]$, where $T\subset X$ is a closed subscheme of codimension at least $i$. Let us denote the associated graded groups by $\Gr^i\Ksg_0(X)$. A topological filtration on $\Ksg_0(X\text{ on }Z)$ can be defined in the same way. 
We have the following useful properties of the associated graded groups 
of $\Ksg_0(X)$:

\begin{proposition}[{\cite{pavic-shinder}}]\label{prop:filtration-Ksg1}
\noindent 1) Let $X$ be a purely $n$-dimensional scheme with only isolated singularities
and let $Z \subset X$ be a closed subscheme.
Then there is an isomorphism 
\begin{equation}\label{eqn:relative-Ksg-completion}
\KKsg_0(X\text{ on }Z)\simeq \bigoplus_{p\in\Sing(X)\cap Z}\Ksg_0(\wh{\OO}_{X,p}).    
\end{equation}
Furthermore for all $i \ge 0$ we have
\begin{equation}\label{eq:Fi-relative-completion}
F^i \Ksg_0(X \on Z) \subset
\bigoplus_{p\in\Sing(X)\cap Z}
F^i \Ksg_0(\wh{\OO}_{X,p}),
\end{equation}
and in particular, $F^n\Ksg_0(X\text{ on }Z)=0$.

\noindent 2) Let $C$ be reduced, connected 
purely one-dimensional scheme
with $N$ irreducible components and let $Z\subset C$ be a reduced subscheme of $C$ of dimension one. Denote by $N_Z$ the number of irreducible components of $Z$. Then 
\begin{equation*}
\Ksg_0(C\text{ on }Z) = \Gr^0\Ksg_0(C\text{ on } Z) = \begin{cases}
\Z^{N_Z} & Z\subsetneq C \\
\Z^{N_C -1} & Z = C
\end{cases} ,
\end{equation*}
generated by the structure sheaves of the irreducible components of $Z$.

\noindent 3) If $X$ is normal and irreducible then
$\Gr^1 \Ksg_0(X) \simeq \Cl(X)/\Pic(X)$,
functorially with respect to flat pullbacks.

\noindent 4) The isomorphism $\Ksg_0(Z_f) \simeq \Ksg_0(Z_g)$ induced by
Theorem \ref{Thm:Knorrer} shifts the topological filtration by one, 
that is for all $i \ge 0$ we have natural isomorphisms 
$F^i \Ksg_0(Z_f) \simeq F^{i+1} \Ksg_0(Z_g)$ and $\Gr^i \Ksg_0(Z_f) \simeq \Gr^{i+1} \Ksg_0(Z_g)$. 
\end{proposition}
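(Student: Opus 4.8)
The common engine for all four parts is the defining right-exact sequence \eqref{eq:KGseq}, the injection \eqref{SES}, and the fact that the topological filtration on $\Ksg_0$ is by definition the image of the one on $\Gt_0$; I would exploit throughout that the maps in \eqref{eq:KGseq} are filtered and compute the graded pieces directly. For Part 1, since the singularities are isolated, I would first localize: by Orlov's invariance of the singularity category under shrinking to a neighborhood of the singular locus (cf.\ \cite[Proposition 1.14]{orlov-sing-1}) and additivity over the finitely many points of $\Sing(X)\cap Z$, the idempotent completion $\ol{\Dsg_Z(X)}$ splits as the orthogonal sum $\bigoplus_{p}\ol{\Dsg(\OO_{X,p})}$. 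Passing to completions leaves the idempotent-completed singularity category unchanged, and for a complete local ring the singularity category is already idempotent complete, so $\Kt_0(\ol{\Dsg(\OO_{X,p})})\simeq\Ksg_0(\wh{\OO}_{X,p})$; taking $\Kt_0$ yields \eqref{eqn:relative-Ksg-completion}. The filtration inclusion \eqref{eq:Fi-relative-completion} follows because the localization and completion maps are flat and preserve codimension of supports, hence send $F^i$ into $F^i$, and because $\Ksg_0\hookrightarrow\KKsg_0$ is injective by \eqref{SES}. For the final vanishing I would note that over a local ring $\Kt_0\cong\Z$ is generated by the free module, a class of nonzero rank, so \eqref{eq:KGseq} gives $F^i\Gt_0\cong F^i\Ksg_0$ for $i\ge 1$; combined with $F^n\Gt_0(\wh{\OO}_{X,p})=0$ from \cite[Proposition 1.24]{pavic-shinder} and \eqref{eq:Fi-relative-completion} this gives $F^n\Ksg_0(X\on Z)=0$.

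\textbf{Part 2.} A curve has isolated singularities and $n=1$, so Part 1 already gives $F^1\Ksg_0(C\on Z)=0$, i.e.\ $\Ksg_0=\Gr^0$. I would compute $\Gr^0$ by applying it to \eqref{eq:KGseq}. As $Z$ is reduced of pure dimension one its components are components of $C$, and $\Gr^0\Gt_0(Z)\cong\Z^{N_Z}$ is freely generated by the $[\OO_{Z_i}]$. The class of a perfect complex in $\Gr^0$ is its tuple of generic ranks; since rank is locally constant and $C$ is connected this tuple is constant, equal to $(r,\dots,r)$. When $Z=C$ this contributes precisely the diagonal subgroup generated by $(1,\dots,1)$, giving $\Z^{N_C}/\langle(1,\dots,1)\rangle\cong\Z^{N_C-1}$; when $Z\subsetneq C$ some component of $C$ lies outside $Z$, forcing $r=0$ there and hence everywhere, so the image is trivial and $\Gr^0\Ksg_0\cong\Z^{N_Z}$. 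In both cases the generators are the images of the $[\OO_{Z_i}]$.

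\textbf{Part 3.} Here I would take $\Gr^1$ of \eqref{eq:KGseq}, using the standard identifications $\Gr^1\Gt_0(X)\cong\Cl(X)$ and $\Gr^1\Kt_0(X)\cong\Pic(X)$ (via the first Chern class), under which the map $\Kt_0\to\Gt_0$ induces the inclusion $\Pic(X)\hookrightarrow\Cl(X)$ of Cartier into Weil divisor classes. The image-filtration description gives $\Gr^1\Ksg_0(X)=\Gr^1\Gt_0(X)/\overline{F^1\Gt_0\cap\im(\Kt_0\to\Gt_0)}$, and the only extra input needed is that $\Gr^0\Kt_0(X)\to\Gr^0\Gt_0(X)$ is an isomorphism ($\cong\Z$, as $X$ is irreducible): this forces any $y\in\Kt_0$ with $\im(y)\in F^1\Gt_0$ to already lie in $F^1\Kt_0$, so $F^1\Gt_0\cap\im(\Kt_0)=\im(F^1\Kt_0)$ and the subgroup above is exactly $\Pic(X)\subset\Cl(X)$. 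Hence $\Gr^1\Ksg_0(X)\cong\Cl(X)/\Pic(X)$. Functoriality under flat pullback is inherited from that of all three groups and of the cycle and Chern maps.

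\textbf{Part 4.} This is the step I expect to be hardest, since the shift must be read off the explicit equivalence of \cite[Theorem 2.1]{orlov-sing-1}. The geometric source of the shift is the divisor $W=\{y=0\}\cap Z_g\cong Z_f\times\A^1$, which is cut out by $y$ and hence has codimension one in $Z_g$, together with $\dim Z_g=\dim Z_f+2$. I would realize Orlov's equivalence, up to twist, as the pushforward along $W\hookrightarrow Z_g$ composed with the identification $\Dsg(W)\simeq\Dsg(Z_f)$ coming from the smooth $\A^1$-factor, and then verify on generators that a closed $T\subset Z_f$ of codimension $i$ is sent to (the class of) a subvariety $T\times\A^1\subset W\subset Z_g$ of codimension $i+1$. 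The crux is to match this geometric picture with Orlov's matrix-factorization functor precisely and to check that the equivalence is strictly filtered after the shift, i.e.\ that it sends $F^i\Ksg_0(Z_f)$ onto $F^{i+1}\Ksg_0(Z_g)$ and no further; granting this, the statements for $F^i$ and $\Gr^i$ follow at once.
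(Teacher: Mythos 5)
Your treatment of the splitting in Part 1, of Part 2, and of Part 3 runs essentially parallel to the paper (Part 3 in fact reconstructs the argument of \cite[Proposition 1.24 (2)]{pavic-shinder}, which the paper simply cites and notes extends to ELF schemes), but the final step of your Part 1 has a genuine gap. You invoke \cite[Proposition 1.24]{pavic-shinder} to conclude $F^n\Gt_0(\wh{\OO}_{X,p})=0$ \emph{directly for the complete local ring} $\wh{\OO}_{X,p}$. That proposition is proved for varieties, and its proof uses global geometry: the class of a closed point is killed by moving it along curves inside the variety, which is impossible on $\Spec$ of a complete local ring, whose unique closed point cannot be moved anywhere. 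This is exactly the subtlety the paper works around: it applies \cite[Proposition 1.24 (4)]{pavic-shinder} to $X$ itself to get $F^n\Ksg_0(X)=0$, then uses the surjective restriction map $F^n\Ksg_0(X)\to F^n\Ksg_0(\wh{\OO}_{X,p})$ induced by flat pullback along completion (it sends $[k(x)]$ to $[k]$, which generates the target since $F^n\Gt_0(\wh{\OO}_{X,p})$ is generated by the simple module) to conclude $F^n\Ksg_0(\wh{\OO}_{X,p})=0$, and only then deduces $F^n\Ksg_0(X\on Z)=0$ from \eqref{eq:Fi-relative-completion}. Your argument can be repaired in precisely this way, but as written the citation does not support the claim. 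A smaller issue of the same kind: your assertion that the singularity category of a complete local ring is idempotent complete is true but not free; the paper derives it from Drinfeld's vanishing of $\Kt_{-1}$ for Henselian rings \cite{drinfeld2} together with Lemma \ref{lem:SES_K_{-1}}, and passing from $\OO_{X,p}$ to $\wh{\OO}_{X,p}$ uses Orlov's formal-completion theorem \cite{orlov-sing-2}.

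Part 4 is not yet a proof, as you acknowledge. The half you sketch — a codimension-$i$ subvariety $T\subset Z_f$ is sent to $T\times\A^1\subset\{y=0\}\subset Z_g$ of codimension $i+1$ — only yields $F^i\Ksg_0(Z_f)\subseteq F^{i+1}\Ksg_0(Z_g)$ under the Kn\"orrer isomorphism of Theorem \ref{Thm:Knorrer}. The statement requires the reverse inclusion as well: the inverse equivalence must carry $F^{i+1}$ into $F^i$, equivalently every class of a codimension-$(i+1)$ cycle on $Z_g$ must be expressible in $\Ksg_0(Z_g)$ through cycles of the form $T\times\A^1$. This is the nontrivial direction, it does not follow formally from the forward inclusion, and it is what \cite[Proposition 1.30]{pavic-shinder} — which the paper cites for this part, again with the remark that the quasi-projective proof carries over to ELF schemes — establishes via a blow-up realization of Kn\"orrer periodicity. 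Without that input (or an independent argument controlling the inverse functor), your Part 4 remains an outline of the easy half.
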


\begin{proof}
1) Let us denote by $S$ the singular locus of $X$. We have a well-defined, fully-faithful functor $\Dsg_{Z\cap S}(X)\to\Dsg_Z(X)$ \cite[Lemma 2.6]{orlov-sing-2} and its image is dense in $\Dsg_Z(X)$ by \cite[Proposition 2.7]{orlov-sing-2}. 
In particular, these two categories have the same idempotent completion. Moreover, since $Z\cap S$ is a finite set of closed points, we have 
\[
\ol{\Dsg_{Z}(X)}\simeq\ol{\Dsg_{Z\cap S}(X)}\simeq\bigoplus_{p\in Z\cap S}{\Dsg
(\wh{\OO}_{X,p})},
\]
where we used in the second equivalence that formally isomorphic singularities have the same idempotent complete singularity categories \cite[Theorem 2.10]{orlov-sing-2} and that singularity categories of local Henselian
rings are idempotent complete 
(indeed, complete local rings are Henselian;
Henselian rings have vanishing $\Kt_{-1}$
by \cite[Theorem 3.7] {drinfeld2}
and hence 
Lemma \ref{lem:SES_K_{-1}}
implies that each $\Dsg(\wh{\OO}_{X,p})$
is idempotent complete).
Passing to the Grothendieck group yields \eqref{eqn:relative-Ksg-completion}. 

By \eqref{SES}, we get \eqref{eq:Fi-relative-completion}
for $i = 0$, and then since
flat pullbacks of morphisms preserve the 
topological filtration \cite[Lemma 1.29 (1)]{pavic-shinder},
\eqref{eq:Fi-relative-completion}
follows for all $i \ge 0$.

By definition $F^n \Gt_0(\wh{{\OO}}_{X,p})$
is generated by the simple module $[k]$,
in particular we have a surjective restriction
homomorphism $F^n \Ksg_0(X) \to 
F^n \Ksg_0(\wh{{\OO}}_{X,p})$
(sending $[k(x)]$ to $[k]$)
and by \cite[Proposition 1.24 (4)]{pavic-shinder} we know 
that $F^n \Ksg_0(X) = 0$, so that $F^n \Ksg_0(\wh{{\OO}}_{X,p}) = 0$ and by \eqref{eq:Fi-relative-completion},
$F^n \Ksg_0(X\text{ on }Z)=0$.

2) By 1) we see that $F^1 \Ksg_0(C\text{ on }Z)=0$, or, equivalently, that $\Ksg_0(C\text{ on }Z)\simeq \Gr^0\Ksg_0(C\text{ on }Z)$.
The result now follows using \eqref{eq:KGseq},
since classes of perfect complexes on $X$ with one-dimensional support contained in $Z$ generate $\Z\cdot [\OO_C]$
(resp. have trivial image) in $\Gr^0 \Gt_0(C) = \Z^{N_C}$
for $Z = C$ (resp. $Z \subsetneq C$).

3) For the isomorphism see \cite[Proposition 1.24 (2)]{pavic-shinder}, which works under
the quasi-projective assumption, but
the proof remains unchanged in the ELF scheme case.
Functoriality follows easily by construction.

4) See \cite[Proposition 1.30]{pavic-shinder}; same comment as above regarding
quasi-projective and ELF applies here.
\end{proof}

\subsection{Local geometry of compound $A_n$ singularities}\label{subsec:cAn}

Recall that a threefold $X$ has a \emph{compound $A_n$} (abbreviated as $cA_n$)
\emph{singularity} 
at $p \in X$ if the complete local ring $\wh{\OO}_{X,p}$ is isomorphic to a hypersurface singularity given by the equation
\[
f=xy+z^{n+1} + w h(x,y,z,w),
\]
where $h$ is an arbitrary power series (see \cite[Definition 2.1]{reid}). It is well-known in the isolated singularity case that the equation $f$ can, after a change of coordinates, be expressed as $f=xy+g(z,w)$ for some $g\in (z,w)^2\subset  k[[z,w]]$ (Morse Lemma \cite[Section 11.1]{morse}).
Conversely, any isolated hypersurface given by the equation $f=xy+g(z,w)$ is a $cA_n$ singularity, where $n=\ord(g)-1$ and $\ord(g)$ is the degree of the lowest term of the power series of $g\in (z,w)^2$ \cite[Proposition 6.1 (e)]{burban-iyama-keller-reiten}.
Of particular interest are
 nodal singularities
(also called ordinary double
points)
given complete
locally by $xy + zw = 0$
and more generally ADE singularities,
see table \eqref{tab:ADE}.
Since $cA_n$ singularities are given by one equation
they are automatically Gorenstein.
 
Let $A$ be a complete local ring isomorphic to $k[[x,y,z,w]]/(f)$, where $f\in A$ is of the form $xy + g(z,w)$, for some $g\in k[[z,w]]$. One sees that $A$ has an isolated singularity at the origin if and only if the ring $k[[z,w]]/(g)$, which we denote by $A'$, has an isolated singularity at the origin. The latter condition is equivalent to $g$ being a nonconstant power series with no multiple factors.

Let $\br_{0}(A')$ be the number of irreducible components of $A'$. Here $0$ stands for the closed point $0\in\Spec(A')$. 

\begin{lemma}
We have a chain of equivalences
\begin{equation}\label{Ksg-of-cAn}
  \Z^{\br_{0}(A')-1} \simeq \Ksg_0(A') \simeq \Ksg_0(A) \simeq \Cl(A).
\end{equation}
\end{lemma}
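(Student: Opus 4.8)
The plan is to read \eqref{Ksg-of-cAn} from left to right, using Kn\"orrer periodicity as the bridge between the curve $A'$ and the threefold $A$ and the topological filtration of Proposition \ref{prop:filtration-Ksg1} to keep track of the graded pieces. Throughout I use that $g$ is nonconstant with no multiple factors, so that $A'=k[[z,w]]/(g)$ is a reduced, connected, purely one-dimensional ELF scheme with exactly $\br_0(A')$ irreducible components, and that $A$ has an isolated singularity. For the leftmost isomorphism I would apply Proposition \ref{prop:filtration-Ksg1}(2) with $C=Z=\Spec(A')$; this gives $\Ksg_0(A')=\Gr^0\Ksg_0(A')\simeq\Z^{\br_0(A')-1}$ and, equally importantly, records that the topological filtration on $\Ksg_0(A')$ is concentrated in degree $0$.

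For the middle isomorphism I would invoke Kn\"orrer periodicity. Taking the regular scheme $\Spec k[[z,w]]$ together with the nonzero morphism $g\colon\Spec k[[z,w]]\to\A^1$, its zero fibre is $\Spec(A')$, while the zero fibre of $g+xy$ on $\Spec k[[z,w]]\times\A^2$ is $\Spec(A)$. Theorem \ref{Thm:Knorrer} then produces a triangle equivalence $\Dsg(A')\simeq\Dsg(A)$, and hence an isomorphism $\Ksg_0(A')\simeq\Ksg_0(A)$ on Grothendieck groups.

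The rightmost isomorphism $\Ksg_0(A)\simeq\Cl(A)$ is the substantial step, and the strategy is to show that the topological filtration on $\Ksg_0(A)$ is concentrated in degree $1$ and then apply Proposition \ref{prop:filtration-Ksg1}(3). Two vanishing statements are required. First, $\Gr^0\Ksg_0(A)=0$: since $A$ is a hypersurface with isolated singularity it is Cohen--Macaulay and regular in codimension one, hence normal by Serre's criterion and therefore a domain, so $\Spec(A)$ is integral and $\Gr^0\Gt_0(A)=\Z\cdot[\OO_{\Spec(A)}]$; as $\OO_{\Spec(A)}$ is perfect, this class dies in $\Ksg_0(A)$ by \eqref{eq:KGseq}, forcing $\Gr^0\Ksg_0(A)=0$. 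Second, $\Gr^{j}\Ksg_0(A)=0$ for $j\ge 2$: by Proposition \ref{prop:filtration-Ksg1}(4) the Kn\"orrer equivalence shifts the filtration up by one, so $\Gr^{j}\Ksg_0(A)\simeq\Gr^{j-1}\Ksg_0(A')$, which vanishes for $j\ge 2$ by the degree-$0$ concentration found above (the case $j=1$ simultaneously reproves $\Gr^1\Ksg_0(A)\simeq\Z^{\br_0(A')-1}$). Hence $\Ksg_0(A)=\Gr^1\Ksg_0(A)$. Since $A$ is normal and irreducible, Proposition \ref{prop:filtration-Ksg1}(3) gives $\Gr^1\Ksg_0(A)\simeq\Cl(A)/\Pic(A)$, and $\Pic(A)=0$ because finitely generated projectives over the local ring $A$ are free; thus $\Ksg_0(A)\simeq\Cl(A)$.

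The main obstacle is exactly the claim that $\Ksg_0(A)$ lives entirely in filtration degree $1$. Neither ingredient suffices alone: Proposition \ref{prop:filtration-Ksg1}(3) only identifies the single graded piece $\Gr^1$, while Kn\"orrer periodicity transports information from the curve but says nothing directly about $\Gr^0\Ksg_0(A)$. One must therefore combine the structure-sheaf argument (killing $\Gr^0$) with the Kn\"orrer filtration shift (killing $\Gr^{\ge 2}$) to collapse the filtration onto $\Gr^1$. The remaining inputs---reducedness of $A'$, regularity of $\Spec k[[z,w]]$, and normality of $A$---are routine.
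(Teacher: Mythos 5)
Your proof is correct and takes essentially the same route as the paper's: Proposition \ref{prop:filtration-Ksg1}(2) for the first isomorphism, Kn\"orrer periodicity (Theorem \ref{Thm:Knorrer}) with the filtration shift of Proposition \ref{prop:filtration-Ksg1}(4) for the second, and Proposition \ref{prop:filtration-Ksg1}(3), after collapsing the filtration onto $\Gr^1$ and using $\Pic(A)=0$, for the third. The only (harmless) difference is that you kill $\Gr^0\Ksg_0(A)$ by a separate normality/structure-sheaf argument, whereas the paper's citation of part (4) already yields this, since the $F^i$-version of the filtration shift gives $\Ksg_0(A')=F^0\Ksg_0(A')\simeq F^1\Ksg_0(A)$ inside the group isomorphism, forcing $F^1\Ksg_0(A)=\Ksg_0(A)$.
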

\begin{proof}
We apply Proposition $\ref{prop:filtration-Ksg1}$ 
to the ELF schemes $\Spec(A')$ and $\Spec(A)$.
The first isomorphism is 2) of the Proposition,
the second one is 4) (or Theorem \ref{Thm:Knorrer} directly)
and the last isomorphism is 3).
\end{proof}

We call $\br_{0}(A):=\br_{0}(A')$, that is the number of irreducible components of $A'$, also \emph{branch number}
of $A$ (resp. $A'$).
More generally, if $X$ is a 
normal threefold with isolated $cA_n$ singularities, we denote by $\br_p(X)$ the branch number of $\wh{\OO}_{X,p}$ and we call it the \emph{branch number of $X$ at $p$}. The (total) \emph{branch number of $X$}, denoted $\br(X)$, is the sum of the $\br_p(X)$ running over $p\in\Sing(X)$.

It is well-known and easy to see that isolated $cA_1$ singularities are precisely $A_n$ threefold singularities (Morse Lemma \cite[Section 11.1]{morse}). 
More generally, the following table lists the local class groups of 
the ADE threefold singularities.

\begin{equation}\label{tab:ADE}
\begin{tabular}{| c | c | c | c |}
\hline
 Type & Equation & $\Cl(A)$ & $\br_0(A)$ \\ 
\hline
 $A_{2k} \; (k \ge 1)$ & $x^2 + y^2 + z^2+w^{2k+1}$ & $0$ & $1$ \\ 
\hline
 $A_{2k-1} \; (k \ge 1)$ & $x^2 + y^2 + z^2+w^{2k}$ & $\Z$ & $2$ \\ 
\hline
 $D_{2k} \; (k \ge 2)$ & $x^2 + y^2 + z^2w+w^{2k-1}$ & $\Z^2$ & $3$\\  
\hline
 $D_{2k-1} \; (k \ge 3)$ & $x^2 + y^2 + z^2w+w^{2k-2}$ & $\Z$  & $2$\\  
\hline
 $E_6$ & $x^2 + y^2 + z^3+w^4$ & $0$ & $1$ \\
\hline
 $E_7$ & $x^2 + y^2 + z^3+zw^3$ & $\Z$ & $2$\\
\hline
 $E_8$ & $x^2 + y^2 + z^3+w^5$ & $0$ & $1$\\
\hline
\end{tabular}
\end{equation}

\medskip

So far, we studied the complete local
geometry of $cA_n$ singularities;
global geometry of projective threefolds
with $cA_n$ singularities
in relation to their class groups, the so-called
defect $\delta$ and $\Kt_{-1}$,
is considered at the end of the next section.

\section{Class groups and $\Kt_{-1}$}
\label{sec:K-theory}

We recall that our schemes satisfy Orlov's ELF condition.
Furthermore, the words curve, surface, threefold are reserved for reduced
quasi-projective
schemes of pure dimensions one, two and three respectively.
Our goal in this section is to study $\Kt_{-1}$ for curves, surfaces
and threefolds. The results for threefolds with $cA_n$ singularities
are new, whereas results for curves and surfaces mostly go back to Weibel
\cite{weibel-surfaces}.

For a 
curve $C$ we denote by $\br_p(C)$ the branch number of $\wh{\OO}_{C,p}$ and call it \emph{branch number of $C$ at $p$} and by $\br(C)=\sum\br_p(C)$ the (total) \emph{branch number of $C$}. 
For example, the branch number of a node is two
and branch number of a cusp is one.
Let us now consider $\Kt_{-1}$ of a curve.

\begin{proposition}\label{prop:K_-1-curve}
Let $C$ be a connected 
curve. Then $\Kt_{-1}(C)$ is a free abelian group of rank 
\begin{equation}\label{eqn:K_-1-rank-curve}
 \br(C) -|\Sing(C)| -N +1,   
\end{equation}
where $N$ is the number of irreducible components of $C$. In particular, if $C$ has at most nodal singularities, then $\Kt_{-1}(C)$ is free abelian of rank $|\Sing(C)|-N+1$.
\end{proposition}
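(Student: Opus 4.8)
The plan is to realise $\Kt_{-1}(C)$ as the cokernel of the natural completion map between the two singularity Grothendieck groups, and then to identify this cokernel with the first cohomology of a finite incidence graph; both freeness and the rank formula will fall out of this identification.

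Taking $X = Z = C$ in Lemma \ref{lem:SES_K_{-1}}, the sequence \eqref{SES} gives
\[
\Kt_{-1}(C) \simeq \coker\!\big(\Ksg_0(C) \to \KKsg_0(C)\big),
\]
and in particular the left-hand map is injective. I would evaluate both groups with Proposition \ref{prop:filtration-Ksg1}. Part 2) with $Z=C$ gives $\Ksg_0(C) \simeq \Z^{N-1}$, generated by the classes $[\OO_{C_1}],\dots,[\OO_{C_N}]$ of the irreducible components modulo the single relation $\sum_i [\OO_{C_i}]=0$. Part 1) identifies $\KKsg_0(C) \simeq \bigoplus_{p\in\Sing(C)} \Ksg_0(\wh{\OO}_{C,p})$; for each $p$ the scheme $\Spec(\wh{\OO}_{C,p})$ is connected, reduced (as $C$ is excellent in characteristic zero), purely one-dimensional, and its irreducible components are exactly the $\br_p(C)$ branches, so a second application of Part 2) yields $\Ksg_0(\wh{\OO}_{C,p}) \simeq \Z^{\br_p(C)-1}$. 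Hence $\KKsg_0(C)$ is free of rank $\br(C)-|\Sing(C)|$, and injectivity already forces the rank of $\Kt_{-1}(C)$ to equal $(\br(C)-|\Sing(C)|)-(N-1)$.

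The essential remaining point is torsion-freeness. Here I would make the completion map explicit: by functoriality of the topological filtration under flat pullback (as used in the proof of Proposition \ref{prop:filtration-Ksg1}), it sends $[\OO_{C_i}]$ to the sum of the classes of the branches lying on $C_i$, taken at each singular point. I then introduce the bipartite incidence graph $\Gamma$ with a vertex $v_i$ for each component $C_i$, a vertex $w_p$ for each $p\in\Sing(C)$, and one edge for each branch, joining $w_p$ to $v_i$ when that branch of $C_i$ passes through $p$. Comparing presentations, the cokernel above is exactly $\coker(\delta)=H^1(\Gamma;\Z)$, where $\delta\colon C_0(\Gamma)\to C_1(\Gamma)$ is the transpose of the cellular boundary: the relations coming from the generators $[\OO_{C_i}]$ are the coboundaries $\delta(v_i)$, while the relations $\sum_{b}[\OO_b]=0$ built into each local factor $\Ksg_0(\wh{\OO}_{C,p})$ are the coboundaries $\delta(w_p)$. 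Since the cohomology of a finite graph is free abelian, $\Kt_{-1}(C)$ is free. Its rank is the first Betti number $b_1(\Gamma)$; as $\Gamma$ has $\br(C)$ edges, $N+|\Sing(C)|$ vertices and (being connected, since $C$ is) a single connected component,
\[
b_1(\Gamma)=\br(C)-(N+|\Sing(C)|)+1=\br(C)-|\Sing(C)|-N+1 .
\]
The nodal case is then immediate: each node contributes $\br_p(C)=2$, so $\br(C)=2|\Sing(C)|$ and the rank becomes $|\Sing(C)|-N+1$.

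I expect the torsion-freeness/graph step to be the main obstacle. The rank is automatic from the exact sequence once the two end groups are computed, but ruling out torsion in the cokernel requires pinning down the completion map combinatorially and matching it with the cochain complex of $\Gamma$. A secondary point needing care is the verification that each $\Spec(\wh{\OO}_{C,p})$ meets the ELF, reduced, connected, purely one-dimensional hypotheses under which Proposition \ref{prop:filtration-Ksg1} 2) may be applied.
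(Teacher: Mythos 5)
Your proof is correct, and its decisive step takes a genuinely different route from the paper's. Both arguments start identically: by Lemma \ref{lem:SES_K_{-1}}, $\Kt_{-1}(C)$ is the cokernel of the injection $\Ksg_0(C)\to\KKsg_0(C)$, and Proposition \ref{prop:filtration-Ksg1} 1), 2) computes these groups as $\Z^{N-1}$ and $\Z^{\br(C)-|\Sing(C)|}$, which settles the rank. For torsion-freeness, however, the paper never makes the completion map explicit: it inducts on the number of irreducible components, choosing a component $C_0$ so that $C-C_0$ stays connected, assembles the three sequences \eqref{SES} for $\Ksg_0(C\text{ on }C_0)$, for $C$, and for $C-C_0$ into a $3\times 3$ diagram, verifies that the left and middle columns are split exact, and extracts exactness of the column of $\Kt_{-1}$'s from the Snake Lemma; torsion-freeness follows since an extension of torsion-free groups by a torsion-free group is torsion-free. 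You instead compute the map itself --- $[\OO_{C_i}]$ maps to the sum of the branch classes of $C_i$, which is right because the functor is completion (hence flat pullback) and the point-supported correction terms die as $F^1\Ksg_0=0$ in dimension one --- and identify the cokernel with $H^1(\Gamma;\Z)$ of the bipartite incidence graph, which is free by universal coefficients. This is essentially a singularity-category rendering of Weibel's original graph-theoretic computation, which the paper cites but deliberately bypasses. Your route buys a cleaner output: a canonical isomorphism $\Kt_{-1}(C)\simeq H^1(\Gamma;\Z)$, from which Corollary \ref{cor:nodal} is immediate since for nodal curves the incidence graph is the barycentric subdivision of the dual graph. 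The paper's route buys formality: it uses only exactness and splitness of diagrams of groups with supports, never a presentation of the local factors. The one point you should spell out is that the kernel of the surjection from the free group on the branches at $p$ onto $\Ksg_0(\wh{\OO}_{C,p})$ is generated exactly by the sum vector: this holds because that vector is primitive, lies in the kernel (the sum of branch classes equals $[\wh{\OO}_{C,p}]=0$ modulo $F^1=0$), and the quotient is free of rank $\br_p(C)-1$, forcing the kernel to be a saturated rank-one subgroup.
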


\begin{example}
Let $C \subset \P^2$ be a union of $N$ projective
lines intersecting in one point.
Then $\br(C) = N$, $|\Sing(C)| = 1$,
hence $\Kt_{-1}(C) = 0$.
\end{example}

\begin{proof} 
The statement will follow from a result of Weibel \cite[Lemma 2.3 (2)]{weibel-surfaces} by comparing the number of loops of the graph constructed in \cite{weibel-surfaces} with the number given in the statement. We will, however, give a different proof here using Grothendieck groups of the singularity category.

By Proposition \ref{prop:filtration-Ksg1} 1) and 2) we see that $\KKsg_0(C)\simeq
\bigoplus_{p} \Ksg_0(\wh{\OO}_{C, p})$ with each component being a free abelian group of rank $\br_p(C)-1$ and that $\Ksg_0(C)$ is a free abelian group of rank $N-1$. 
Using the short exact sequence $( \ref{SES})$ it is clear that the rank of $\Kt_{-1}(C)$ is just $\sum (\br_p(C)-1) -N-1$, which is equal to $(\ref{eqn:K_-1-rank-curve})$.
Furthermore, if $C$ is irreducible, the above argument also shows that
$\Kt_{-1}(C)$ is torsion-free.

To show that $\Kt_{-1}(C)$ is torsion-free in general, we proceed by induction on the number of irreducible components $N$ of $C$. 
Assume that $N\geq 2$ and let $C_0\subset C$ be an irreducible component of $C$. 
We may choose $C_0$ in such a way that $C - C_0$
is still connected (for that we can take $C_0$
to be a component with minimal number of intersections
with other components).
Consider the commutative diagram
\begin{equation}\label{diagram}
\xymatrix{
& 0 \ar[d] & 0 \ar[d] & 0 \ar[d] & \\
0\ar@{->}[r] & \Ksg_0(C\text{ on } C_0)\ar@{->}[r] \ar[d] & \KKsg_0(C\text{ on }C_0) \ar@{->}[r] \ar[d] & \Kt_{-1}(C\text{ on }C_0) \ar@{->}[r] \ar@{->}[d]  & 0 \\
0\ar@{->}[r] & \Ksg_0(C)\ar@{->}[r] \ar[d] & \KKsg_0(C) \ar@{->}[r] \ar[d] & \Kt_{-1}(C) \ar@{->}[r] \ar@{->}[d]  & 0 \\
0\ar@{->}[r] & \Ksg_0( C - C_0 ) \ar@{->}[r] \ar[d] & \KKsg_0(C - C_0) \ar@{->}[r] \ar[d] &  \Kt_{-1}(C - C_0)\ar@{->}[r] \ar[d] & 0 .\\
& 0  & 0 & 0  & \\
}
\end{equation}
The rows are exact by $\eqref{SES}$.
Let us now consider exactness of the columns.
By Proposition \ref{prop:filtration-Ksg1} 2) $\Ksg_0(C)\simeq \Z^{N-1}$, $\Ksg_0(C-C_0)\simeq\Z^{N-2}$ and $\Ksg_0(C\text{ on }C_0)\simeq\Z$ generated by the structure sheaves of the components of $C$, $C-C_0$ and $C_0$ respectively,
and the maps between the groups are the obvious ones,
so that the left column is split exact.
By Proposition $\ref{prop:filtration-Ksg1}$ 1) the middle column
is also split exact.

Applying the Snake Lemma to the first two columns
we get exactness of the right column.
Finally, by the induction hypothesis $\Kt_{-1}(C - C_0)$ is torsion-free and since $\Kt_{-1}(C\text{ on }C_0)$ is also torsion-free
because the top row is split exact, 
we obtain that $\Kt_{-1}(C)$ is torsion-free.

The statement about nodal curves
follows since the branch number of every node
is two, so that $\br(C) = 2|\Sing(C)|$.
\end{proof}

Recall that the \emph{dual graph} $\Gamma$
of a nodal curve $C$ is defined to be
the following (undirected) graph.
Vertices of $\Gamma$ correspond to
irreducible components of $C$.
Edges between distinct vertices
correspond to intersections of components. Finally, for every self-intersection point on a component
the corresponding vertex has a loop.
Usually $\Gamma$ is decorated
by indicating the genus of each
component, but we do not need this
for our purposes.

The following corollary implies
for example that $\Kt_{-1}$ of
a nodal cubic ($\Gamma$ has one
vertex with a loop),
or any cycle of smooth curves
($\Gamma$ is a cycle) is $\Z$
while $\Kt_{-1}$ of any tree
(that is $\Gamma$ is a tree)
of smooth curves is zero.

\begin{corollary}\label{cor:nodal}
Let $C$ be a curve with at most nodal singularities and let $\Gamma=\Gamma(C)$ be the dual graph of $C$, then
\[
\Kt_{-1}(C)\simeq\Z^{\lambda},
\]
where $\lambda =\lambda(\Gamma )$ is the 
first Betti number
of $\Gamma$.
\end{corollary}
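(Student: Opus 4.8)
The plan is to deduce this directly from Proposition \ref{prop:K_-1-curve} together with the elementary formula for the first Betti number of a graph. The only new content is a reduction to the connected case and a combinatorial bookkeeping step identifying the rank appearing in Proposition \ref{prop:K_-1-curve} with $\lambda(\Gamma)$.

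First I would reduce to the connected case, since Proposition \ref{prop:K_-1-curve} is stated only for connected curves. If $C=\bigsqcup_j C_j$ is the decomposition of $C$ into its connected components, then $\Db(C)$, $\Dperf(C)$ and $\Dsg(C)$ each split as products indexed by $j$; hence all three Grothendieck groups occurring in the short exact sequence \eqref{SES} split as direct sums over $j$, and the sequence itself splits as a direct sum. This gives $\Kt_{-1}(C)\simeq\bigoplus_j \Kt_{-1}(C_j)$. On the combinatorial side, $\Gamma(C)$ is the disjoint union of the graphs $\Gamma(C_j)$, and the first Betti number is additive over connected components, so $\lambda(\Gamma(C))=\sum_j \lambda(\Gamma(C_j))$. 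Therefore it suffices to prove the statement when $C$ is connected.

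For connected nodal $C$ with $N$ irreducible components, Proposition \ref{prop:K_-1-curve} gives that $\Kt_{-1}(C)$ is free abelian of rank $|\Sing(C)|-N+1$, so it remains only to identify this integer with $\lambda(\Gamma)$. This I would do by counting cells of $\Gamma$: the number of vertices equals the number $N$ of irreducible components, and each node of $C$ contributes exactly one edge, namely an ordinary edge joining two distinct vertices when the two branches lie on distinct components, and a loop at a single vertex when the node is a self-intersection. In either case it is a single edge, so the number of edges of $\Gamma$ equals $|\Sing(C)|$. Since $C$ is connected, $\Gamma$ is connected, and the standard formula $\lambda(\Gamma)=(\text{edges})-(\text{vertices})+1$ for the first Betti number of a connected graph yields $\lambda(\Gamma)=|\Sing(C)|-N+1$, matching the rank above.

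I do not expect any serious obstacle here, as all the weight of the argument sits in Proposition \ref{prop:K_-1-curve}. The two points that merely require care are the additivity of $\Kt_{-1}$ over connected components, which is needed precisely because that Proposition assumes connectedness, and the observation that self-intersection nodes correspond to loops of $\Gamma$ yet are still counted as single edges, so that the edge count is exactly $|\Sing(C)|$ rather than an overcount.
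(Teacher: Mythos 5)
Your proof is correct and follows essentially the same route as the paper: reduce to the connected case by additivity of both sides over connected components, apply Proposition \ref{prop:K_-1-curve} to get the rank $|\Sing(C)|-N+1$, and identify this with $\lambda(\Gamma)$ by counting vertices as components and edges as nodes (with self-intersections giving loops). You merely spell out in more detail the additivity of $\Kt_{-1}$ and the edge count, which the paper asserts in one line each.
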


\begin{proof}
Both sides are additive for finite
disjoint unions so we may assume that
$C$ is connected.

By definition of $\Gamma$, $N$ is its number of vertices and $|\Sing(C)|=\sum (\br_p(C)-1)$ is its number of edges. The result follows by Proposition $\ref{prop:K_-1-curve}$
since
\[
1 - \lambda = N - |\Sing(C)|
\]
so that $\lambda = |\Sing(C)| - N+ 1$.
\end{proof}

Let us consider the higher dimensional case.

\begin{definition}\label{def:mnf}
Let $X$ be 
normal irreducible
with at most isolated singularities. We say that $X$ 
has enough Weil divisors
if the natural map $\Cl(X)\to\bigoplus_p\Cl(\wh{\OO}_{X,p})$ is surjective, where the direct sum runs over all $p\in\Sing(X)$.

If $X$ is a threefold with isolated nodal singularities,
and $X$ has enough Weil divisors, we say that $X$
\emph{is maximally nonfactorial}.
\end{definition}

\begin{proposition}\label{prop:max-nonfact}
Let $X$ be normal 
irreducible with at most isolated singularities. Assume
that $\Kt_{-1}(X) = 0$. Then $X$ 
has enough Weil divisors.
\end{proposition}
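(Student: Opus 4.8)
The plan is to deduce the surjectivity of $\Cl(X)\to\bigoplus_{p}\Cl(\wh{\OO}_{X,p})$ from an isomorphism of singularity Grothendieck groups, by passing to the first graded piece of the topological filtration. First I would feed $Z=X$ into the short exact sequence \eqref{SES} of Lemma \ref{lem:SES_K_{-1}}. The hypothesis $\Kt_{-1}(X)=0$ then turns the inclusion into an isomorphism
\[
\Ksg_0(X)\xrightarrow{\ \sim\ }\KKsg_0(X),
\]
and Proposition \ref{prop:filtration-Ksg1} 1) identifies the target with $\bigoplus_{p\in\Sing(X)}\Ksg_0(\wh{\OO}_{X,p})$, the isomorphism being induced by restriction (flat pullback) to the formal neighbourhoods of the singular points. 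By \eqref{eq:Fi-relative-completion} this isomorphism is compatible with the topological filtrations, in the sense that it carries $F^i\Ksg_0(X)$ into $\bigoplus_p F^i\Ksg_0(\wh{\OO}_{X,p})$.

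Next I would record that on both sides the degree-zero graded piece vanishes. Since $X$ is irreducible, $\Gr^0\Gt_0(X)=\Z\cdot[\OO_X]$, and $[\OO_X]$ is perfect, so by \eqref{eq:KGseq} its class dies in $\Ksg_0(X)$; hence $\Gr^0\Ksg_0(X)=0$, i.e.\ $F^1\Ksg_0(X)=\Ksg_0(X)$. The same argument applied over the local domains $\wh{\OO}_{X,p}$ (where every finitely generated projective module is free, so $\Kt_0(\wh{\OO}_{X,p})=\Z$) gives $F^1\Ksg_0(\wh{\OO}_{X,p})=\Ksg_0(\wh{\OO}_{X,p})$. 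Consequently the natural surjections $\Ksg_0\twoheadrightarrow\Gr^1\Ksg_0$ are defined on the whole groups, and the filtered isomorphism above fits into a commutative square whose top row is that isomorphism and whose vertical maps are these surjections. A short diagram chase — the composite of the top isomorphism with the right-hand surjection is onto, and it factors through the induced map on $\Gr^1$ — shows that
\[
\Gr^1\Ksg_0(X)\longrightarrow\bigoplus_{p\in\Sing(X)}\Gr^1\Ksg_0(\wh{\OO}_{X,p})
\]
is surjective.

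Finally I would translate this into divisor class groups using Proposition \ref{prop:filtration-Ksg1} 3), which gives $\Gr^1\Ksg_0(X)\simeq\Cl(X)/\Pic(X)$ functorially for flat pullbacks. Applying the same statement to each $\Spec\wh{\OO}_{X,p}$ — which is again normal and irreducible, the completion of an excellent normal local ring being a normal local ring, hence a domain — and using that $\Pic$ of a local ring is trivial, yields $\Gr^1\Ksg_0(\wh{\OO}_{X,p})\simeq\Cl(\wh{\OO}_{X,p})$. Under these identifications the surjection just produced becomes the restriction map $\Cl(X)/\Pic(X)\twoheadrightarrow\bigoplus_p\Cl(\wh{\OO}_{X,p})$; precomposing with the quotient $\Cl(X)\twoheadrightarrow\Cl(X)/\Pic(X)$ gives surjectivity of the natural map in Definition \ref{def:mnf}, so $X$ has enough Weil divisors.

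I expect the main obstacle to be the bookkeeping of compatibilities rather than any single hard estimate: one must check that the isomorphism coming from \eqref{SES}, the local decomposition of $\KKsg_0(X)$ in Proposition \ref{prop:filtration-Ksg1} 1), and the class-group identifications of Proposition \ref{prop:filtration-Ksg1} 3) are all induced by the \emph{same} flat pullback to the completions, so that the squares in the diagram chase genuinely commute and the resulting restriction on class groups is the ``natural'' one. The secondary point to verify carefully is that each $\wh{\OO}_{X,p}$ is a normal domain, which is exactly what makes Proposition \ref{prop:filtration-Ksg1} 3) applicable locally.
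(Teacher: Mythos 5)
Your proposal is correct and follows essentially the same route as the paper's own proof: using $\Kt_{-1}(X)=0$ in \eqref{SES} to identify $\Ksg_0(X)$ with $\KKsg_0(X)\simeq\bigoplus_p\Ksg_0(\wh{\OO}_{X,p})$, noting that irreducibility kills $\Gr^0$ on both sides, passing to $\Gr^1$ via the filtration compatibility \eqref{eq:Fi-relative-completion}, and converting to class groups by Proposition \ref{prop:filtration-Ksg1} 3). The paper packages the local completions into a single scheme $X'=\Spec\bigl(\prod_{p}\wh{\OO}_{X,p}\bigr)$, but this is only a cosmetic difference; your extra care about normality of the completions and the commutativity of the comparison squares is exactly the bookkeeping the paper leaves implicit.
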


\begin{proof}
Let $X' = \Spec\left(\prod_{p\in\Sing(X)} \wh{\OO}_{X,p}\right)$.
By Proposition \ref{prop:filtration-Ksg1} 1), $\KKsg_0(X) \simeq \Ksg_0(X')$.

Since $X$ is irreducible, we have $\Gr^0 \Ksg_0(X) = 0$, so that
$\Ksg_0(X) = F^1 \Ksg_0(X)$.
Similarly, since $X'$ is a disjoint union of irreducible components,
$\Gr^0 \Ksg_0(X') = 0$
and $\Ksg_0(X) = F^1 \Ksg_0(X')$.
In particular as $\Ksg_0(X) \to \KKsg_0(X)$ is surjective
by \eqref{SES}, we also get that $\Gr^1 \Ksg_0(X) \to \Gr^1 \Ksg_0(X')$
is surjective.

By Proposition \ref{prop:filtration-Ksg1} 3) we have a commutative diagram
\begin{equation}\label{eq:Ksg0-diagram}
\xymatrix{
\Gr^1 \Ksg_0(X) \ar[r] \ar[d]^\simeq & \Gr^1 \Ksg_0(X') \ar[d]^\simeq \\
\Cl(X) / \Pic(X) \ar[r] & \Cl(X') \\
}
\end{equation}
where we used that $\Pic(X') = 0$.
Since we know that the top horizontal arrow is surjective,
the bottom horizontal arrow is surjective as well,
which 
means that $X$ has enough Weil divisors.
\end{proof}

If the local singularity Grothendieck groups are generated by codimension
one cycles, then $\Kt_{-1}(X)$ is controlled by
codimension one cycles as well:

\begin{proposition}\label{prop:Class-groups-singularity}
Let $X$ be 
normal irreducible with at most isolated singularities and such that $F^2\Ksg_0(\wh{\OO}_{X,p})=0$ for all $p$ in $\Sing(X)$. Then there is an isomorphism $\Ksg_0(\wh{\OO}_{X,p})\simeq\Cl(\wh{\OO}_{X,p})$ for all $p$ in $\Sing(X)$ and  an exact sequence
\begin{equation}\label{eqn:main-exact-sequence}
  0 \to \Pic(X) \to \Cl(X) \to \bigoplus_{p\in\Sing(X)}\Cl(\wh{\OO}_{X,p}) \to \Kt_{-1}(X) \to 0.  
\end{equation}
In particular, 
$\Kt_{-1}(X)=0$
if and only if $X$ has enough Weil divisors.
\end{proposition}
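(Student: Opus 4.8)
The plan is to establish the local isomorphism first and then bootstrap it to the global statement by splicing the defining sequence \eqref{SES} with the class-group step of the topological filtration. Throughout, the hypothesis $F^2\Ksg_0(\wh{\OO}_{X,p})=0$ is used to force the filtration on $\Ksg_0$ to collapse to a single nontrivial step sitting in codimension one, where Proposition \ref{prop:filtration-Ksg1} 3) reads off the class group.

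For the local isomorphism, I would fix $p\in\Sing(X)$ and apply Proposition \ref{prop:filtration-Ksg1} to $\Spec(\wh{\OO}_{X,p})$. As this is a normal local domain, every finitely generated projective module is free, so $[\OO]=0$ in $\Ksg_0$ and hence $\Gr^0\Ksg_0(\wh{\OO}_{X,p})=0$; by hypothesis $F^2\Ksg_0(\wh{\OO}_{X,p})=0$. Thus $\Ksg_0(\wh{\OO}_{X,p})=F^1=\Gr^1\Ksg_0(\wh{\OO}_{X,p})$, and part 3) of the Proposition (with $\Pic$ of a local ring trivial) identifies this with $\Cl(\wh{\OO}_{X,p})$. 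I would then run exactly the same argument globally, the point being that the local vanishing propagates: by the containment \eqref{eq:Fi-relative-completion} one has $F^2\Ksg_0(X)\subset\bigoplus_p F^2\Ksg_0(\wh{\OO}_{X,p})=0$, while $\Gr^0\Ksg_0(X)=0$ since $X$ is irreducible. Hence $\Ksg_0(X)=\Gr^1\Ksg_0(X)\simeq\Cl(X)/\Pic(X)$ by part 3).

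Finally I would take \eqref{SES} for $Z=X$ and substitute the three identifications just obtained, namely $\KKsg_0(X)\simeq\bigoplus_p\Ksg_0(\wh{\OO}_{X,p})\simeq\bigoplus_p\Cl(\wh{\OO}_{X,p})$ from Proposition \ref{prop:filtration-Ksg1} 1) together with the local isomorphism, and $\Ksg_0(X)\simeq\Cl(X)/\Pic(X)$. This gives
\[
0\to\Cl(X)/\Pic(X)\to\bigoplus_{p\in\Sing(X)}\Cl(\wh{\OO}_{X,p})\to\Kt_{-1}(X)\to 0,
\]
and prepending the tautological sequence $0\to\Pic(X)\to\Cl(X)\to\Cl(X)/\Pic(X)\to 0$ yields \eqref{eqn:main-exact-sequence}. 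The final claim is then immediate: the cokernel $\Kt_{-1}(X)$ vanishes precisely when $\Cl(X)\to\bigoplus_p\Cl(\wh{\OO}_{X,p})$ is surjective, which is the definition of having enough Weil divisors (Definition \ref{def:mnf}).

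The main obstacle is the bookkeeping of maps: I must verify that the injection $\Ksg_0(X)\hookrightarrow\KKsg_0(X)$ in \eqref{SES}, under these identifications, really is the natural restriction-to-completions map $\Cl(X)/\Pic(X)\to\bigoplus_p\Cl(\wh{\OO}_{X,p})$. This follows because the completion morphisms $\Spec(\wh{\OO}_{X,p})\to X$ are flat, the topological filtration is preserved under flat pullback, and part 3) of Proposition \ref{prop:filtration-Ksg1} is functorial for flat pullbacks; the remaining point to confirm is that the comparison $\KKsg_0(X)\simeq\bigoplus_p\Ksg_0(\wh{\OO}_{X,p})$ of part 1) is itself induced by these same completion pullbacks, so that the map on $\Gr^1$ is exactly the class-group restriction rather than merely an abstract isomorphism.
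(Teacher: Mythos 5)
Your proof is correct and is essentially the paper's own argument: the paper likewise collapses the topological filtration using the $F^2$ hypothesis together with the injectivity in \eqref{eq:Fi-relative-completion}, identifies $\Ksg_0(X)\simeq\Cl(X)/\Pic(X)$ and $\KKsg_0(X)\simeq\bigoplus_p\Cl(\wh{\OO}_{X,p})$ via Proposition \ref{prop:filtration-Ksg1} 1) and 3), and splices these into \eqref{SES}. The compatibility you flag as ``remaining to confirm'' is precisely what the paper's commutative diagram \eqref{eq:Ksg0-diagram} (from the proof of Proposition \ref{prop:max-nonfact}) encodes: the equivalence in part 1) is induced by the flat completion morphisms, so the functoriality clause of part 3) applies and the injection in \eqref{SES} is indeed the natural restriction map.
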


\begin{proof}
We keep the notation of the previous proof.
By \eqref{eq:Fi-relative-completion} we have injective maps
\[
F^i\Ksg_0(X) \to F^i\Ksg_0(X').
\]
Thus using $F^2\Ksg_0(\wh{\OO}_{X,p})=0$ for all $p$ in $\Sing(X)$, so that $F^2\Ksg_0(X') = 0$ we see that $F^2\Ksg_0(X)=0$.
Therefore diagram \eqref{eq:Ksg0-diagram}
is isomorphic to
\begin{equation}\label{eq:Ksg0-diagram2}
\xymatrix{
\Ksg_0(X) \ar[r] \ar[d]^\simeq & \Ksg_0(X') \ar[d]^\simeq \\
\Cl(X) / \Pic(X) \ar[r] & \Cl(X') \\
}
\end{equation}

The upper horizontal map is injective with cokernel $\Kt_{-1}(X)$ by \eqref{SES},
hence we get a short exact sequence
\[
0 \to \Cl(X)/\Pic(X) \to \Cl(X') \to \Kt_{-1}(X) \to 0,
\]
which implies \eqref{eqn:main-exact-sequence}.
\end{proof}

If $S$ is a normal surface, then the exact sequence $(\ref{eqn:main-exact-sequence})$ applies to $S$ by Proposition $\ref{prop:filtration-Ksg1}$ 1), and recovers a result of Weibel \cite[Corollary 5.4]{weibel-surfaces}.
Furthermore we have the following result.

\begin{proposition}\label{prop:surface-Br}
If $X$ is normal rational projective surface with rational singularities,
then we have an isomorphism $\Kt_{-1}(X) \simeq \Br(X)$.
\end{proposition}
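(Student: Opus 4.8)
The plan is to show that $\Kt_{-1}(X)$ and $\Br(X)$ are both canonically isomorphic to one and the same cokernel built from the intersection theory of a resolution, and then to compare the two descriptions. Fix the minimal resolution $\pi : \wt{X} \to X$ with reduced exceptional divisor $E = \bigcup_i E_i$; since $X$ has rational singularities, each $E_i \simeq \P^1$ and the components over a fixed point $p$ form a tree. First I would apply Proposition \ref{prop:Class-groups-singularity}: for a surface $F^2 \Ksg_0(\wh{\OO}_{X,p}) = 0$ by Proposition \ref{prop:filtration-Ksg1} 1) (here $n = 2$), so \eqref{eqn:main-exact-sequence} gives
\[
\Kt_{-1}(X) \simeq \coker\Big( \Cl(X) \to \bigoplus_{p \in \Sing(X)} \Cl(\wh{\OO}_{X,p}) \Big).
\]
Then I would transport everything to $\wt{X}$. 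Writing $L = \bigoplus_i \Z E_i$, the cycle class map $c : L \to \Pic(\wt{X})$ and the intersection map $r : \Pic(\wt{X}) \to L$, $\alpha \mapsto (\alpha \cdot E_i)_i$, satisfy $r \circ c = M$, the block-diagonal intersection matrix of $E$. Excision for the codimension-two set $\Sing(X)$ together with $\Cl(\wt{X}) = \Pic(\wt{X})$ gives $\Cl(X) \simeq \coker(c)$, while for rational singularities $\Cl(\wh{\OO}_{X,p})$ is the discriminant group $\coker(M_p)$ of the intersection form, so $\bigoplus_p \Cl(\wh{\OO}_{X,p}) \simeq \coker(M)$ and the restriction map above is induced by $r$. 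A short diagram chase, using $rc(L) \subseteq r(\Pic(\wt{X}))$, then identifies $\coker\big(\coker(c) \to \coker(M)\big)$ with $L/r(\Pic(\wt{X})) = \coker(r)$; hence $\Kt_{-1}(X) \simeq \coker(r : \Pic(\wt{X}) \to L)$.

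For the Brauer side I would pass to $k = \C$ by the Lefschetz principle and use the analytic exponential sequence $0 \to \Z \to \OO_X \to \OO_X^* \to 0$. Since $X$ is rational with rational singularities, $R^i \pi_* \OO_{\wt{X}} = 0$ for $i > 0$ yields $H^i(X, \OO_X) \simeq H^i(\wt{X}, \OO_{\wt{X}})$, which vanishes for $i = 1, 2$ because $\wt{X}$ is a smooth rational surface; and $H^3(X, \OO_X) = 0$ for dimension reasons. The long exact sequence then gives $\Br(X) \simeq H^2(X, \OO_X^*) \simeq H^3(X, \Z)$, where the comparison of analytic and étale cohomological Brauer groups on torsion is Artin's theorem. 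To compute $H^3(X, \Z)$ I would invoke the Mayer--Vietoris sequence of the abstract blow-up square with vertices $E, \wt{X}, \Sing(X), X$; as $\Sing(X)$ is a finite set of points, $H^3(\wt{X}, \Z) = 0$ and $H^3(E, \Z) = 0$ ($E$ a tree of curves), it degenerates to
\[
H^2(\wt{X}, \Z) \to H^2(E, \Z) \to H^3(X, \Z) \to 0 .
\]
Here $H^2(\wt{X}, \Z) = \Pic(\wt{X})$, $H^2(E, \Z) = \bigoplus_i \Z$ (tree of $\P^1$'s), and the restriction map is precisely $\alpha \mapsto (\alpha \cdot E_i)_i = r(\alpha)$. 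Thus $\Br(X) \simeq H^3(X, \Z) \simeq \coker(r)$, the same group as before, so $\Kt_{-1}(X) \simeq \Br(X)$.

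The routine parts are the two diagram chases and the rank bookkeeping; the main obstacle I expect is pinning down the two Brauer identifications. First one must verify $\Br(X) \simeq H^3(X, \Z)$ for the singular $X$, which relies on the coherent vanishings above and on the analytic-to-étale comparison. Second, and more delicately, one must check that the restriction map $H^2(\wt{X}, \Z) \to H^2(E, \Z)$ in the topological sequence coincides \emph{on the nose} with the intersection map $r$ used on the class-group side, so that the two computations produce the literally same map rather than merely abstractly isomorphic cokernels; this is what makes the resulting isomorphism canonical. As a consistency check, $\coker(r)$ is finite because $rc(L) = M(L)$ has finite index in $L$ (the intersection form on the exceptional curves is negative definite), matching the fact that $\Br(X)$ and $H^3(X, \Z)$ are torsion.
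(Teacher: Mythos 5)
Your proposal is correct, and it rests on the same pivot as the paper's proof --- identifying both groups with the cokernel of the intersection map $r\colon\Pic(\wt{X})\to\bigoplus_i\Z E_i$ --- but it reaches that pivot by genuinely different means. The paper's proof is essentially two citations plus bookkeeping: Weibel's exact sequence \eqref{eqn:secret-exact-sequence} computes $\Kt_{-1}(X)$ as $\coker\bigl(\Pic(\wt{X})\to\Pic(E)\bigr)$, and Bright's \cite[Proposition 1]{bright}, together with $\Br(\wt{X})=0$, computes $\Br(X)$ as the cokernel of the same map under the identification $\Pic(E)\simeq\mathbf{E}^{\star}$. You instead (i) derive the $\Kt$-theory side from the paper's own Proposition \ref{prop:Class-groups-singularity} (legitimately so: $F^2\Ksg_0(\wh{\OO}_{X,p})=0$ holds for surfaces by Proposition \ref{prop:filtration-Ksg1} 1), exactly as the paper remarks when it says \eqref{eqn:main-exact-sequence} recovers Weibel's result for normal surfaces), at the cost of importing the Mumford--Lipman theorem that for a \emph{rational} surface singularity $\Cl(\wh{\OO}_{X,p})\simeq\coker(M_p)$ with the restriction map computed by intersection with the exceptional curves --- an external input you should cite explicitly, and the place where rationality of the singularities enters on your side; your subsequent lattice chase identifying $\coker\bigl(\coker(c)\to\coker(M)\bigr)$ with $\coker(r)$ is correct since $rc=M$ and $M(L)\subseteq r(\Pic(\wt{X}))$. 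And (ii) on the Brauer side you in effect re-prove Bright's proposition from scratch (exponential sequence, Artin comparison on torsion, Mayer--Vietoris for the abstract blow-up square), which additionally requires the reduction to $k=\C$ and the invariance of $\Br$ under extension of algebraically closed fields --- standard but nontrivial points you only gesture at. What your route buys is a self-contained, geometrically transparent argument in which both invariants are visibly the same discriminant-type cokernel of the exceptional intersection pairing, with the compatibility of the two maps checked rather than inferred; what the paper's route buys is brevity and a chain of references that avoids transcendental input. Both proofs face the same final matching issue you flag (that the two computations produce cokernels of literally the same map, not merely abstractly isomorphic groups), so your attention to that point is well placed.
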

\begin{proof}
The proof is a combination
of a result of Weibel computing $\Kt_{-1}(X)$
with a result of Bright computing $\Br(X)$.

Let $\pi:\wt{X}\to X$ be a resolution of singularities of $X$, such that the exceptional divisor $E=\pi^{-1}(\Sing(X))$ is a normal crossing divisor.
By Artin \cite{artin}, $E$ is a tree of 
smooth rational curves. 
Let $N$ be the
number of irreducible components of $E$.

By
\cite[Example 2.13 and Proposition 5.1]{weibel-surfaces} there is an exact sequence
\begin{equation}\label{eqn:secret-exact-sequence}
0\to\Pic(X)\to\Pic(\wt{X})\to\Pic(E)\to\Kt_{-1}(X)\to 0 .
\end{equation}
It is well known that $\Pic(E)\simeq \Z^{N}$ spanned by the tautological bundles of the components of $E$. This group is also canonically isomorphic to the
free abelian
group $\mathbf{E}^{\star}$ 
generated by the components of the exceptional divisor
defined in \cite{bright}, and comparing 
\eqref{eqn:secret-exact-sequence} to 
\cite[Proposition 1]{bright} (the setup in \cite{bright}
includes minimality of the resolution, but it is not required in the proof), where
we use that $\Br(\wt{X}) = 0$
since $\wt{X}$ is a smooth projective
rational surface yields $\Kt_{-1}(X)\simeq\Br(X)$.
\end{proof}

The following result allows to compute $\Kt_{-1}$ of threefolds
with isolated compound $A_n$ singularities (in particular for nodal
threefolds), in terms
of their Picard group, Class group and the branch number defined 
in subsection \ref{subsec:cAn}.

\begin{corollary}\label{cor:K_{-1}}
Let $X$ be 
normal threefold with at most isolated $cA_n$ singularities. Then we have an exact sequence
\[
0 \to \Pic(X) \to \Cl(X) \to \Z^L \to \Kt_{-1}(X) \to 0,
\]
where $L=\br(X)-|\Sing(X)|$ is the difference between the branch number and the number of the singular points of $X$. In particular, if $X$ has at most nodal singularities, then $L= |\Sing(X)|$. 
\end{corollary}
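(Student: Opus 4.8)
The plan is to obtain the Corollary as a direct application of Proposition \ref{prop:Class-groups-singularity}, whose only nonformal hypothesis is the vanishing $F^2\Ksg_0(\wh{\OO}_{X,p}) = 0$ for every $p \in \Sing(X)$. Since $X$ is normal, its connected components are irreducible and all of $\Pic$, $\Cl$ and $\Kt_{-1}$ are additive over them, so I may assume $X$ is irreducible. Thus the argument splits into two tasks: first verify the $F^2$-vanishing for $cA_n$ singularities, and then identify the middle term $\bigoplus_p \Cl(\wh{\OO}_{X,p})$ with $\Z^L$.

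For the vanishing, I would write $A = \wh{\OO}_{X,p} \cong k[[x,y,z,w]]/(xy + g(z,w))$ and $A' = k[[z,w]]/(g)$ as in Subsection \ref{subsec:cAn}. Applying Knörrer periodicity (Theorem \ref{Thm:Knorrer}) to the regular scheme $\Spec k[[z,w]]$ with $f = g$ identifies $\Dsg(\Spec A')$ with $\Dsg(\Spec A)$, and by Proposition \ref{prop:filtration-Ksg1} 4) this equivalence shifts the topological filtration by one, so that
\[
F^{i+1}\Ksg_0(A) \simeq F^i\Ksg_0(A') \quad \text{for all } i \ge 0.
\]
Taking $i = 1$ and using that $A'$ is a one-dimensional isolated singularity, for which $F^1\Ksg_0(A') = 0$ by Proposition \ref{prop:filtration-Ksg1} 1) (or 2)), yields $F^2\Ksg_0(A) = 0$, exactly the hypothesis needed.

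Given this, Proposition \ref{prop:Class-groups-singularity} applies and produces the exact sequence
\[
0 \to \Pic(X) \to \Cl(X) \to \bigoplus_{p \in \Sing(X)} \Cl(\wh{\OO}_{X,p}) \to \Kt_{-1}(X) \to 0.
\]
It then remains to compute the middle term, and here I would invoke the chain of isomorphisms \eqref{Ksg-of-cAn} from Subsection \ref{subsec:cAn}, which gives $\Cl(\wh{\OO}_{X,p}) \simeq \Z^{\br_p(X) - 1}$, where $\br_p(X) = \br_0(A')$ is the branch number at $p$. Summing over the singular points,
\[
\bigoplus_{p \in \Sing(X)} \Cl(\wh{\OO}_{X,p}) \simeq \Z^{\sum_p (\br_p(X) - 1)} = \Z^{\br(X) - |\Sing(X)|} = \Z^L,
\]
which is the stated sequence. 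The final assertion follows because a node $xy + zw = 0$ corresponds to $g = zw$, so $A' = k[[z,w]]/(zw)$ has two branches and $\br_p(X) = 2$ at each node, whence $L = 2|\Sing(X)| - |\Sing(X)| = |\Sing(X)|$.

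Since every step is an invocation of an already-established result, there is no substantial obstacle; the one point requiring genuine (if brief) care is the $F^2$-vanishing, where one must correctly track the degree shift of the topological filtration under Knörrer periodicity and recall that the top graded piece of $\Ksg_0$ of a curve singularity vanishes. A secondary bookkeeping point is matching the branch-number normalizations ($\br_p(X) = \br_0(A')$) so that the exponent sum collapses cleanly to $\br(X) - |\Sing(X)|$.
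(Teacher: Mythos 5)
Your proof is correct and takes essentially the same route as the paper, whose proof likewise combines Proposition \ref{prop:Class-groups-singularity} with the chain of isomorphisms \eqref{Ksg-of-cAn} to identify $\bigoplus_{p}\Cl(\wh{\OO}_{X,p})$ with $\Z^{L}$. The only difference is expository: you spell out the verification of the hypothesis $F^2\Ksg_0(\wh{\OO}_{X,p})=0$ via the Kn\"orrer filtration shift and the vanishing of $F^1\Ksg_0$ of the curve singularity $A'$, whereas the paper leaves this implicit, it being contained in the proof of the lemma establishing \eqref{Ksg-of-cAn}.
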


\begin{proof}
Using $(\ref{Ksg-of-cAn})$ we obtain that $\Cl(\wh{\OO}_{X,p})\simeq\Z^{\br_p(X)-1}$. 
Since $L=\sum_{p \in \Sing(X)} (\br_p(X)-1)$, the result follows from Proposition $\ref{prop:Class-groups-singularity}$.
\end{proof}

\begin{definition}\label{def:defect}
Let $X$ be a 
normal threefold with at most isolated $cA_n$ singularities. We define the defect $\delta$ of $X$ by
\[
\delta:=\rk\,{\Cl(X)/ \Pic(X).}
\]
\end{definition}

\begin{remark}
Note that the defect is well-defined by Corollary $\ref{cor:K_{-1}}$. It was first defined by Clemens in \cite{clemens} for double solids, then by Werner \cite{werner} for nodal 3-dimensional hypersurfaces and later it was extended by Rams to 3-dimensional hypersurfaces with ADE singularities \cite{rams}. By \cite[Corollary 2.32]{clemens} and \cite[Theorem 4.1]{rams} one sees that the classical definition of the defect agrees with Definition $\ref{def:defect}$.
\end{remark}

\begin{remark}
Let $X$ be as in Definition $\ref{def:defect}$.
We can rewrite
 Corollary $\ref{cor:K_{-1}}$
 as a short exact sequence
\begin{equation}\label{eq:delta-L}
0 \to \Z^\delta \to \Z^L \to \Kt_{-1}(X) \to 0
\end{equation}
where $L=\br(X)-|\Sing(X)|$.
Explicitly, the first group $\Z^\delta \simeq \Cl(X) / \Pic(X)$
is generated by the classes of Weil divisors which are not Cartier,
the second group $\Z^L$ is the sum of local class groups of the
singular points, and the map between them is given by restricting 
Weil divisors to the local class groups.

By definition, $X$ is factorial if and only if $\delta =0$. On the other hand, if $X$ has enough Weil divisors, 
then $\delta=L$. 

It is worth noticing that if $L=0$, then $X$ is factorial and 
has enough Weil divisors
at the same time. Indeed, from \eqref{eq:delta-L} we see that $\delta =0$, as well as $\Kt_{-1}(X)=0$. 
For isolated $cA_n$ singularities
$L = 0$ if and only if all branch numbers of the singular
points are equal to one, that is singularities are
of type $xy+g(z,w)=0$, where $g(z,w)$ is irreducible.
For example, this is the case for
$A_{2k}$ singularities, see \eqref{tab:ADE}.
\end{remark}

We collect examples for known defect
of nodal threefolds.
Recall that we call
nodal threefolds with enough Weil
divisors, maximally nonfactorial.

\begin{example}
If $X$ is a nodal quadric threefold in $\P^4$,
then $\Pic(X) = \Z$ generated by the class
of the hyperplane section $H$; $\Cl(X) = \Z^2$, 
generated by the two planes $D_1$, $D_2$ passing
through the singular point, so that $H = D_1 + D_2$.
Therefore, $\delta = 1$, $L = 1$, the first
map in \eqref{eq:delta-L} is an isomorphism,
$\Kt_{-1}(X) = 0$ and $X$ is maximally nonfactorial.
\end{example}

\begin{example}\label{example:hypersurface}
Let $X$ be a nodal hypersurface in $\P^4$ or a nodal double cover of $\P^3$, which is not the nodal quadric hypersurface in $\P^4$. Let $r$ be the number of nodes of $X$. The defect $\delta$ in these cases has been studied in detail and it is known that $\delta < r$ (see \cite[Definition 1 and Theorem 9]{cynk} for the hypersurface case and \cite[Corollary 2.32]{clemens} for double solids). Thus by Corollary $\ref{cor:K_{-1}}$ we get that $\rk\, \Kt_{-1}(X)=r-\delta >0$, i.e. 3-dimensional nodal hypersurfaces and nodal double covers of $\P^3$ are never maximally nonfactorial, except for the 3 dimensional nodal quadric hypersurface.
\end{example}

\begin{lemma}\label{lem:rank-K_-1-blowup}
Let $\pi: \wt{X} \to X$ be a small resolution
of a nodal projective threefold with $r$ nodes.
Assume that $\wt{X}$ is obtained 
as a blow up of a smooth projective threefold
$Y$ in $\mu$
points.
Let $\rho_X$, $\rho_Y$ are Picard ranks of $X$ and $Y$
respectively.
Then 
we have
\[
\rk \, \Kt_{-1}(X) = r - \delta = r - \mu + \rho_X - \rho_Y ,
\]
where $\delta = \mu +\rho_Y - \rho_X$ is the defect of $X$.
\end{lemma}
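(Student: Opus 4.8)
The plan is to read off the first equality directly from the general structural result and then compute the defect $\delta$ purely as a difference of Picard and class group ranks, using the two geometric descriptions of $\wt{X}$.

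First I would record that in the nodal case every branch number equals two, so $L = \sum_{p}(\br_p(X)-1) = |\Sing(X)| = r$. Corollary \ref{cor:K_{-1}}, rewritten as the short exact sequence \eqref{eq:delta-L},
\[
0 \to \Z^\delta \to \Z^r \to \Kt_{-1}(X) \to 0,
\]
then gives $\rk\,\Kt_{-1}(X) = r - \delta$ by additivity of rank on short exact sequences of finitely generated abelian groups. This already yields the first claimed equality, and it also certifies that $\delta$ is well-defined. It remains to identify $\delta = \mu + \rho_Y - \rho_X$, equivalently to show $\rk\,\Cl(X) = \mu + \rho_Y$, since $\delta = \rk(\Cl(X)/\Pic(X)) = \rk\,\Cl(X) - \rho_X$ (the inclusion $\Pic(X)\hookrightarrow\Cl(X)$ holding because $X$ is normal).

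Next I would exploit the smallness of the resolution $\pi$. Because $X$ is nodal and $\pi$ is a small resolution, the exceptional locus is a disjoint union of rational curves $C_1,\dots,C_r$, hence has codimension two in $\wt{X}$, so $\pi$ is an isomorphism in codimension one. An isomorphism in codimension one induces an isomorphism on divisor class groups, and since $\wt{X}$ is smooth we have $\Cl(\wt{X}) = \Pic(\wt{X})$; combining these gives $\Cl(X) \simeq \Pic(\wt{X})$ and therefore $\rk\,\Cl(X) = \rho_{\wt{X}}$. Finally, using that $\wt{X}$ is the blow up of the smooth threefold $Y$ in $\mu$ points, each point contributing one exceptional divisor (a $\P^2$), the Picard group splits as $\Pic(\wt{X}) \simeq \Pic(Y) \oplus \Z^{\mu}$, so $\rho_{\wt{X}} = \rho_Y + \mu$. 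Substituting back yields $\delta = \rk\,\Cl(X) - \rho_X = \rho_Y + \mu - \rho_X$, and hence $\rk\,\Kt_{-1}(X) = r - \delta = r - \mu + \rho_X - \rho_Y$, as required.

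The main obstacle is the identification $\Cl(X) \simeq \Pic(\wt{X})$: everything downstream is a routine rank count, but this step rests on the two facts that a small resolution of a node is an isomorphism in codimension one and that class groups are insensitive to modifications in codimension two. Once this isomorphism is in place, the blow up formula for the Picard group of $\wt{X}$ and the short exact sequence \eqref{eq:delta-L} combine immediately to give the stated formula.
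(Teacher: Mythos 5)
Your proof is correct and follows essentially the same route as the paper: smallness of $\pi$ gives $\Cl(X)\simeq\Cl(\wt{X})$, the blow-up description gives $\Cl(\wt{X})\simeq\Cl(Y)\oplus\Z^{\mu}$ (the paper states this for class groups, you for Picard groups, which coincide since $\wt{X}$ and $Y$ are smooth), and then Corollary \ref{cor:K_{-1}} (equivalently the sequence \eqref{eq:delta-L}) yields the rank formula. The extra details you supply — isomorphism in codimension one, rank additivity — are exactly what the paper leaves implicit.
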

\begin{proof}
Since $\pi:\wt{X}\to X$ is a small resolution, we have that $\Cl(\wt{X})\simeq\Cl(X)$. Moreover, since $\wt{X}$ is a smooth blow up of $Y$ at $\mu $ points, we see further that $\Cl(\wt{X})\simeq \Cl(Y)\oplus\Z^{\mu}$. The result is then a direct consequence of Corollary $\ref{cor:K_{-1}}$.
\end{proof}

\begin{example}\label{example:del-Pezzo-3folds}
According to Prokhorov \cite[Theorem 7.1]{prokhorov}, del Pezzo threefolds
of degree $1 \le d \le 5$, 
that is Fano threefolds of Picard rank one and
index two, with maximal
class group rank are obtained by blowing up
$8 - d$ general points $P_i$ on $\P^3$ followed by blowing down proper preimages of lines and twisted cubics passing through the points $P_i$
(the latter contraction is realized as an algebraic variety
by taking half-anticanonical model of the blow up).
The number of nodes of $X$ is 28 for $d=1$, 16 for $d=2$ and $8-d \choose 2$ for $3\leq d\leq 5$ \cite[Theorem 7.1 (iii)]{prokhorov}.

Using Lemma $\ref{lem:rank-K_-1-blowup}$ we see that the rank of $\Kt_{-1}$ is 21 for $d=1$, 10 for $d=2$ and $\frac{(8-d)(5-d)}{2}$ for $3\leq d\leq 5$. In particular, the cases $1\leq d \leq 4$ are not maximally nonfactorial (see also table $(\ref{table})$).
\end{example}

\section{Kawamata type semiorthogonal decompositions}
\label{sec:Kawamata}

In this section $X$ is a Gorenstein projective variety. 
The following definition is motivated by \cite{kawamata1} and \cite{kawamata2}.

\begin{definition}\label{def:KSOD}
We say that $X$ has a Kawamata type semiorthogonal decomposition 
if
\[
\Db(X) = \langle \AA , \BB_1, \ldots ,\BB_m\rangle
\]
is an admissible semiorthogonal decomposition, such that $\AA\subset \Dperf(X)$ and the $\BB_j$'s are equivalent to $\Db(R_j)$,
where the $R_j$'s are (possibly noncommutative) finite-dimensional $k$-algebras.
\end{definition}

\begin{remark}
Note that a smooth projective variety $X$ trivially admits a 
Kawamata type decomposition. Indeed, in this case $\Dperf(X) = \Db(X)$
and we can set $m = 0$, $\AA = \Db(X)$.
\end{remark}

\begin{remark}\label{rmk:rearrangement}
Any admissible decomposition of $\Db(X)$ into components which are subcategories of $\Dperf(X)$
and components equivalent to $\Db(R)$ can be rearranged to make a Kawamata decomposition. This follows from a result of Bondal and Kapranov \cite[Lemma 1.9]{bondal-kapranov}, which implies more generally that any admissible semiorthogonal decomposition can be mutated.
\end{remark}

\begin{theorem}\label{thm:kawamata-sod}
If $X$ admits a Kawamata type semiorthogonal decomposition 
\[
\Db(X) = \langle \AA ,\BB_1 \ldots ,\BB_m \rangle,
\]
then the following holds.

\noindent 1) There is an admissible semiorthogonal decomposition 
\[
\Dperf(X) = \langle \AA, \BB_1\cap\Dperf(X),\ldots , \BB_m \cap \Dperf(X) \rangle,
\]
and $\BB_j \cap \Dperf(X)$ is equivalent to $\Dperf(R_j)$ for all $1\leq j\leq m$.
The Serre functor on $\Dperf(X)$ induces Serre functors on $\AA$ and on all $\BB_j\cap\Dperf(X)$. 

\noindent 2) The finite-dimensional $k$-algebras $R_j$ are Gorenstein.

\noindent 3) There is an equivalence of singularity categories
$
\Dsg(X) \simeq \langle \Dsg(R_{1}),\ldots ,\Dsg(R_{m})\rangle 
$. Furthermore, if $X$ has only isolated singularities, then the decomposition above is completely orthogonal, that is
\begin{equation}\label{eqn:Dsg-decomp}
 \Dsg(X) \simeq \Dsg(R_{1}) \oplus \ldots \oplus \Dsg(R_{m}) \simeq \Dsg(R_1\times\ldots\times R_m).
\end{equation}
\end{theorem}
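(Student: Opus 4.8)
**

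The plan is to prove the three parts in sequence, with each part building on the previous. The central technical tools are the mutation/admissibility machinery developed in the preliminaries (Lemmas \ref{lem:Gor-admiss}, \ref{lem:Serre-adm}), the Gorenstein-Serre correspondence (Lemma \ref{lem:Gor-Serre}), and Orlov's singularity category construction as a Verdier quotient.

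\textit{Part 1.} First I would intersect the given Kawamata decomposition with $\Dperf(X)$. The key point is that $\AA \subset \Dperf(X)$ already, so $\AA$ survives unchanged. For each $\BB_j \simeq \Db(R_j)$, I expect $\BB_j \cap \Dperf(X)$ to correspond to $\Dperf(R_j)$ under the equivalence, because perfect complexes are intrinsically characterized (e.g.\ as the compact objects, or via finiteness of the Serre functor). To get the semiorthogonal decomposition of $\Dperf(X)$ I would argue that each component $\BB_j \cap \Dperf(X)$ is admissible in $\Dperf(X)$: since $X$ is Gorenstein projective, $\Dperf(X)$ is of finite type and has a Serre functor by Lemma \ref{lem:Gor-Serre}, so by Corollary \ref{cor:saturated-subcat-Db(X)} and Proposition \ref{lem:saturated-implies-admissible} admissibility in $\Db(X)$ passes to the perfect subcategory. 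Lemma \ref{lem:Serre-adm} then produces the induced Serre functors on $\AA$ and on each $\BB_j \cap \Dperf(X)$.

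\textit{Part 2.} Once I know $\BB_j \cap \Dperf(X) \simeq \Dperf(R_j)$ carries a Serre functor, the Gorenstein property of $R_j$ is immediate from the finite-dimensional-algebra case of Lemma \ref{lem:Gor-Serre} (Chen's result): existence of a Serre functor on $\Dperf(R_j)$ is equivalent to $R_j$ being Gorenstein.

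\textit{Part 3.} This is where the main work lies, and I expect the completely-orthogonal refinement to be the hardest step. I would pass to singularity categories by applying the Verdier quotient $(-)/\Dperf$ to the two decompositions of Part 1. Since $\AA \subset \Dperf(X)$ maps to zero, and since semiorthogonal decompositions descend to Verdier quotients by the corresponding perfect subcategories, I obtain $\Dsg(X) \simeq \langle \Dsg(R_1), \dots, \Dsg(R_m) \rangle$ with $\Dsg(R_j) = \BB_j / (\BB_j \cap \Dperf(X))$. The delicate part is upgrading semiorthogonality to \emph{complete} orthogonality when $X$ has isolated singularities. The idea is that $\Dsg(X)$ is then $\Hom$-finite and Calabi--Yau-$(n-1)$ by Proposition \ref{prop:CY}, so its Serre functor is simply a shift $[n-1]$. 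In a semiorthogonal decomposition $\langle \BB, \AA \rangle$ where the ambient Serre functor preserves each component (a shift certainly does), one has $\Hom(\AA, \BB[i]) = 0$ for all $i$ from semiorthogonality, and then Serre duality converts the reverse $\Hom$'s into the already-vanishing ones, forcing $\Hom(\BB, \AA[i]) = 0$ as well. This bilateral vanishing is exactly complete orthogonality, giving the direct sum decomposition $\Dsg(X) \simeq \Dsg(R_1) \oplus \cdots \oplus \Dsg(R_m)$. The final identification with $\Dsg(R_1 \times \cdots \times R_m)$ follows since the singularity category of a product of algebras is the direct sum of the singularity categories.

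The step I anticipate as the main obstacle is the complete-orthogonality argument in Part 3: one must be careful that the Serre functor of $\Dsg(X)$ (a shift) genuinely stabilizes each component $\Dsg(R_j)$ so that the Serre-duality flip applies symmetrically, and that the $\Hom$-finiteness from Proposition \ref{prop:CY} legitimately licenses the duality. A secondary subtlety is verifying that taking Verdier quotients respects the semiorthogonal structure — i.e.\ that the quotient functor sends the decomposition of $\Dperf(X)$-relative data to a genuine semiorthogonal decomposition of $\Dsg(X)$ — which I would justify via the compatibility of admissible decompositions with localization.
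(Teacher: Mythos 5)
Your Parts 2 and 3 follow the paper's own route: Part 2 is exactly the appeal to Chen's result (Lemma \ref{lem:Gor-Serre}), and Part 3 --- descent of the decomposition to the Verdier quotient followed by the Calabi--Yau/Serre-duality flip --- is precisely the ``standard argument going back to Bridgeland'' \cite{bridgeland} that the paper invokes; your worry about the Serre functor stabilizing the components is resolved by the trivial observation that a shift preserves any triangulated subcategory. The genuine problem is in Part 1, on which everything else rests.

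The mechanism you propose for admissibility fails. Corollary \ref{cor:saturated-subcat-Db(X)} applies to subcategories that are admissible in $\Db(X)$; the categories $\BB_j\cap\Dperf(X)$ are not known to be such --- only the $\BB_j$ are. Nor can one patch this by proving that $\BB_j\cap\Dperf(X)\simeq\Dperf(R_j)$ is saturated, because it is not: whenever $R_j$ has infinite global dimension (the only interesting case, since otherwise $\Dsg(R_j)=0$), the exact functor $\RHom_{R_j}(-,M)\colon \Dperf(R_j)^{\circ}\to\Db(k)$ for a non-perfect $M\in\Db(R_j)$ (e.g.\ the simple module over $k[x]/(x^2)$) is not representable by a perfect complex, as evaluating a putative representing object on $R_j$ and its endomorphisms would force it to be isomorphic to $M$. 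So Proposition \ref{lem:saturated-implies-admissible} cannot be applied to these components. Second, and more basically, you never address why the intersected components \emph{generate} $\Dperf(X)$, i.e.\ why the projection functors of the ambient decomposition send perfect complexes to perfect complexes. This is automatic only for the $\AA$-factor (as $\AA\subset\Dperf(X)$), and hence for $m=1$, where the $\BB$-factor is then a cone of perfect complexes; for $m\geq 2$ it is the crux of the statement. Finally, your ``intrinsic characterization'' of perfectness is off: perfect complexes are the compact objects of the \emph{unbounded} category $\DD(\Qcoh(X))$, not of $\Db(X)$. What is actually needed --- and what the paper uses --- is Orlov's characterization of $\Dperf(X)\subset\Db(X)$ as the homologically finite objects \cite[Propositions 1.10 and 1.11]{orlov-hf}, together with its algebra analogue for $\Dperf(R_j)\subset\Db(R_j)$ \cite[Proposition 2.18]{iyama-wemyss}; admissibility of $\BB_j$ in $\Db(X)$ is what allows one to transfer homological finiteness across the equivalence $\BB_j\simeq\Db(R_j)$, and Orlov's results then yield the decomposition of $\Dperf(X)$, its admissibility, and the induced decomposition of $\Dsg(X)$ in Part 3, all in one stroke.
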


\begin{proof}
\noindent 1) The decomposition and its admissibility follows immediately from 
Orlov's characterization of perfect complexes as homologically finite objects
\cite[Proposition 1.10 and 1.11]{orlov-hf}. 
Moreover, by the analogous characterization
of $\Dperf(R_j)$ in $\Db(R_j)$ 
\cite[Proposition 2.18]{iyama-wemyss} and by admissibility of $\BB_j$ it is easy to see that $\Dperf(R_j)=\BB_j\cap\Dperf(X)$.
By Lemma \ref{lem:Serre-adm} it follows that the components $\AA$ and $\BB_j\cap\Dperf(X)$ have Serre functors.

\noindent 2) By 1) we see that $\Dperf(R_j)$ has a Serre functor, and by Lemma
\ref{lem:Gor-Serre} this is equivalent to $R_j$ being Gorenstein.

\noindent 3) The decomposition of $\Dsg(X)$ follows by \cite[Proposition 1.10]{orlov-hf}.
Let us assume now that $X$ has isolated singularities.
By Proposition \ref{prop:CY}, $\Dsg(X)$ is a Calabi-Yau category.
By a standard argument going back to Bridgeland \cite{bridgeland} it is easy to see that in this case decomposition is completely orthogonal. The second equivalence in $(\ref{eqn:Dsg-decomp})$ is clear.
\end{proof}

\begin{corollary}\label{cor:K_{-1}-obstruction}
If $X$ admits a Kawamata type semiorthogonal decomposition, then $\Dsg(X)$ is idempotent complete, or, equivalently, $\Kt_{-1}(X) = 0$.
\end{corollary}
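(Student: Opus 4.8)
The plan is to deduce Corollary \ref{cor:K_{-1}-obstruction} directly from the structural results already established, so that almost no new work is required. The starting point is Theorem \ref{thm:kawamata-sod} 3), which gives an equivalence of singularity categories
\[
\Dsg(X) \simeq \langle \Dsg(R_1), \ldots, \Dsg(R_m) \rangle.
\]
The key observation is that idempotent completeness is detected on the level of this semiorthogonal decomposition. First I would invoke Lemma \ref{lem:semiorth-complete}, which asserts that a triangulated category admitting a semiorthogonal decomposition is idempotent complete if and only if every component is idempotent complete. Thus $\Dsg(X)$ is idempotent complete provided each $\Dsg(R_j)$ is.

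The next step is to establish idempotent completeness of the components $\Dsg(R_j)$. This is exactly the content of Lemma \ref{lem:algebras-complete} (due to Chen), which guarantees that for any finite-dimensional $k$-algebra $R$ the singularity category $\Dsg(R)$ is idempotent complete. Since the $R_j$ appearing in a Kawamata type decomposition are finite-dimensional $k$-algebras by Definition \ref{def:KSOD}, each $\Dsg(R_j)$ is idempotent complete. Combining this with Lemma \ref{lem:semiorth-complete} and the equivalence from Theorem \ref{thm:kawamata-sod} 3), I conclude that $\Dsg(X)$ is idempotent complete.

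Finally I would translate idempotent completeness into the vanishing statement. By Lemma \ref{lem:SES_K_{-1}} (the short exact sequence \eqref{SES} together with its appended assertion), the category $\Dsg(X) = \Dsg_X(X)$ is idempotent complete if and only if $\Kt_{-1}(X) = 0$. This gives the stated equivalence and completes the argument.

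I do not anticipate a genuine obstacle here, since every ingredient has already been assembled: the decomposition of the singularity category, Chen's completeness result for finite-dimensional algebras, the compatibility of idempotent completeness with semiorthogonal decompositions, and the Thomason--Schlichting--Orlov characterization of idempotent completeness via $\Kt_{-1}$. The only point requiring a moment of care is that one should apply Lemma \ref{lem:semiorth-complete} to the (non-orthogonal) semiorthogonal decomposition from Theorem \ref{thm:kawamata-sod} 3) rather than to the completely orthogonal refinement \eqref{eqn:Dsg-decomp}, so that the conclusion holds without assuming $X$ has isolated singularities; the completely orthogonal form is a stronger statement available only in the isolated case and is not needed for this corollary.
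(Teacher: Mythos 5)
Your proposal is correct and follows exactly the paper's own proof: Theorem \ref{thm:kawamata-sod} 3) to decompose $\Dsg(X)$, Lemma \ref{lem:algebras-complete} for idempotent completeness of each $\Dsg(R_j)$, Lemma \ref{lem:semiorth-complete} to conclude $\Dsg(X)$ is idempotent complete, and Lemma \ref{lem:SES_K_{-1}} to translate this into $\Kt_{-1}(X)=0$. Your remark that one should use the general semiorthogonal decomposition rather than the completely orthogonal form \eqref{eqn:Dsg-decomp} (so as not to assume isolated singularities) is a correct and careful reading of the same argument.
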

\begin{proof}
By Theorem \ref{thm:kawamata-sod} (3),
the singularity category $\Dsg(X)$
admits a semiorthogonal decomposition
into singularity categories of finite-dimensional
$k$-algebras.
Each of these categories is idempotent complete
by Lemma \ref{lem:algebras-complete},
and
Lemma \ref{lem:semiorth-complete}
implies that $\Dsg(X)$ is itself idempotent
complete.
The vanishing of $\Kt_{-1}(X)$ follows
from Lemma $\ref{lem:SES_K_{-1}}$.
\end{proof}

\begin{example}
If $X$ has trivial canonical bundle,
then it admits a Kawamata decomposition if and only if $X$ is smooth.
Indeed if we assume that $X$ has a Kawamata decomposition, it follows by Theorem $\ref{thm:kawamata-sod}$ 1) that there is an induced semiorthogonal decomposition of $\Dperf(X)$. While the Serre functor on $\Dperf(X)$ is just the shift by $n=\dim(X)$, we see that for a finite-dimensional algebra as in Definition $\ref{def:KSOD}$, we have that 
\[
\Hom_{R_j}(R_j, R_j)={\Ext^n_{R_j}}(R_j, R_j)^{\star},
\]
which is only possible if $\dim(X)=0$
or $m=0$. In both cases $X$ is smooth.
\end{example}

The next proposition shows that admissibility is automatic in the case when we have only one algebra.

\begin{proposition}\label{prop:admiss}
Assume that $X$ has a semiorthogonal decomposition 
\[
\Db(X)=\langle \AA, \BB\rangle ,
\]
where $\AA \subset \Dperf(X)$, $\BB \simeq \Db(R)$ 
(resp. $\BB \subset \Dperf(X)$, $\AA \simeq \Db(R)$), and $R$ is a finite-dimensional
algebra.
Then $\AA$ and $\BB$ are admissible subcategories in $\Db(X)$
so that the semiorthogonal decomposition $\Db(X) = \langle \AA, \BB \rangle$
(resp. $\Db(X) = \langle \BB \otimes \omega_X, \AA \rangle$) is of Kawamata type.
\end{proposition}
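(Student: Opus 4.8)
The plan is to reduce both cases of the proposition to Lemma \ref{lem:Gor-admiss}. That lemma already asserts that whenever $X$ is Gorenstein projective and $\Db(X) = \langle \AA, \BB \rangle$ is a semiorthogonal decomposition with one of the two components contained in $\Dperf(X)$, then both components are admissible and the pair can be mutated to $\Db(X) = \langle \BB \otimes \omega_X, \AA \rangle$. Since by the standing assumption of this section $X$ is Gorenstein projective, the only work left is to match the hypotheses of that lemma and to check that the resulting decompositions satisfy Definition \ref{def:KSOD}.

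First I would treat the case $\AA \subset \Dperf(X)$, $\BB \simeq \Db(R)$. Here the perfect component $\AA$ is already the first component, so Lemma \ref{lem:Gor-admiss} applies and yields immediately that $\AA$ and $\BB$ are admissible. Since $\AA \subset \Dperf(X)$ and $\BB \simeq \Db(R)$ with $R$ finite-dimensional, the decomposition $\Db(X) = \langle \AA, \BB \rangle$ is literally of the form required by Definition \ref{def:KSOD} with $m = 1$, so nothing further is needed.

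Next I would treat the case $\BB \subset \Dperf(X)$, $\AA \simeq \Db(R)$. Lemma \ref{lem:Gor-admiss} again applies and gives admissibility of both $\AA$ and $\BB$, together with the mutated decomposition $\Db(X) = \langle \BB \otimes \omega_X, \AA \rangle$. To identify this as Kawamata type I would observe that $\BB \otimes \omega_X$ remains inside $\Dperf(X)$: since $X$ is Gorenstein, $\omega_X$ is a line bundle, and tensoring by a line bundle is an autoequivalence of $\Db(X)$ that preserves the subcategory of perfect complexes. With $\BB \otimes \omega_X \subset \Dperf(X)$ playing the role of the perfect component and $\AA \simeq \Db(R)$ the single algebra component, the mutated decomposition meets the conditions of Definition \ref{def:KSOD}.

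Since the entire argument rests on Lemma \ref{lem:Gor-admiss}, I do not expect any substantial obstacle. The only point requiring attention is the bookkeeping in the second case: one must pass to the mutated decomposition $\langle \BB \otimes \omega_X, \AA \rangle$ precisely in order to place the perfect component first, and one must note that this mutation does not move $\BB \otimes \omega_X$ out of $\Dperf(X)$. Both are immediate consequences of $\omega_X$ being a line bundle on the Gorenstein variety $X$.
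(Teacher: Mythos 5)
Your proof is correct and takes essentially the same route as the paper, whose entire proof of this proposition is the single line ``This follows from Lemma \ref{lem:Gor-admiss}.'' The extra bookkeeping you supply --- that $\BB \otimes \omega_X$ remains in $\Dperf(X)$ because tensoring by the line bundle $\omega_X$ is an autoequivalence preserving perfect complexes, so the mutated decomposition satisfies Definition \ref{def:KSOD} --- is precisely the implicit content of that reduction.
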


\begin{proof}
This follows from Lemma \ref{lem:Gor-admiss}.
\end{proof}

\begin{remark} Kawamata type decompositions generalize
tilting objects in the following sense.
Recall that a classical tilting object $\EE$ of $\DD(\Qcoh(X))$ is a perfect complex of $\DD(\Qcoh(X))$, such that it generates $\DD(\Qcoh(X))$ (i.e. if $\Hom(\EE ,\FF)=0$, then $\FF\simeq 0$) and such that $\Hom(\EE ,\EE[i])=0$ for all $i\neq 0$. It is well known that, if $\DD(\Qcoh(X))$ possesses a classical tilting object $\EE$, then there is an equivalence $\DD(\Qcoh(X))\simeq\DD(\Mod\textrm{-}R)$ which restricts to an equivalence $\Db(X)\simeq\Db(R)$, where $R$ is the finite-dimensional algebra $\End(\EE)$ (see e.g. \cite[Theorem 7.6 (2)]{hille-van-den-bergh}). This means that $X$ has a Kawamata semiorthogonal decomposition with trivial $\AA\subset\Dperf(X)$ part as soon as $\DD(\Qcoh(X))$ has a classical tilting object.
\end{remark}

We collect the known examples of Gorenstein
projective
varieties with Kawamata type semiorthogonal decompositions.
We start in dimension one.

\begin{theorem}[Burban \cite{burban}]\label{thm:burban}
Let $X$ be a nodal tree of projective lines, 
that is a connected nodal curve with all irreducible components
isomorphic to $\P^1$ and with the dual graph $\Gamma$ of
$X$ forming a tree.
Then $\Db(X)$ has a tilting object, and furthermore admits
a Kawamata type
semiorthogonal decomposition
\[
\Db(X) = \langle \OO_X, \Db(R_\Gamma) \rangle.
\]
The algebra $R_\Gamma$ is
the path algebra of the quiver $Q$
with relations, obtained
by the following construction from $\Gamma$: $Q$ has the same vertices as $\Gamma$
and for
each two vertices $p$, $q$ in $\Gamma$
connected by an edge there
is an arrow $a$ from $p$ to $q$
and an arrow $a^*$ from $q$ to $p$ in $Q$. The relations
are that all compositions $a a^*$
and $a^* a$ are set equal zero.
\end{theorem}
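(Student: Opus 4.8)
The plan is to prove Burban's theorem in three stages: construct a tilting object on $X$, identify its endomorphism algebra with the path algebra $R_\Gamma$, and then deduce the Kawamata type decomposition with the asserted trivial perfect part $\langle \OO_X \rangle$.

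\textbf{Step 1: Constructing the tilting object.} First I would exhibit an explicit perfect complex $\EE$ that generates $\DD(\Qcoh(X))$ and has no self-extensions. Since $X$ is a nodal tree of $\P^1$'s, the natural candidate is a direct sum $\EE = \bigoplus_v \LL_v$ of line bundles indexed by the vertices (components) of $\Gamma$, where each $\LL_v$ has degree $0$ or $1$ on each component, chosen compatibly along the tree so that on every component $\P^1$ the restriction is either $\OO$ or $\OO(1)$. The tree condition on $\Gamma$ is essential here: because $\Gamma$ has no cycles, one can orient the edges consistently and assign degrees so that, on each component, the summands restrict to $\OO \oplus \OO(1)$ (no higher twists), which is exactly Beilinson's tilting bundle on a single $\P^1$. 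The hard technical point is the vanishing $\Hom_{\Db(X)}(\EE, \EE[i]) = 0$ for $i \neq 0$; this I would verify by combining the local computation at each node with a Mayer--Vietoris / normalization argument, using the long exact sequence relating sheaf cohomology on $X$ to cohomology on the normalization (the disjoint union of the $\P^1$'s) and the node data. Generation follows because the $\LL_v$ generate the derived category locally on each component and the tree is connected.

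\textbf{Step 2: Identifying the endomorphism algebra.} Once $\EE$ is a tilting object, by the cited result \cite[Theorem 7.6 (2)]{hille-van-den-bergh} there is an equivalence $\Db(X) \simeq \Db(R)$ with $R = \End(\EE)$. I would then compute $\End(\EE) = \bigoplus_{v,w} \Hom(\LL_v, \LL_w)$ explicitly. The idempotents corresponding to the summands give the vertices of the quiver $Q$. For two components meeting at a node, the morphism spaces $\Hom(\LL_v, \LL_w)$ and $\Hom(\LL_w, \LL_v)$ each contribute a one-dimensional space of maps supported at the node, yielding the paired arrows $a$ and $a^*$; the relations $aa^* = a^*a = 0$ come from the fact that a morphism factoring through the node and back vanishes (the composite is zero in the local ring $k[x,y]/(xy)$ of the node, reflecting that the two branches meet transversally). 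This is a direct but careful bookkeeping computation identifying $\End(\EE)$ with the path algebra of $Q$ modulo the stated relations, i.e.\ with $R_\Gamma$.

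\textbf{Step 3: Extracting the Kawamata decomposition.} Finally I would upgrade the equivalence $\Db(X) \simeq \Db(R_\Gamma)$ to the claimed decomposition $\Db(X) = \langle \OO_X, \Db(R_\Gamma) \rangle$. Here one singles out the structure sheaf $\OO_X$, which is a perfect complex, as an exceptional-type object generating the subcategory $\AA = \langle \OO_X \rangle \subset \Dperf(X)$, and identifies the right orthogonal with a copy of $\Db(R_\Gamma)$ for a slightly smaller algebra. Concretely, I expect the tilting bundle to decompose as $\EE = \OO_X \oplus \EE'$ with $\OO_X$ the summand associated to the fixed ``base'' vertex of the rooted tree, so that the general tilting equivalence restricts to give the semiorthogonal decomposition with first component $\langle \OO_X \rangle$; admissibility is then automatic by Proposition \ref{prop:admiss} since there is a single algebra factor (after grouping), using that $X$ is Gorenstein (nodal curves are Gorenstein, being local complete intersections).

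The main obstacle I anticipate is Step 1: arranging the line bundle degrees coherently across the whole tree and verifying the higher-$\Hom$ vanishing globally. The tree hypothesis is precisely what makes this possible — on a curve with a cycle in its dual graph there is a nontrivial $\Kt_{-1}$ obstruction (Corollary \ref{cor:nodal}) and no such tilting object can exist — so the proof must use acyclicity of $\Gamma$ in an essential way, which I would foreground when checking both the degree assignment and the self-extension vanishing.
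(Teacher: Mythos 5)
Your overall strategy (a tilting bundle built out of line bundles, an explicit computation of its endomorphism algebra, and then reading off the semiorthogonal decomposition) is exactly the strategy behind the paper's proof: the paper's proof is one line, deferring to Burban's proof of the chain case and asserting that the same construction works for trees. However, your execution contains a fatal counting error. Your proposed tilting object $\EE=\bigoplus_v \LL_v$ has one indecomposable summand per component, i.e.\ $N$ summands, and in Step 3 you even make $\OO_X$ one of them, leaving $N-1$ objects for the algebra part (your ``slightly smaller algebra''). This cannot work. The classes $[\OO_{X_v}]$, $v\in\Gamma$, together with $[\OO_{pt}]$ are linearly independent in $\Gt_0(X)=\Kt_0(\Db(X))$ (separate them by the $N$ generic ranks and by $\chi$), so $\rk \Gt_0(X)\ge N+1$; but if $\EE$ were tilting, then $\Db(X)\simeq\Db(\End(\EE))$ would force $\Gt_0(X)\simeq \Gt_0(\End(\EE))\simeq\Z^{s}$ with $s\le N$ the number of simples. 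So no direct sum of $N$ indecomposables can generate, and your generation claim in Step 1 (``locally on each component'') is false. Concretely, with the natural choice of $\LL_v$ (degree $1$ on $X_v$, degree $0$ elsewhere) one computes $\Hom^\bullet(\LL_v,\OO_X)=H^\bullet(X,\LL_v^{-1})=0$ for every $v$ (use $p_a(X)=0$, valid because $\Gamma$ is a tree), so $\OO_X$ is a nonzero object in the right orthogonal of your $\EE$. Consistently, the theorem's decomposition $\langle\OO_X,\Db(R_\Gamma)\rangle$ requires $1+N$ generators, since $R_\Gamma$ has one vertex for \emph{each} component; there is no smaller algebra to fall back on.

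The repair, which is Burban's actual construction extended to trees, is to take $T=\OO_X\oplus\bigoplus_{v}\LL_v$ with $N+1$ summands, $\LL_v$ of degree $1$ on $X_v$ and $0$ on all other components. Then: (i) $\Hom(\LL_v,\LL_w)\cong k$ for \emph{every} ordered pair $(v,w)$, not only adjacent ones -- the generator is a section vanishing identically on the subtree on the $v$-side, and it equals the composite of the edge morphisms along the geodesic from $v$ to $w$; compositions that backtrack across an edge vanish because the two sections involved vanish on complementary subtrees. This identifies $\End\bigl(\bigoplus_v\LL_v\bigr)\cong R_\Gamma$, and it also corrects your Step 2: the relations $aa^*=a^*a=0$ kill only immediate backtracking, so $R_\Gamma$ has nonzero paths (geodesics) between all pairs of vertices, of total dimension $N^2$; an algebra with Hom's only between adjacent vertices would be the radical-square-zero algebra, which is not $R_\Gamma$ once $\Gamma$ has three or more vertices. (ii) $\Ext^{>0}(T,T)=0$ and $\Hom^\bullet(\LL_v,\OO_X)=0$ follow from Riemann--Roch on $X$ with $p_a(X)=0$. (iii) $T$ generates (e.g.\ by induction on leaves of the tree, peeling off one component at a time). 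The vanishing $\Hom^\bullet(\LL_v,\OO_X)=0$ makes $\End(T)$ a one-point extension of $R_\Gamma$, which yields simultaneously the tilting statement and the decomposition $\Db(X)=\langle\OO_X,\Db(R_\Gamma)\rangle$; admissibility then follows from Proposition \ref{prop:admiss} as you say, since nodal curves are Gorenstein.
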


\begin{proof}
The proof is the same as that of Theorem 2.1 in \cite{burban},
where only chains of projective lines are considered.
\end{proof}

\begin{example}\label{example:burban-example}
Let $X=X_1\cup X_2$ be the $A_2$ tree of projective
lines, that is a union of 2 copies of $\P^1$ intersecting transversely. 
Then the algebra $R_\Gamma$ in Theorem \ref{thm:burban} has the form
\begin{equation}\label{eqn:burban-kawamata-algebra}
\begin{tikzcd}
 & R: & 1  \arrow[r,bend right,swap,"a"] & 2  \arrow[l,bend right,swap,"a^*"] & a^* a=0,\;\;\;\; a a^*=0 .
\end{tikzcd}
\end{equation}

By Theorem \ref{thm:kawamata-sod} 
we have $\Dsg(X) \simeq \Dsg(R) \simeq 
\ul{\MCM}(R)$
(we used Buchweitz' equivalence
between the singularity category
and the stable category of MCM modules
in the Gorenstein case for the second
equivalence).
On the other hand, by
\cite[Proposition 1.14]{orlov-sing-1},
we have an equivalence
\[
\Dsg(X) \simeq \Dsg(A)  
\]
where $A = k[x,y]/(xy)$ is the ring
considered in Example \ref{ex:CY0}.

Explicitly the generators
of the singularity category considered
in Example \ref{ex:CY0} correspond to 
the two MCM $R$-modules which are the
two simple modules given by
the vertices of the quiver.
\end{example}

The following result
pins down the interplay between
geometry, Kawamata decompositions, and $\Kt_{-1}$
in the case of nodal curves.

\begin{corollary}\label{cor:nodal-curve-characterisation}
Let $C$ be a connected nodal projective curve
such that all its irreducible components
are rational curves. Then the following are equivalent:

\noindent 1) $C$ is a nodal tree of projective lines.

\noindent 2) $\Db(C)$ admits a Kawamata type semiorthogonal decomposition.

\noindent 3) $\Kt_{-1}(C)=0$.
\end{corollary}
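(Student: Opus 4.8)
The plan is to prove the equivalence of the three conditions by establishing a cycle of implications, leaning heavily on the results already developed in the paper for nodal curves and Kawamata decompositions. The statement concerns a connected nodal projective curve $C$ whose irreducible components are all rational. I expect the implications $(1) \Rightarrow (2)$ and $(2) \Rightarrow (3)$ to be essentially immediate from earlier results, with the genuine content residing in $(3) \Rightarrow (1)$.

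First I would show $(1) \Rightarrow (2)$: this is precisely the content of Theorem \ref{thm:burban} (Burban), which asserts that a nodal tree of projective lines admits a Kawamata type semiorthogonal decomposition $\Db(C) = \langle \OO_C, \Db(R_\Gamma) \rangle$. Nothing further is needed here. Next, $(2) \Rightarrow (3)$ follows directly from Corollary \ref{cor:K_{-1}-obstruction}: the existence of a Kawamata type decomposition forces $\Dsg(C)$ to be idempotent complete, equivalently $\Kt_{-1}(C) = 0$. So these two steps reduce the work to closing the loop.

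The main step, and the one I expect to be the crux, is $(3) \Rightarrow (1)$. The key tool is Corollary \ref{cor:nodal}, which computes $\Kt_{-1}(C) \simeq \Z^{\lambda}$ where $\lambda = \lambda(\Gamma)$ is the first Betti number of the dual graph $\Gamma = \Gamma(C)$. The hypothesis $\Kt_{-1}(C) = 0$ then forces $\lambda = 0$. Since $C$ is connected, $\Gamma$ is a connected graph, and a connected graph with first Betti number zero is precisely a tree. Here I must be careful: in the dual graph a self-intersection (nodal) point on a single component contributes a loop, which would raise $\lambda$; so $\lambda = 0$ rules out loops as well as cycles, meaning each component is smooth and the components meet in a tree pattern. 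Combined with the standing hypothesis that every irreducible component of $C$ is rational, the absence of self-intersections forces each component to be a smooth rational curve, i.e. isomorphic to $\P^1$. Thus $C$ is a connected nodal curve whose components are copies of $\P^1$ glued along a tree-shaped dual graph, which is exactly the definition of a nodal tree of projective lines.

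The only delicate point to verify is the passage from ``rational component with no self-intersection node'' to ``smooth $\P^1$'': an irreducible rational curve with at worst nodal singularities but no node lying on it (as recorded by the dual graph) is necessarily smooth, hence normalizes to its smooth model $\P^1$ and, being already smooth, is isomorphic to $\P^1$. Once this is settled, the three implications $(1) \Rightarrow (2) \Rightarrow (3) \Rightarrow (1)$ close the cycle and the equivalence follows. The proof is therefore short, with the computational heart entirely outsourced to Corollary \ref{cor:nodal}; the conceptual care lies in correctly reading the tree condition (no cycles \emph{and} no loops) off the vanishing of the first Betti number.
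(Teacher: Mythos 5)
Your proposal is correct and follows exactly the paper's own proof: the same cycle $(1)\Rightarrow(2)$ via Theorem \ref{thm:burban}, $(2)\Rightarrow(3)$ via Corollary \ref{cor:K_{-1}-obstruction}, and $(3)\Rightarrow(1)$ via Corollary \ref{cor:nodal}. Your extra care in reading the tree condition off $\lambda(\Gamma)=0$ (no cycles \emph{and} no loops, hence smooth rational components isomorphic to $\P^1$) is exactly the unwinding the paper leaves implicit.
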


\begin{proof}
1) $\implies$ 2)
is Theorem $\ref{thm:burban}$.

2) $\implies$ 3) follows from 
Corollary $\ref{cor:K_{-1}-obstruction}$.

3) $\implies$ 1) follows from Corollary $\ref{cor:nodal}$.
\end{proof}

The following result gives a source of examples of Kawamata type semiorthogonal decompositions in dimension two.

\begin{theorem}[Karmazyn-Kuznetsov-Shinder \cite{karmazyn-kuznetsov-shinder}]\label{thm:toric-surfaces}
Let $X$ be a projective Gorenstein toric surface. Let $n_1 ,\ldots , n_m$ be the orders of the cyclic quotient singularities of $X$. Then $X$ has a Kawamata type semiorthogonal decomposition if and only if $\Kt_{-1}(X)=0$ and in this case the decomposition is of the form 
\[
\Db(X)\simeq\langle\AA ,\Db(R_1),\ldots\Db(R_m)\rangle ,
\]
where the category $\AA\subset\Dperf(X)$ is a collection of exceptional objects and such that $R_i=k[z]/(z^{n_i})$.
\end{theorem}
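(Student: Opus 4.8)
The plan is to prove both directions of the equivalence, deferring the harder ``if'' direction (existence of the decomposition) to the constructive part and treating the ``only if'' direction as an immediate consequence of our $\Kt$-theoretic obstruction.

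\medskip

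\noindent\textbf{The easy direction.} First I would observe that if $X$ admits a Kawamata type semiorthogonal decomposition, then by Corollary \ref{cor:K_{-1}-obstruction} we get $\Kt_{-1}(X) = 0$ immediately; this requires nothing special about toric surfaces. So the content of the theorem is the converse together with the explicit description of the decomposition.

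\medskip

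\noindent\textbf{Setting up the converse.} For the ``if'' direction, I would first reduce the geometry to a combinatorial/toric picture. A projective Gorenstein toric surface $X$ has only cyclic quotient singularities, and the Gorenstein condition forces each such singularity to be of type $\frac{1}{n}(1,n-1)$, i.e. an $A_{n-1}$-singularity; in particular each complete local ring is $\wh{\OO}_{X,p} \simeq k[[u,v]]/(uv - t^{n_i})$ up to the standard normalization. The plan is to resolve $X$ minimally via $\pi : \wt{X} \to X$, where $\wt{X}$ is a smooth projective toric surface, hence carries a full exceptional collection. The strategy is then to push this collection forward and package the exceptional curves over each singular point into the local algebra $R_i$. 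The key local computation is that the singularity category of an $A_{n_i-1}$ surface singularity is governed by the algebra $k[z]/(z^{n_i})$: this is where the specific form $R_i = k[z]/(z^{n_i})$ enters, and I would verify it using Kn\"orrer periodicity (Theorem \ref{Thm:Knorrer}) reducing the two-dimensional $A_{n_i-1}$ singularity to the zero-dimensional algebra $k[z]/(z^{n_i})$, whose singularity category matches.

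\medskip

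\noindent\textbf{Using the $\Kt_{-1}$ hypothesis to glue.} The crucial role of the hypothesis $\Kt_{-1}(X) = 0$ is to guarantee that the local pieces assemble into a \emph{global} admissible decomposition rather than merely an idempotent-complete one. By Lemma \ref{lem:SES_K_{-1}}, vanishing of $\Kt_{-1}(X)$ is equivalent to idempotent completeness of $\Dsg(X)$, and by Theorem \ref{thm:kawamata-sod} (3) combined with Proposition \ref{prop:CY} (the Calabi--Yau property in the isolated singularity case) the singularity category splits as a completely orthogonal sum $\bigoplus_i \Dsg(k[z]/(z^{n_i}))$. The plan is to lift this splitting of $\Dsg(X)$ to the required decomposition of $\Db(X)$ by exhibiting explicit objects: for each singular point one produces the object(s) generating $\Db(R_i)$ from pushforwards of line bundles on the exceptional locus, and the remaining exceptional collection on $\wt{X}$ descends to the perfect part $\AA \subset \Dperf(X)$. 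I expect the main obstacle to be precisely this gluing step — showing that the locally-defined tilting/exceptional data on the resolution descends to honest admissible components on the singular surface $X$, and that the obstruction to descent is exactly measured by $\Kt_{-1}(X)$, which vanishes by hypothesis. This is the technical heart, and it is exactly the content of the cited work \cite{karmazyn-kuznetsov-shinder}; I would therefore structure the proof to isolate the local model computation (which is self-contained via Kn\"orrer periodicity) from the global descent (which I would attribute to \cite{karmazyn-kuznetsov-shinder}).
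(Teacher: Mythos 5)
Your ``easy'' direction agrees with the paper (it is exactly Corollary \ref{cor:K_{-1}-obstruction}), but your converse has two genuine problems. First, the step where you split $\Dsg(X)$ is circular: you invoke Theorem \ref{thm:kawamata-sod} (3), whose \emph{hypothesis} is that $X$ already admits a Kawamata type decomposition --- precisely what you are trying to construct. The non-circular route to the splitting is Proposition \ref{prop:filtration-Ksg1} 1) (the idempotent completion of $\Dsg(X)$ is the sum of the local singularity categories $\Dsg(\wh{\OO}_{X,p})$) combined with Lemma \ref{lem:SES_K_{-1}} (vanishing of $\Kt_{-1}$ makes $\Dsg(X)$ equal to its completion). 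Moreover, even with that splitting in hand, ``lifting'' a decomposition of the Verdier quotient $\Dsg(X)$ to an admissible decomposition of $\Db(X)$ is not a step that can be carried out in general, and it is not how the cited work proceeds; the paper never attempts such a lift.

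Second, and more importantly, your deferral to \cite{karmazyn-kuznetsov-shinder} does not actually close the argument, because the results you would be citing ([KKS, Corollary 5.10] for the decomposition and [KKS, Theorem 2.12] for admissibility) are stated under the hypothesis $\Br(X)=0$, not $\Kt_{-1}(X)=0$. The entire content of the paper's proof of this theorem is the bridge between these two hypotheses: Proposition \ref{prop:surface-Br}, which shows $\Kt_{-1}(X)\simeq\Br(X)$ for normal rational projective surfaces with rational singularities (by comparing Weibel's exact sequence \eqref{eqn:secret-exact-sequence} for $\Kt_{-1}$ with Bright's computation of $\Br$ via the resolution). Your assertion that ``the obstruction to descent is exactly measured by $\Kt_{-1}(X)$'' is precisely this missing proposition, asserted rather than proved; without it, the hypothesis $\Kt_{-1}(X)=0$ cannot be fed into the cited results. (Your local computation via Kn\"orrer periodicity identifying $\Dsg$ of an $A_{n_i-1}$ surface singularity with $\Dsg(k[z]/(z^{n_i}))$ is fine, but it plays no role in the paper's argument, which gets the algebras $R_i$ directly from [KKS].)
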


\begin{proof}
By Proposition \ref{prop:surface-Br} 
we have $\Br(X) = \Kt_{-1}(X)$.
If $\Kt_{-1}(X) = 0$, the semiorthogonal decomposition in Theorem $\ref{thm:toric-surfaces}$ is \cite[Corollary 5.10]{karmazyn-kuznetsov-shinder} and admissibility of the components is \cite[Theorem 2.12]{karmazyn-kuznetsov-shinder}.

Conversely, existence of a Kawamata
type decomposition implies $\Kt_{-1}(X) = 0$
by Corollary \ref{cor:K_{-1}-obstruction}.
\end{proof}

For threefolds, we have the following two Fano examples due to Kawamata.

\begin{example}[Kawamata]\label{example:Kawamata}

\noindent (1) Let $X$ be the nodal quadric threefold in $\P^4$ with the equation $xy-zw=0$. In \cite[Example 5.6]{kawamata1} (see also \cite[Example 7.1]{kawamata2}) it has been shown that there is an admissible semiorthogonal decomposition
\[
\Db(X)\simeq\langle\OO_X(-2H),\OO_X(-H),\Db(R),\OO_X\rangle ,
\]
where $\OO_X(H)$ is a hyperplane section bundle and $R$ is the same algebra as $(\ref{eqn:burban-kawamata-algebra})$ in Example $\ref{example:burban-example}$. By Remark
\ref{rmk:rearrangement}, $X$ has a Kawamata type decomposition. 
    
\noindent (2) Let $\wt{X}$ be the blow up of 
two points in $\P^3$ and let $L \subset \wt{X}$ be the strict transform of a line passing through the 
two points. Let $X$ be the contraction of $L$ to a node
given by the half-anticanonical embedding in $\P^7$. 
By \cite[Example 7.2]{kawamata2} $X$ has a Kawamata type decomposition
\[
\Db(X)\simeq\langle\AA ,\Db(R)\rangle\simeq\langle\OO_X(C_1),\ldots ,\OO_X(C_5),\Db(R)\rangle,
\]
where the $\OO_X(C_i)$'s are line bundles on $X$ which are push-forwards of line bundles from $\wt{X}$, and $R$ is again the algebra $(\ref{eqn:burban-kawamata-algebra})$.
\end{example}

\begin{remark}
Let us contemplate here on the fact that the algebra occurring in Examples $\ref{example:Kawamata}$ (1) and (2) coincides with the algebra which shows up in the union $C$ of 2 rational curves intersecting at a node (Example $\ref{example:burban-example}$). This observation is related to Kn\"orrer periodicity. More concretely, the singularity category of $C$ will agree with the singularity category of Kawamata's examples via Kn\"orrer periodicity (Theorem $\ref{Thm:Knorrer}$). On the other hand, Kn\"orrer periodicity can be realized via a blow up construction (see \cite[Proof of Proposition 1.30]{pavic-shinder} and Remark \ref{rmk:blowup}) 
which provides an explicit link between Kawamata type decompositions of ordinary double points with the same parity. 

More generally, this viewpoint of blowing up projective Gorenstein schemes at locally complete intersection subschemes will provide further examples of Kawamata type semiorthogonal
decomposition, as shown in Section
\ref{sec:blowup}.
\end{remark}

In the next two examples
we consider typical singular threefolds:
hypersurfaces, double covers and contractions of blow ups.

\begin{example}\label{ex:hypers}
Nodal hypersurfaces in $\P^4$ of degree $d\geq 3$ and nodal double covers of $\P^3$ branched
in a surface of degree at least four have 
no Kawamata type decomposition by Example $\ref{example:hypersurface}$ and Corollary $\ref{cor:K_{-1}-obstruction}$.

This generalizes an example of Kawamata \cite[Example 7.8]{kawamata2},
constructed as follows. 
One considers a cubic threefold with two nodes $p, q \in X_0$;
it is well-known that such cubics are always factorial. 
Let $X$ be the blow up of $q$.
In the commutative square from \eqref{eqn:main-exact-sequence}
\[\xymatrix{
\Cl(X) \ar[d] \ar[r] & \Cl(\wh{\OO}_{X, p}) \ar[d] 
\\
\Cl(X_0)  \ar[r] & \Cl(\wh{\OO}_{X_0, p}) \oplus \Cl(\wh{\OO}_{X_0, q})\\ 
}\]
the bottom horizontal map is zero, the right vertical map is an embedding of a direct
summand,
hence the top horizontal map is also zero, so that
$X$ is factorial as well.
From \eqref{eqn:main-exact-sequence}
we deduce that $\Kt_{-1}(X)=\Z$, so that $X$
has no Kawamata type decomposition 
by Corollary \ref{cor:K_{-1}-obstruction}.
\end{example}

\begin{example}\label{example:del-Pezzo-3folds-KSOD}
Del Pezzo threefolds as in Example $\ref{example:del-Pezzo-3folds}$ of degree $1 \le d \le 4$ have no Kawamata type decomposition. This follows from Corollary $\ref{cor:K_{-1}-obstruction}$. 

This relates to a question of Kawamata about derived categories of blow ups of $\P^3$ in more than $2$ points \cite[Remark 7.5]{kawamata2}. We have shown that the half-anticanonical contraction of a blow up of $\P^3$ in $4$ or more points has no Kawamata type decomposition. The remaining case, that is the nodal del Pezzo threefold of rank $5$, seems to be the most interesting one, as we cannot detect obstructions with our methods. The following table gives a summary:
\begin{align}\label{table}
 \begin{tabular}{| c | c | c | c | c | c |}
\hline
 $d(X)$ & $|\Sing(X)|$ & $\rk\, \Pic(X)$ & $\rk\, \Cl(X)$ & $\rk\,\Kt_{-1}(X)$ & Kawamata decomp.\\ 
\hline
 1    &   28   &   1   &   8   &   21   &   No\\ 
\hline
 2    &   16   &   1   &   7   &   10   &   No\\ 
\hline
 3    &   10   &   1   &   6   &   5    &   No\\  
\hline
 4    &   6    &   1   &   5   &   2    &   No\\  
\hline
 5    &   3    &   1   &   4   &   0    &   ?\\
\hline
 6    &   1    &   2   &   3   &   0    &   Yes\\
\hline
\end{tabular}   
\end{align}

Here the $d=6$ case refers to Example $\ref{example:Kawamata}$ 2).
\end{example}

\section{Kawamata decompositions, $\Kt_{-1}$ and blow ups}\label{sec:blowup}

We start with the following well-known result.

\begin{theorem}[Thomason, Orlov]\label{thm:blowup}
Let $X$ be a Gorenstein projective variety
and $Z \subset X$ a locally complete
intersection closed subvariety
of pure codimension $c$.
Let $\pi : \wt X\to X$ be the blow up
of $X$ with center $Z$.
Then 
there is an admissible semiorthogonal
decomposition 
\[
\Db(\wt{X}) = \langle 
\underbrace{\Db(Z), \dots, \Db(Z)}_{c-1}, 
\Db(X) \rangle
\]
and for all $j \in \Z$ we have
\[\Kt_j(\wt{X}) \simeq \Kt_j(X) \oplus \Kt_j(Z)^{\oplus(c-1)}.\]
\end{theorem}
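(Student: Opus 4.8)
The plan is to obtain the K-theory formula as a formal consequence of a semiorthogonal decomposition at the level of perfect complexes, so the first task is to establish the stated decomposition of $\Db(\wt X)$ and then descend it to $\Dperf$. For the decomposition I would follow Orlov's blow-up formula. Write $j : E \hookrightarrow \wt X$ for the exceptional divisor and $p : E \to Z$ for its structure map; since $Z \subset X$ is a regular embedding of codimension $c$, the normal sheaf $\NN_{Z/X}$ is locally free of rank $c$ and $E \simeq \P_Z(\NN_{Z/X})$ is a $\P^{c-1}$-bundle over $Z$. The relevant fully faithful functors are $\pi^* : \Db(X) \to \Db(\wt X)$ together with $\Psi_i : \Db(Z) \to \Db(\wt X)$, $\Psi_i(F) = j_*\big(p^* F \otimes \OO_E(i)\big)$ for $1 \le i \le c-1$, where $\OO_E(1)$ is the relative hyperplane bundle. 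Full faithfulness of $\pi^*$ rests on $\pi_* \OO_{\wt X} \simeq \OO_X$, which holds for blow-ups of lci centers, while full faithfulness of the $\Psi_i$ and the semiorthogonality relations reduce, by base change along the projective bundle $p$ and the standard computation of $\pi_* \OO_{\wt X}(-iE)$, to Grothendieck's projective-bundle theorem on $E$. In the smooth case this is exactly Orlov's theorem, and the argument is insensitive to the singularities of $X$ as long as $Z$ is regularly embedded, which is our hypothesis. Admissibility of each component follows because $\pi^*$ and the $\Psi_i$ admit both adjoints, given again by explicit kernel and twist formulas.

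To pass to K-theory I would observe that each functor above preserves perfect complexes: $\pi^*$ does so automatically, and for $\Psi_i$ one uses that $\OO_E(i)$ is a line bundle and that $j_* p^*(-)$ of a perfect complex is perfect precisely because $Z$, and hence $E \subset \wt X$, is lci. Consequently the decomposition restricts to an admissible semiorthogonal decomposition
\[
\Dperf(\wt X) = \langle \underbrace{\Dperf(Z), \dots, \Dperf(Z)}_{c-1}, \Dperf(X) \rangle.
\]
Applying additivity of nonconnective K-theory under semiorthogonal decompositions — a property of the Thomason--Trobaugh K-theory of the associated dg enhancements, valid for all $j \in \Z$ — to this decomposition yields
\[
\Kt_j(\wt X) \simeq \Kt_j(X) \oplus \Kt_j(Z)^{\oplus(c-1)}
\]
for every $j$. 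Alternatively one can invoke the blow-up formula of Thomason--Trobaugh for the K-theory of perfect complexes directly.

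The main obstacle is precisely this descent to perfect complexes: the categorical decomposition naturally lives on $\Db$, whereas the K-theory statement concerns $\Kt = \Kt(\Dperf)$, so one must verify that the fully faithful functors restrict to equivalences of $\Dperf(Z)$ and $\Dperf(X)$ onto admissible subcategories of $\Dperf(\wt X)$. This is where the lci hypothesis on $Z$ is essential, since it guarantees the perfectness of the structure sheaves and twists entering the functors $\Psi_i$. Once this is in place, the additivity of K-theory — including its negative part, which requires the nonconnective delooping of Thomason--Trobaugh rather than merely the additivity theorem in nonnegative degrees — delivers the direct sum decomposition with no further computation.
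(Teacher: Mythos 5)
Your proposal is correct and follows essentially the same route as the paper: Orlov's blow-up semiorthogonal decomposition, adapted to the singular base by using that the lci hypothesis makes all the relevant morphisms ($Z\subset X$, $E\subset\wt X$, $p:E\to Z$, $\pi:\wt X\to X$) proper of finite Tor dimension so the functors are well defined, and then the $\Kt$-theoretic statement. For the latter the paper simply flips your emphasis: it cites Thomason's blow-up theorem directly as the main argument and mentions the descent to (dg-enhanced) perfect complexes plus Schlichting's additivity machinery --- your primary route --- as the alternative.
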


\begin{proof}
The semiorthogonal decomposition is proved in the same way
as \cite{orlov-monoidal} (see also \cite[Theorem 6.9]{bergh-schnurer} or
\cite[Corollay 3.4]{jiang-leung}).
Let us give just a few words on the well-definedness of the functors involved.


Indeed, note that all the morphisms $Z\subset X$, $E\subset\wt{X}$, where $E$ is the exceptional locus of $\pi$, $p: E\to Z$, and $\pi :\wt{X}\to X$ are proper
of finite Tor dimension. This is because the first two morphisms are regular embeddings and the morphism $p: E\to Z$ is a projective bundle. For the blow up $\pi:\wt{X}\to X$, we can write it locally as a composition $\wt{U}\to\P(\EE)\to U$ of a regular embedding and a projective bundle, where $\EE$ is a vector bundle on $U \subset X$ such that the zero locus of global section $0\neq s\in H^0(\EE^{\vee})$ coincides with $Z$. Thus the pushforward and pull-back functors of these morphisms on $\Db$ are well-defined.

Finally the decomposition for $\Kt$-theory is proved
by Thomason \cite[Theorem 2.1]{thomason-blowup},
and it also can be deduced from the semiorthogonal decomposition
lifted to dg-enhancements of the relevant categories,
and applying Schlichting's machinery \cite{schlichting2, schlichting}.
\end{proof}

\begin{remark}\label{rmk:blowup}
In this paper we mostly deal with isolated singularities,
however the
blow up of a smooth variety in a center with isolated singularties
does not necessarily have isolated singularities. For example, if we blow up $\A^3$ at the thick point given by the ideal $(x,y,z^2)$, then one can see that the singular locus of the blow up is 1-dimensional.

A local computation shows however that, if $Z\subset X$ is a locally complete intersection of codimension $2$ in a smooth variety $X$
and such that $Z$ has at most isolated hypersurface
singularities 
given complete
locally by an ideal
$\big(f(x_1,\ldots ,x_{n-1}), x_{n}\big)
\subset k[[x_1, \dots, x_{n}]]$, 
then the blow up $\wt{X}\to X$ along $Z$ has at most isolated hypersurface singularities given by the ideal
$\big(x_{n} \cdot x_{n+1}+
f(x_1,\ldots , x_{n-1})\big) 
\subset k[[x_1, \dots, x_{n+1}]]$.
\end{remark}

\begin{corollary}\label{cor:blowup-ksod}
Under the conditions of Theorem \ref{thm:blowup} if both $X$ and $Z$
admit Kawamata decompositions, so does
$\wt{X}$.
\end{corollary}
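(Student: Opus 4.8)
The plan is to substitute the two given Kawamata decompositions into the blow-up decomposition of Theorem~\ref{thm:blowup} and then rearrange. First note that $\wt X$ is again Gorenstein (the Gorenstein property is preserved under blow ups with locally complete intersection centers) and projective, so it makes sense to ask for a Kawamata decomposition. Write the hypotheses as
\[
\Db(X) = \langle \AA_X, \Db(S_1), \dots, \Db(S_p)\rangle, \qquad
\Db(Z) = \langle \AA_Z, \Db(T_1), \dots, \Db(T_q)\rangle,
\]
with $\AA_X \subset \Dperf(X)$, $\AA_Z \subset \Dperf(Z)$ and all $S_i$, $T_j$ finite-dimensional. Refining
\[
\Db(\wt X) = \langle \underbrace{\Db(Z), \dots, \Db(Z)}_{c-1}, \Db(X)\rangle
\]
by replacing each block with its decomposition produces a semiorthogonal decomposition of $\Db(\wt X)$ whose components are the images, under the fully faithful embedding functors of Theorem~\ref{thm:blowup}, of the various $\AA_Z$, $\Db(T_j)$, $\AA_X$ and $\Db(S_i)$.

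Next I would check that this refined decomposition is of the type covered by Remark~\ref{rmk:rearrangement}: each component is either contained in $\Dperf(\wt X)$ or equivalent to $\Db$ of a finite-dimensional algebra, and the whole decomposition is admissible. The $\Db(X)$-block is embedded by $\pi^*$, which preserves perfect complexes, so $\pi^*(\AA_X) \subset \Dperf(\wt X)$ while $\pi^*(\Db(S_i)) \simeq \Db(S_i)$. Each $\Db(Z)$-block is embedded by a functor of the shape $j_*(p^*(-) \otimes \OO_E(k))$, where $E$ is the exceptional divisor with projection $p \colon E \to Z$ and inclusion $j \colon E \to \wt X$; since $p^*$ and the line-bundle twist preserve perfection and $j$ is a regular embedding (the exceptional divisor is Cartier, hence proper of finite Tor dimension as already used in the proof of Theorem~\ref{thm:blowup}), the functor $j_*$ sends perfect complexes to perfect complexes. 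Thus these functors carry $\AA_Z$ into $\Dperf(\wt X)$ and each $\Db(T_j)$ to a category equivalent to $\Db(T_j)$. Admissibility follows by composing adjoints: each component is admissible inside its own $\Db(Z)$- or $\Db(X)$-block because the Kawamata decompositions are admissible, and each block is admissible in $\Db(\wt X)$ by Theorem~\ref{thm:blowup}, so the inclusion of each component into $\Db(\wt X)$ acquires both a left and a right adjoint.

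Finally, the refined decomposition interleaves perfect and algebra components, so I would invoke Remark~\ref{rmk:rearrangement} to mutate it into the standard form $\langle \AA, \BB_1, \dots, \BB_m\rangle$. Concretely, moving every algebra component to the right past the perfect ones by right mutations leaves the perfect components untouched (at each elementary step only the algebra member of an adjacent pair is mutated) and keeps the algebra components equivalent to $\Db$ of finite-dimensional algebras, since mutation functors are equivalences; collecting the resulting perfect components into a single subcategory $\AA$, which lies in $\Dperf(\wt X)$ as it is generated by perfect subcategories, yields the required Kawamata decomposition of $\wt X$. The step demanding the most care is precisely the bookkeeping of these two component types through the construction: that $j_*$ preserves perfection so the exceptional $\AA_Z$-pieces land in $\Dperf(\wt X)$, and that the rearrangement is carried out in the direction that never mutates the perfect components; both points are settled above.
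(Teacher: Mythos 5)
Your proposal is correct and takes essentially the same route as the paper: the paper's proof likewise refines the blow-up decomposition of Theorem \ref{thm:blowup} by the two given Kawamata decompositions, notes that all resulting components are admissible in $\Db(\wt{X})$, and invokes Remark \ref{rmk:rearrangement} to mutate into the form of Definition \ref{def:KSOD}. You simply spell out details the paper leaves implicit, namely that the embedding functors $\pi^*$ and $j_*(p^*(-)\otimes\OO_E(k))$ preserve perfection and that admissibility passes through composed adjoints, all of which is sound.
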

\begin{proof} While all the components are admissible in $\Db(\wt{X})$, we can use Remark $\ref{rmk:rearrangement}$ to rearrange them to obtain the form as in Definition $\ref{def:KSOD}$.
\end{proof}

\begin{corollary}\label{cor:nodal-blowup-characterisation}
Let $X$ be a smooth projective threefold and let $C\subset X$ be a disjoint union of nodal curves such that each irreducible components of $C$ is a rational curve. 
Then $\wt{X}$ admits a Kawamata type semiorthogonal decomposition if and only if $C$ is a disjoint union of nodal trees of smooth rational curves.
\end{corollary}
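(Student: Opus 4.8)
The plan is to reduce both implications to the one-dimensional case through the two blow-up formulas recorded in Theorem \ref{thm:blowup}, and then to invoke the characterisation of nodal rational curves from Corollary \ref{cor:nodal-curve-characterisation}. The crucial preliminary point is that a reduced curve $C\subset X$ with at worst nodal singularities is a locally complete intersection of pure codimension two in the smooth threefold $X$, so that Theorem \ref{thm:blowup} is applicable to the blow up $\pi\colon\wt X\to X$ along $C$. Away from the nodes this is the standard fact that a smooth curve in a smooth threefold is a regular embedding; at a node one straightens one branch to a coordinate axis $\{y=z=0\}$, writes the second branch as $\{x=f(y),\,z=g(y)\}$ with $f'(0)=g'(0)=0$, and checks by a direct local computation that the completed ideal is $(z-g(y),\,y(x-f(y)))$, a regular sequence of length two. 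Since $X$ is smooth, hence Gorenstein, and $C$ is lci, the blow up $\wt X$ is again Gorenstein and projective, so the machinery of Sections \ref{sec:K-theory} and \ref{sec:Kawamata} applies to it.

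For the \emph{if} direction, suppose $C$ is a disjoint union of nodal trees of smooth rational curves. Theorem \ref{thm:blowup} with $c=2$ gives an admissible semiorthogonal decomposition $\Db(\wt X)=\langle \Db(C),\Db(X)\rangle$. As $X$ is smooth, $\Db(X)=\Dperf(X)$ carries the trivial Kawamata type decomposition, while $\Db(C)$ carries one by Burban's Theorem \ref{thm:burban} (equivalently, by the implication $1)\Rightarrow 2)$ of Corollary \ref{cor:nodal-curve-characterisation} applied to each connected component). Corollary \ref{cor:blowup-ksod} then assembles these into a Kawamata type decomposition of $\wt X$.

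For the \emph{only if} direction, suppose $\wt X$ admits a Kawamata type decomposition. By Corollary \ref{cor:K_{-1}-obstruction} this forces $\Kt_{-1}(\wt X)=0$. The $\Kt$-theoretic blow-up formula of Theorem \ref{thm:blowup} in degree $j=-1$ reads $\Kt_{-1}(\wt X)\simeq\Kt_{-1}(X)\oplus\Kt_{-1}(C)$; since negative $\Kt$-theory of the smooth variety $X$ vanishes, we obtain $\Kt_{-1}(C)\simeq\Kt_{-1}(\wt X)=0$. By additivity of $\Kt_{-1}$ over the connected components of $C$ together with the implication $3)\Rightarrow 1)$ of Corollary \ref{cor:nodal-curve-characterisation}, this is equivalent to each component of $C$ being a nodal tree of smooth rational curves; alternatively one reads this off directly from Corollary \ref{cor:nodal}, which identifies $\Kt_{-1}(C)\simeq\Z^{\lambda}$ with $\lambda$ the first Betti number of the dual graph of $C$, so that $\lambda=0$ rules out both self-loops (nodal rational components) and cycles (nontree components).

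The only genuinely non-formal ingredient is the local complete intersection claim for $C$, since the applicability of Theorem \ref{thm:blowup} to $\wt X$ in both directions rests on it; everything else is a bookkeeping assembly of results already established. The key local input making the straightening argument work is precisely the defining property of a node, namely that the two branches have distinct tangent directions, which lets one realise the ideal with two generators. As a consistency check, Remark \ref{rmk:blowup} shows that each such node produces an ordinary double point $x_1x_2+x_3x_4=0$ on $\wt X$, matching the nodal threefold setting, although this is not needed for the argument.
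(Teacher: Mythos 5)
Your proof is correct and follows essentially the same route as the paper: Theorem \ref{thm:blowup} combined with Corollary \ref{cor:blowup-ksod} and Burban's Theorem \ref{thm:burban} for the ``if'' direction, and Corollary \ref{cor:K_{-1}-obstruction} together with the $\Kt$-theoretic blow-up formula and Corollary \ref{cor:nodal-curve-characterisation} (equivalently Corollary \ref{cor:nodal}) for the ``only if'' direction. Your explicit local verification that a nodal curve in a smooth threefold is a codimension-two locally complete intersection, and your reduction to connected components, are details the paper leaves implicit, but they do not change the argument.
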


\begin{proof}
If $C$ is a disjoint union of nodal trees of smooth rational curves,
then the blow up has 
a Kawamata type decomposition
by Corollary $\ref{cor:blowup-ksod}$ and Theorem \ref{thm:burban}.

Conversely, if the blow up admits a Kawamata type decomposition,
then $\Kt_{-1}(\wt{X}) = 0$ by Corollary \ref{cor:K_{-1}-obstruction}
hence $\Kt_{-1}(C) = 0$ by
Theorem \ref{thm:blowup}
and finally $C$ is a nodal tree by 
Corollary \ref{cor:nodal-curve-characterisation}.
\end{proof}

\begin{example}\label{example:nodal-chain-blowup}
If $X$ is a smooth projective threefold,
and $C$ is a disjoint union of nodal trees
of projective lines, then
the blow up $\wt{X} = \Bl_C(X)$ is a threefold with ordinary double
points (see Remark $\ref{rmk:blowup}$) 
and by Corollary \ref{cor:nodal-blowup-characterisation}
it admits a Kawamata type semiorthogonal
decomposition. 

On the other hand, if $C$ is nodal and irreducible (of arbitrary genus), 
then the blow up
of $X$ in $C$ does not have a Kawamata type decomposition
by Corollary \ref{cor:K_{-1}-obstruction}
since $\Kt_{-1}(\wt{X}) = \Kt_{-1}(C) \ne 0$ where
we used Theorem \ref{thm:blowup} and 
Corollary \ref{cor:nodal}.
\end{example}

\providecommand{\arxiv}[1]{{\tt{arXiv:#1}}}

\end{document}